\setlist[enumerate,1]{label=(\roman*)}
\numberwithin{equation}{section}
\crefname{subsection}{subsection}{subsections}
\def\tkzDrawEllipse{\pgfutil@ifnextchar[{\tkz@DrawEllipse}{\tkz@DrawEllipse[]}}
\def\tkz@DrawEllipse[#1](#2,#3,#4,#5){%
\begingroup
\draw[#1](#2) ellipse [x radius=#3, y radius=#4,rotate=#5];
\endgroup
}
\declaretheoremstyle[
  shaded={bgcolor=\thmcolor}
]{plain}
\declaretheoremstyle[
  headfont=\normalfont\bfseries,
  bodyfont=\normalfont,
  shaded={bgcolor=\defcolor}
]{noital}
\declaretheoremstyle[
  headfont=\normalfont\bfseries,
  bodyfont=\normalfont,
]{noital}
\declaretheorem[style=plain,numberwithin=section,name=Theorem]{theorem}
\declaretheorem[style=plain,sibling=theorem,name=Proposition]{proposition}
\declaretheorem[style=plain,sibling=theorem,name=Lemma]{lemma}
\declaretheorem[style=plain,sibling=theorem,name=Corollary]{corollary}
\declaretheorem[style=plain,sibling=theorem,name=Conjecture]{conjecture}
\declaretheorem[style=plain,sibling=theorem,name=Claim]{claim}
\declaretheorem[style=plain,sibling=theorem,name=Question]{question}
\declaretheorem[style=plain,sibling=theorem,name=Problem]{problem}
\declaretheorem[style=noital,sibling=theorem,name=Remark]{remark}
\newcommand{\defined}{\mathrel{\coloneqq}}
\newcommand{\defines}{\mathrel{\eqqcolon}}
\DeclarePairedDelimiter{\p}{\lparen}{\rparen}
\newcommand{\st}{\mathbin{\colon}}
\DeclarePairedDelimiter{\set}{\lbrace}{\rbrace}
\newcommand{\emptyset}{\varnothing}
\DeclarePairedDelimiter{\card}{\lvert}{\rvert}
\newcommand{\from}{\colon}
\DeclarePairedDelimiter{\floor}{\lfloor}{\rfloor}
\DeclarePairedDelimiter{\ceil}{\lceil}{\rceil}
\DeclareMathOperator{\loglog}{log\,log}
\newcommand{\mod}[1]{\ (\mathrm{mod}\ #1)}
\DeclarePairedDelimiterX{\abs}[1]
  {\lvert}{\rvert}{\ifblank{#1}{\,\cdot\,}{#1}}
\DeclarePairedDelimiterX{\norm}[1]
  {\lVert}{\rVert}{\ifblank{#1}{\,\cdot\,}{#1}}
\DeclarePairedDelimiterX{\inner}[2]
  {\langle}{\rangle}{\ifblank{#1}{\,\cdot\,}{#1},\ifblank{#2}{\,\cdot\,}{#2}}
\DeclareMathDelimiter{\given}
  {\mathbin}{symbols}{"6A}{largesymbols}{"0C}
\DeclareMathOperator{\Prob}{\mathbb{P}}
\DeclarePairedDelimiterXPP{\prob}[1]
  {\Prob}{\lparen}{\rparen}{}
  {\renewcommand{\given}{\nonscript\;\delimsize\vert\nonscript\;\mathopen{}}#1}
\DeclareMathOperator{\Expec}{\mathbb{E}}
\DeclarePairedDelimiterXPP{\expec}[1]
  {\Expec}{\lparen}{\rparen}{}
  {\renewcommand{\given}{\nonscript\;\delimsize\vert\nonscript\;\mathopen{}}#1}
\newcommand{\eps}{\varepsilon}
\let\SS\relax
\newcommand{\NN}{\mathbb{N}}
\newcommand{\RR}{\mathbb{R}}
\newcommand{\SS}{\mathbb{S}}
\newcommand{\ZZ}{\mathbb{Z}}
\newcommand{\cA}{\mathcal{A}}
\newcommand{\cE}{\mathcal{E}}
\newcommand{\cN}{\mathcal{N}}
\newcommand{\cP}{\mathcal{P}}
\newcommand{\cQ}{\mathcal{Q}}
\newcommand{\cS}{\mathcal{S}}
\def\cprime{\/{\mathsurround=0pt$'$}}
\DeclarePairedDelimiterX{\normstar}[1]
  {\lVert}{\rVert^*}{\ifblank{#1}{\,\cdot\,}{#1}}
\DeclarePairedDelimiterX{\normdag}[1]
  {\lVert}{\rVert^\dagger}{\ifblank{#1}{\,\cdot\,}{#1}}
\renewcommand{\d}{\mathop{}\!\mathrm{d}}
\newcommand{\dx}{\d x}
\newcommand{\dt}{\d t}
\newcommand{\dtheta}{\d \theta}
\begin{document}

\title{Double-jump phase transition for the reverse Littlewood--Offord problem}

\author[L. Hollom]{Lawrence Hollom}
\author[J. Portier]{Julien Portier}
\address{Department of Pure Mathematics and Mathematical Statistics, University of Cambridge, Cambridge, United Kingdom}
\email{lh569@cam.ac.uk}
\email{jp899@cam.ac.uk}

\author[V. Souza]{Victor Souza}
\address{Department of Computer Science and Technology, and Sidney Sussex College, University of Cambridge, Cambridge, United Kingdom}
\email{vss28@cam.ac.uk}

\begin{abstract}
Erdős conjectured in 1945 that for any unit vectors $v_1, \dotsc, v_n$ in $\mathbb{R}^2$ and signs $\varepsilon_1, \dotsc, \varepsilon_n$ taken independently and uniformly in $\{-1,1\}$, the random Rademacher sum $\sigma = \varepsilon_1 v_1 + \dotsb + \varepsilon_n v_n$ satisfies $\|\sigma\|_2 \leq 1$  with probability $\Omega(1/n)$.
While this conjecture is false for even $n$, Beck has proved that $\|\sigma\|_2 \leq \sqrt{2}$ always holds with probability $\Omega(1/n)$.
Recently, He, Ju\v{s}kevi\v{c}ius, Narayanan, and Spiro conjectured that the Erdős' conjecture holds when $n$ is odd.
We disprove this conjecture by exhibiting vectors $v_1, \dotsc, v_n$ for which $\|\sigma\|_2 \leq 1$ occurs with probability $O(1/n^{3/2})$.
On the other hand, an approximated version of their conjecture holds: we show that we always have $\|\sigma\|_2 \leq 1 + \delta$ with probability $\Omega_\delta(1/n)$, for all $\delta > 0$.
This shows that when $n$ is odd, the minimum probability that $\|\sigma\|_2 \leq r$
exhibits a double-jump phase transition at $r = 1$, as we can also show that $\|\sigma\|_2 \leq 1$ occurs with probability at least $\Omega((1/2+\mu)^n)$ for some $\mu > 0$.
Additionally, and using a different construction, we give a negative answer to a question of Beck and two other questions of He, Ju\v{s}kevi\v{c}ius, Narayanan, and Spiro, concerning the optimal constructions minimising the probability that $\|\sigma\|_2 \leq \sqrt{2}$.
We also make some progress on the higher dimensional versions of these questions.
\end{abstract}

\maketitle


\section{Introduction}
\label{sec:intro}

In their seminal work of 1943, Littlewood and Offord~\cite{Littlewood1943-ax} examined signed sums of complex numbers with unit norm and, in particular, the probability that these sums lie within an open ball of unit radius.
This research laid the groundwork for what is now known as Littlewood–Offord theory, which is broadly concerned with bounds on the probability that the random signed sum $\eps_1 v_1 + \dotsb + \eps_n v_n$ falls within a target set $S$, where $v_1,\dotsc,v_n$ are fixed vectors and $\eps_i$ are independent Rademacher random variables, that is, $\eps_i$ are uniformly distributed on $\set{-1, +1}$.

Littlewood and Offord, motived by the problem of estimating the number of zeros of random polynomials, considered the case where each $v_i$ is a complex number with norm at least $1$, showing that the probability that $\eps_1 v_1 + \dotsb + \eps_n v_n$ lies within any open ball of radius $1$ is at most $O(n^{-1/2} \log n)$.
While this result was sufficient for their purposes, the best possible result was found in 1945 by Erdős~\cite{Erdos1945-fu}, who used Sperner's theorem to show that the probability is at most $\binom{n}{\floor{n/2}}2^{-n}$, attained when $v_1 = \dotsb = v_n = 1$.

In his influential paper of 1945, Erdős~\cite{Erdos1945-fu} posed two conjectures.
The first of these asked for a generalisation of the problem of Littlewood and Offord to an arbitrary Hilbert space, and was resolved by Kleitman~\cite{Kleitman1970-os}.
Erdős' second conjecture is the following.

\begin{conjecture}[Erdős]
\label{conj:erdos}
Let $x_1, \dotsc, x_n$ be unit complex numbers.
Then the number of sums $\sum_{i=1}^n \eps_i x_i$ with $\eps_i \in \set{-1, +1}$ and $\abs[\big]{\sum_{i=1}^n \eps_i x_i} \leq 1$ is greater than $c 2^n / n$ for some absolute constant $c > 0$.
\end{conjecture}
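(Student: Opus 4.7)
Identifying $\CC$ with $\RR^2$ and writing $x_j = e^{i\theta_j}$, my plan is to analyse $\sigma = \sum_{j=1}^n \eps_j x_j$ via the geometry of the multiset $\set{x_1, \dotsc, x_n}$. The natural parameter is the symmetric matrix $M \defined \sum_{j=1}^n x_j x_j^\top \in \RR^{2 \times 2}$, which is the covariance of $\sigma$ and satisfies $\operatorname{tr} M = n$. Let $\lambda_1 \geq \lambda_2 \geq 0$ be its eigenvalues. I would fix a small constant $\alpha > 0$ and distinguish the \emph{spread} regime $\lambda_2 \geq \alpha n$ from the \emph{collinear} regime $\lambda_2 < \alpha n$, proving $\prob{\norm{\sigma}_2 \leq 1} \geq c/n$ separately in each.

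\textbf{Spread regime via a local CLT.} When $\lambda_2 \geq \alpha n$, both eigenvalues of $M$ are of order $n$, and a local central limit theorem suggests that $\sigma$ has density of order $1/\sqrt{\det M} = \Theta(1/n)$ near the origin, yielding $\prob{\norm{\sigma}_2 \leq 1} \geq c/n$ after integration over the unit disc. To make this rigorous I would use Fourier inversion: the characteristic function of $\sigma$ is $\phi_\sigma(t) = \prod_{j=1}^n \cos(\inner{t}{x_j})$, so
\begin{equation*}
\prob{\norm{\sigma}_2 \leq 1} = \frac{1}{(2\pi)^2} \int_{\RR^2} \widehat{\mathbf{1}_{B(0,1)}}(t) \, \phi_\sigma(t) \, \d t,
\end{equation*}
and one splits this integral into a near-origin region, where $\phi_\sigma(t) \approx \exp(-\tfrac{1}{2} \inner{t}{Mt})$ contributes the main term of order $1/n$, and a tail region, where $\abs{\phi_\sigma(t)}$ is exponentially small in $n$ since the spread hypothesis forces many factors $\cos(\inner{t}{x_j})$ to be uniformly bounded away from $\pm 1$.

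\textbf{Collinear regime.} When $\lambda_2 < \alpha n$, there is a direction $u \in \SS^1$ with $\sum_j \inner{x_j}{u}^2 \geq (1 - \alpha) n$, so all but a vanishing proportion of the $x_j$ lie in a thin slab around the line $\RR u$. I would then condition on the signs of the few vectors far from $\RR u$, reducing the problem to controlling the projection of $\sigma$ along $u$. That projection is essentially a one-dimensional Rademacher sum of $\Omega(n)$ real numbers of modulus close to $1$, and Erdős' classical bound via Sperner's theorem gives concentration probability $\Omega(1/\sqrt{n})$, which is far stronger than the required $\Omega(1/n)$, leaving ample slack to absorb the perpendicular component by a pigeonhole argument over the boundedly many outlier signs.

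\textbf{Main obstacle.} The principal difficulty is the sharpness of the radius. Beck's theorem already gives $\Omega(1/n)$ for radius $\sqrt{2}$, and the loss comes from the spread regime: the LCLT error terms are harmless when integrated against the indicator of a disc slightly larger than the target, but controlling them against the indicator of a disc of radius exactly $1$ seems to require a sharp second-order Edgeworth-type correction that does not simply fall out of the Fourier splitting above. More seriously, the abstract of the present paper asserts that \Cref{conj:erdos} is in fact \emph{false} for even $n$, so any successful proof must use the parity of $n$ in an essential way. The parity dependence presumably enters through lattice effects: for suitably arithmetic even-$n$ configurations, $\sigma$ is forced to lie in a discrete sublattice of $\RR^2$ whose minimum nonzero vector exceeds $1$ in length, killing the heuristic $\Omega(1/n)$ bound that the LCLT above would suggest. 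Locating—or ruling out—such a parity-sensitive step in both regimes is where I expect the real work to lie.
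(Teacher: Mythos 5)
The statement you are trying to prove is labelled a \emph{conjecture} in the paper, and the paper's entire point is that it is \emph{false}: there is no proof to compare yours against, only disproofs. For even $n$ the classical counterexample (attributed to Erdős, Sárközy and Szemerédi) takes an odd number of copies of $(1,0)$ and an odd number of copies of $(0,1)$; then both coordinates of every signed sum are odd integers, so $\abs{\sum_i \eps_i x_i} \geq \sqrt{2}$ always and the count is $0$. For odd $n$, \Cref{thm:radius1} (proved via the construction in \Cref{thm:construction-small-radius} with $d=2$: roughly $n/3$ copies each of $(\cos\beta,\sin\beta)$, $(\cos\beta,-\sin\beta)$ and $(0,1)$ with carefully chosen parities) gives unit vectors with $\prob{\norm{\eps_1 v_1+\dotsb+\eps_n v_n}_2 \leq 1} = O(n^{-3/2})$, so the conjectured $\Omega(1/n)$ bound fails for both parities. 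The true positive results at or near radius $1$ are \Cref{thm:approximate} (radius $1+\delta$, odd $n$) and \Cref{thm:lower-bound} (an exponential, not polynomial, lower bound at radius exactly $1$).

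Concretely, your spread-regime step is where the argument breaks, and the even-$n$ counterexample above shows it cannot be repaired: with $n/2$ copies each of $(1,0)$ and $(0,1)$ the covariance matrix is $M = (n/2)I$, maximally spread, yet the probability is exactly $0$. The failure is in the claim that the spread hypothesis makes $\abs{\phi_\sigma(t)}$ exponentially small away from the origin: here $\phi_\sigma(t) = \cos(t_1)^{n/2}\cos(t_2)^{n/2}$ has modulus $1$ at $(\pi,0)$, $(0,\pi)$ and $(\pi,\pi)$, and these dual-lattice contributions exactly cancel the Gaussian main term inside the unit disc. Spreadness of $M$ controls only the second moments, not the arithmetic of the configuration, and it is precisely this lattice obstruction (which you correctly flag as a worry at the end, but only for even $n$) that the paper exploits — in a more delicate, perturbed form — to kill the bound for odd $n$ as well. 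Your collinear-regime sketch is also not sound as stated: a one-dimensional Sperner bound of order $n^{-1/2}$ concerns concentration of the projection on some interval of length $2$, not on an interval positioned to make the full two-dimensional sum land in the unit disc, and the perpendicular component contributed by $\Theta(\alpha n)$ slightly-off-axis vectors need not be absorbable.
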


Questions of this kind have been recently termed `reverse' Littlewood--Offord problems, as the goal is to show a lower bound on the number of signed sum $\eps_1 v_1 + \dotsb + \eps_n v_n$ that lie in a specified set, rather than an upper bound.

It turns out that \Cref{conj:erdos} is false as stated, which can be seen in $\RR^2$ by taking an odd number of copies of $(1,0)$ and of $(0,1)$.
Indeed, this forces all the sums to have norm at least $\sqrt{2}$.
This observation, which is attributed to Erdős, Sárközy, and Szemerédi by Beck~\cite{Beck1983-ef}, was also made by Carnielli and Carolino~\cite{Carnielli2011-mq}.
Both groups conjectured from this example that \Cref{conj:erdos} should hold if the radius $1$ is replaced with $\sqrt{2}$.
This corrected version of the conjecture of Erdős was proven by Beck~\cite{Beck1983-ef} in 1983, who moreover obtained the analogous result in every dimension.

\begin{theorem}[Beck]
\label{thm:beck}
For any $d \geq 1$, there is a constant $c_d >0$ depending only on $d$ such that the following holds.
Let $v_1, \dotsc, v_n$ be vectors in $\RR^d$ with $\norm{v_i}_2 \leq 1$ for each $1 \leq i \leq n$.
If $\eps_1, \dotsc, \eps_n$ are independent Rademacher random variables, then
\begin{align*}
    \prob[\big]{ \norm{\eps_1 v_1 + \cdots + \eps_n v_n}_2 \leq \sqrt{d} } \geq \frac{c_d}{n^{d/2}}.
\end{align*}
\end{theorem}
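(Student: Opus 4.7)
The plan is to prove \Cref{thm:beck} by a Fourier-analytic argument centred on the characteristic function
\[
\varphi(\xi) = \expec{e^{i\inner{\xi}{\sigma}}} = \prod_{i=1}^n \cos\p{\inner{\xi}{v_i}},
\]
where $\sigma = \eps_1 v_1 + \dotsb + \eps_n v_n$. The idea is to pick a non-negative test function $h \colon \RR^d \to [0,\infty)$ supported in the ball $B(0,\sqrt{d})$ with $\norm{h}_\infty \leq 1$ and whose Fourier transform is also non-negative; a convenient choice is $h = \psi \ast \psi$ for a smooth radial bump $\psi$ supported in $B(0,\sqrt{d}/2)$, so that $\hat h = \abs{\hat\psi}^2 \geq 0$. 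By Parseval, $\expec{h(\sigma)} = (2\pi)^{-d} \int \hat h(\xi) \varphi(\xi) \d\xi$ and this expectation bounds $\prob{\norm{\sigma}_2 \leq \sqrt{d}}$ from below, so it suffices to prove $\expec{h(\sigma)} \geq c_d/n^{d/2}$.

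Let $M = \sum_{i=1}^n v_i v_i^\top$, a positive semidefinite matrix with $\operatorname{tr}(M) \leq n$, and write $\lambda_1 \geq \dotsb \geq \lambda_d \geq 0$ for its eigenvalues. The main regime is $\norm{\xi}_2 \lesssim 1/\sqrt{n}$, where $\cos t \geq 1 - t^2/2$ applied termwise gives $\varphi(\xi) \geq \exp\p{-c\,\xi^\top M \xi}$. For larger $\xi$ we use the bound $\abs{\cos t} \leq e^{-t^2/3}$ valid for $\abs t \leq 1$ to obtain $\abs{\varphi(\xi)} \leq \exp(-c\,\xi^\top M \xi)$, and tune the smoothness of $\psi$ so that $\hat h$ decays fast enough to make the tail contribution negligible. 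When all eigenvalues $\lambda_i$ are of order $n$, the Fourier integral is essentially a Gaussian integral of covariance proportional to $M^{-1}$, which a direct computation shows to have value of order $(\det M)^{-1/2} \asymp n^{-d/2}$, as required.

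The main obstacle is the degenerate regime in which $M$ has some eigenvalues much smaller than $n$, corresponding to the vectors $v_i$ clustering near a proper subspace $V \subset \RR^d$ of dimension $k < d$. Here the plan is to induct on $d$: decompose $\sigma = \pi_V \sigma + \pi_{V^\perp} \sigma$, apply the inductive hypothesis inside $V$ to get $\prob{\norm{\pi_V \sigma}_2 \leq \sqrt{k}} \geq c_k n^{-k/2}$, and bound $\norm{\pi_{V^\perp}\sigma}_2 \leq \sqrt{d-k}$ with constant probability by a Chebyshev argument once $V$ is chosen so that $\sum_i \norm{\pi_{V^\perp} v_i}_2^2$ is small. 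The hardest step is making this reduction quantitative in the intermediate case where $M$ has some eigenvalues of order $n$ and others much smaller but not negligible, and controlling the correlation between the events inside $V$ and $V^\perp$, which are not independent since both are driven by the same signs $\eps_i$. A cleaner route that avoids splitting entirely is to handle all regimes simultaneously by matching the smoothing kernel $\hat h$ to the spectrum of $M$ via an anisotropic bump, recasting the proof as one uniform Fourier estimate whose value is always of order $(\det(I + M))^{-1/2}$, which is $\gtrsim n^{-d/2}$ in every case.
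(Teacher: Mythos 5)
You should first note that the paper does not prove this statement at all: it is quoted from Beck's 1983 paper, and what the authors actually use downstream is the pairing lemma of He, Ju\v{s}kevi\v{c}ius, Narayanan, and Spiro (\Cref{prop:hjns}). So the evaluation here is of your argument on its own terms.

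Your Fourier plan has a fatal gap at the step ``for larger $\xi$ we use $\abs{\cos t}\leq e^{-t^2/3}$ \dots to obtain $\abs{\varphi(\xi)}\leq \exp(-c\,\xi^\top M\xi)$''. That cosine inequality holds only for $\abs{t}\leq 1$, and the claimed decay of $\varphi$ is simply false outside a bounded region: $\varphi(\xi)=\prod_i\cos\inner{\xi}{v_i}$ is almost periodic and returns to modulus $1$. Already for $v_1=\dotsb=v_n=e_1$ one has $\varphi(\xi)=\cos^n(\xi_1)=\pm 1$ on $\pi\ZZ\times\RR^{d-1}$, and for $n/d$ copies of each basis vector $e_i$ one has $\abs{\varphi}\geq 1/2$ on a neighbourhood of measure $\asymp n^{-d/2}$ of \emph{every} point of $\pi\ZZ^d$. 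Summing $\hat h$ over these points shows that the ``tail'' of $\int\hat h\,\varphi$ contributes at the \emph{same order} $n^{-d/2}$ as your Gaussian main term, with a sign governed by the parities of the multiplicities; since $\hat h$ is a fixed function while the main term tends to $0$, no choice of smoothness for $\psi$ makes the tail $o(n^{-d/2})$. There is also a decisive sanity check: nothing in your sketch uses the radius $\sqrt{d}$ except as the diameter of the support of $h$, so the identical argument run with $h$ supported in $B(0,\sqrt{d}-\delta)$ would ``prove'' the same lower bound for a smaller radius. That statement is false --- odd multiplicities of the standard basis vectors force every coordinate of $\sigma$ to be an odd integer, so $\prob[\big]{\norm{\sigma}_2\leq r}=0$ for all $r<\sqrt{d}$. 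Hence the oscillatory part of the Fourier integral must exactly cancel the main term in such configurations, and any correct proof has to engage with this lattice obstruction. The degenerate-spectrum discussion, the induction on $d$, and the proposed anisotropic bump do not address this; the known proofs (Beck's, and the pairing argument of \Cref{prop:hjns}) work in physical space precisely so as to confront the arithmetic of which lattice points lie in the ball of radius $\sqrt{d}$.
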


Recently, He, Ju\v{s}kevi\v{c}ius, Narayanan, and Spiro~\cite{He2024-cp} rediscovered this result for $d=2$ with an alternative proof.
While they note that the bound of $\sqrt{d}$ on the radius of the ball in \Cref{thm:beck} is optimal, this is not the end of the story for \Cref{conj:erdos}.
Indeed, when $d = 2$, the example that showed that a radius of $\sqrt{2}$ is required only works when $n$ is even.
Encouraged by the possibility that this is the only obstruction that prevents concentration inside the unit disk, they conjectured~\cite[Conjecture 4.1]{He2024-cp} that the original conjecture of Erdős holds when $n$ is odd.

\begin{conjecture}[He, Ju\v{s}kevi\v{c}ius, Narayanan, and Spiro]
\label{conj:HJNSodd}
There is a constant $c > 0$ such that, for every $n$ odd and unit vectors $v_1, \dotsc, v_n\in \RR^2$, we have
\begin{align*}
    \prob[\big]{ \norm{\eps_1 v_1 + \dotsb + \eps_n v_n}_2 \leq 1 } \geq \frac{c}{n}.
\end{align*}
\end{conjecture}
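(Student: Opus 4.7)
The plan is to adapt Beck's proof of the $\sqrt{2}$-radius bound (\Cref{thm:beck}) and leverage the parity of $n$ to tighten the radius to $1$, via a dichotomy on how genuinely two-dimensional the configuration is. Form the covariance $\Sigma = \sum_i v_i v_i^{\top}$ with eigenvalues $\lambda_1 \ge \lambda_2$, so $\lambda_1 + \lambda_2 = n$, and rotate coordinates to diagonalise $\Sigma$; write $v_i = (a_i, b_i)$ with $\sum a_i^2 = \lambda_1$ and $\sum b_i^2 = \lambda_2$. Fix a small constant $\eta > 0$ to be tuned.

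If $\lambda_2 \ge \eta n$ (genuine 2D regime), both projections of $\sigma$ have variance $\Omega(n)$, so each 1D marginal has Erd\H{o}s--Littlewood--Offord anti-concentration mass $\Omega(1/\sqrt n)$ on any interval of length $2$. I would apply a Fourier/Esseen-type argument to decouple the two coordinate projections and conclude $\prob{\|\sigma\|_2 \le 1} = \Omega_\eta(1/n)$. In this regime the parity of $n$ is not needed --- the bound is driven purely by two-dimensional spread.

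If $\lambda_2 < \eta n$ (near-1D regime), then $Y = \sum \eps_i b_i$ has variance $< \eta n$, and a Paley--Zygmund estimate gives $|Y| \le 1/2$ with probability $\Omega(1)$. Conditioning on this, it suffices to show $|X| = |\sum \eps_i a_i| \le \sqrt{3}/2$ with probability $\Omega(1/n)$. Writing $|a_i| = 1 - r_i$ with $\sum r_i \le \tfrac12 \sum b_i^2 \ll n$, the sum $X$ is an $\ell^1$-small perturbation of a $\pm 1$ Rademacher sum of odd length, hence typically within a small error of an odd integer; the constraint $|X| \le \sqrt{3}/2$ then pins $X$ near $\pm 1$, an event of probability $\Omega(1/\sqrt n)$ by a 1D Littlewood--Offord bound. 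After paying the $\Omega(1)$ conditioning cost this still beats $c/n$.

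The main obstacle lies at the interface of the two cases, when $\lambda_2 \approx \eta n$ or when the $|a_i|$ are spread over many scales strictly inside $(0,1)$. There, $X$ is not well-approximated by an integer (so the parity of $n$ gives no gain), yet the two-dimensional spread is insufficient to run the Fourier argument of the first case with a good constant. A plausible route would be a dyadic decomposition of the vectors by $|b_i|$ with level-by-level conditional anti-concentration estimates, followed by an inductive removal of near-parallel pairs to reduce to either a strictly 2D or strictly 1D subproblem where parity can be invoked. Controlling the constants through this decomposition, without losing a logarithmic factor and correctly integrating the parity constraint across scales, seems to require a genuinely new idea beyond Beck's second-moment approach, and is where I expect the bulk of the technical difficulty to lie.
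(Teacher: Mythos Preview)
The conjecture you are attempting to prove is \emph{false}: the paper disproves it in \Cref{thm:radius1} by exhibiting, for every odd $n$, unit vectors $v_1,\dotsc,v_n\in\RR^2$ with $\prob{\norm{\sigma_V}_2\le 1}=O(n^{-3/2})$. So your proposal necessarily contains a genuine gap.

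The gap is in your ``genuine 2D regime''. You assert that if $\lambda_2\ge\eta n$ then a Fourier/Esseen decoupling of the two coordinate projections yields $\prob{\norm{\sigma}_2\le 1}=\Omega_\eta(1/n)$, and that ``the parity of $n$ is not needed'' there. This is already wrong for even $n$: taking an odd number of copies of $(1,0)$ and an odd number of copies of $(0,1)$ gives $\lambda_1=\lambda_2=n/2$ yet $\norm{\sigma}_2^2\ge 2$ deterministically. The paper's odd-$n$ counterexample (see the proof of \Cref{thm:construction-small-radius} with $d=2$) also sits squarely in your 2D regime: it takes roughly $n/3$ copies each of $(\cos\beta,\sin\beta)$, $(\cos\beta,-\sin\beta)$, and $(0,1)$ with $\beta$ tiny, giving $\lambda_2\approx n/3$, yet the event $\norm{\sigma}_2\le 1$ forces three essentially independent constraints ($\cE_1^+=0$, $\cE_1^-=\pm1$, $\cE_2=0$), each of probability $\Theta(n^{-1/2})$, for a product of $\Theta(n^{-3/2})$.

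The underlying error is the heuristic that two orthogonal directions of variance $\Omega(n)$ give two ``independent'' $1/\sqrt n$ factors. Large variance in a direction is a statement about second moments of the projections $a_i,b_i$, not about independence of the coordinate sums; the same Rademacher variables feed into both coordinates, and the construction above arranges the parities so that landing in the unit disc requires one more independent central-limit event than the dimension count suggests. No Esseen-type bound can recover $\Omega(1/n)$ here because the true answer is smaller.
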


He, Ju\v{s}kevi\v{c}ius, Narayanan, and Spiro had already noted in~\cite{He2024-cp} that their pairing technique could be used to show $\prob{\norm{\eps_1 v_1 + \dotsb + \eps_n v_n}_2 \leq r } \geq \Omega(1/n)$ for some $r$ slightly smaller than $\sqrt{2}$, but new ideas are required to get close to $1$.
In our first result, we provide an approximate version of \Cref{conj:HJNSodd}, showing that for any $r$ arbitrarily close to $1$, a lower bound of order $1/n$ still holds.

\begin{theorem}
\label{thm:approximate}
For any $\delta > 0$ there is a constant $c_{\delta} >0$ such that, if $n$ is odd and $v_1, \dotsc, v_n \in \RR^2$ are unit vectors, then
\begin{align*}
    \prob[\big]{ \norm{\eps_1 v_1 + \cdots + \eps_n v_n}_2 \leq 1 + \delta } \geq \frac{c_{\delta}}{n}.
\end{align*}
\end{theorem}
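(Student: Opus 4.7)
The plan is to establish a structural dichotomy for the configuration $v_1, \dotsc, v_n$, based on how close it is to the Erdős--Sárközy--Szemerédi extremal example of two orthogonal clusters. Fix $\delta > 0$ and a small threshold $\eta = \eta(\delta) > 0$ to be chosen at the end. I would call the configuration \emph{$\eta$-orthogonally clustered} if there exist orthogonal unit vectors $u_1, u_2 \in \RR^2$ such that all but at most $\eta n$ of the $v_i$'s lie within angular distance $\eta$ of $\set{\pm u_1, \pm u_2}$. In the non-clustered case, the intuition is that near-orthogonal cluster configurations are the only obstruction to improving the radius $\sqrt{2}$ in \Cref{thm:beck}; I would aim for a quantitative strengthening of the form: if the configuration is not $\eta$-orthogonally clustered, then $\prob{\norm{\eps_1 v_1 + \dotsb + \eps_n v_n}_2 \leq \sqrt{2} - \eta'} \geq c/n$ for some $\eta' = \eta'(\eta) > 0$. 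This could be established by adapting the pairing technique of He--Juškevičius--Narayanan--Spiro, noting that pairs of consecutive-angle vectors whose directions are bounded away from $\set{\pm u_1, \pm u_2}$ supply a strictly positive cancellation margin. Choosing $\eta$ small enough that $\sqrt{2} - \eta' \leq 1 + \delta$ would resolve this case.

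In the clustered case, I would write $[n] = C_1 \sqcup C_2 \sqcup C_3$, where the index $i$ belongs to $C_j$ if $v_i$ lies within angle $\eta$ of $\set{\pm u_j}$ for $j \in \set{1,2}$, and $\abs{C_3} \leq \eta n$. Since $n$ is odd, at least one of $\abs{C_1}$, $\abs{C_2}$ has even parity after accounting for the contribution of $C_3$; say $\abs{C_1}$ is even and $\abs{C_2}$ is odd. I would then condition on sign patterns that force $\sum_{i \in C_1} \eps_i v_i$ to lie near $0$ and $\sum_{i \in C_2} \eps_i v_i$ to lie near $\pm u_2$. Local central limit estimates for one-dimensional $\pm 1$ sums give probabilities of order $1/\sqrt{\abs{C_1}}$ and $1/\sqrt{\abs{C_2}}$ respectively, which multiply to at least $\Omega(1/n)$ since $\abs{C_1}\abs{C_2} \leq n^2/4$. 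If the perturbations could be neglected, this would produce $\norm{\sigma}_2$ close to $1$, as desired.

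The \textbf{main obstacle} lies in controlling two sources of perturbation: (i) the angular deviation of each $v_i$ from $\set{\pm u_1, \pm u_2}$ inside $C_1 \cup C_2$, which by a naive second-moment bound contributes fluctuations of order $\sqrt{n}\,\eta$ and would swamp $\delta$ for any $\eta$ independent of $n$; and (ii) the residual vectors in $C_3$, whose signed sum can be as large as $\abs{C_3} \leq \eta n$ in the worst case. For (i), the idea will be to refine the within-cluster pairing by matching vectors with opposite angular deviations from the cluster axis, so that the first-order errors cancel pairwise and only quadratic errors $O(\eta^2)$ per pair remain. For (ii), a nested application of \Cref{thm:beck} to $C_3$ yields only the weaker bound $\sqrt{2}$ on $\norm{\sigma_{C_3}}_2$, so a further recursive decomposition (iterating the same dichotomy on $C_3$, or incorporating $C_3$'s vectors into the pairing of (i) when their angles are close enough to the cluster axes) will be needed to push the residual below $\delta$. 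The technical heart of the proof will be ensuring that these two corrections together remain $\leq \delta$ uniformly in $n$, while preserving the $\Omega(1/n)$ probability lower bound.
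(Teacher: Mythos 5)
Your dichotomy does not close, and the fatal problem is in the non-clustered branch. There you want $\prob[\big]{\norm{\eps_1v_1+\dotsb+\eps_nv_n}_2\leq \sqrt{2}-\eta'}\geq c/n$ with $\sqrt{2}-\eta'\leq 1+\delta$, i.e.\ you need a cancellation margin $\eta'\geq \sqrt{2}-1-\delta$, a fixed constant of size roughly $0.41$ when $\delta$ is small. But the margin $\eta'(\eta)$ you can extract from the pairing technique tends to $0$ as $\eta\to 0$, so your tradeoff runs in the wrong direction: shrinking $\eta$ to make the clustered case tractable makes $\sqrt{2}-\eta'$ move \emph{away} from $1+\delta$. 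As the authors note explicitly, the He--Ju\v{s}kevi\v{c}ius--Narayanan--Spiro pairing only lowers the radius slightly below $\sqrt{2}$; getting down to $1+\delta$ is the entire difficulty of the theorem, and ``not orthogonally clustered'' does not buy it. The clustered branch is also unresolved: you correctly identify that the angular perturbations and the residual set $C_3$ (of size up to $\eta n$) can each contribute errors far exceeding $\delta$, but the proposed fixes do not work as stated. Pairing vectors with opposite deviations $\pm\alpha$ from the cluster axis makes $v_i-v_j$ \emph{perpendicular} to the axis with norm $2\sin\alpha$, which summed over $\Theta(n)$ pairs is not $O(\delta)$; and the parity bookkeeping (``accounting for the contribution of $C_3$'') would require moving a $C_3$ vector, which is not near either axis, into a cluster.

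For contrast, the paper needs no dichotomy. Put the $v_i$ in standard form $v_i=(\cos\theta_i,\sin\theta_i)$ with $0\leq\theta_1\leq\dotsb\leq\theta_n<\pi$. Consecutive pairing of $2k$ of the $2k+1$ vectors has $\sum_{\set{i,j}}\abs{\theta_i-\theta_j}\leq\pi/2$ and hence $\sum_{\set{i,j}}\norm{v_i-v_j}_2^2\leq 2$ (their Lemma 3.1); a convexity argument then shows that any pairing with large total squared distance contains one very bad pair, so greedily discarding $O(1/\delta^2)$ pairs yields a pairing $\cP$ with $E_2(\cP)\leq\delta^2/2$ covering all but a set $B$ of bounded odd size. \Cref{prop:hjns} applied to the paired part gives concentration in a ball of radius $\delta$ with probability $\Omega_\delta(1/n)$, while Swanepoel's theorem (\Cref{thm:alternating-sums}) applied to $B$ gives a signing of $B$ into the unit disk with probability at least $2^{-\card{B}}=\Omega_\delta(1)$; the triangle inequality and independence finish the proof. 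The key idea you are missing is that one may simply \emph{discard a bounded number of vectors} to make the pairing error arbitrarily small uniformly over all configurations, and then use the vector-balancing theorem on the leftover odd set — rather than trying to classify configurations.
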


While our proof develops on the pairing technique from~\cite{He2024-cp}, another important ingredient is the following vector balancing result of Swanepoel~\cite{Swanepoel2000-ha}, later reproved by Bárány, Ginzburg and V. S. Grinberg~\cite{Barany2013-vn}.

\begin{theorem}[Swanepoel]
\label{thm:alternating-sums}
Let $n$ be odd, and let $v_1, \dotsc, v_{n} \in \RR^2$ be unit vectors.
Then there exist signs $\eta_1, \dotsc, \eta_{n} \in \set{-1,1}$ such that
\begin{equation*}
    \norm[\Big]{\sum_{i=1}^{n} \eta_i v_i}_2 \leq 1.
\end{equation*}
\end{theorem}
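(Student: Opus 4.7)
My plan is to prove the statement by induction on odd $n$, with the substantive content appearing already at $n=3$. The base case $n=1$ is trivial: take $\eta_1=1$, so that $\norm{v_1}_2 = 1$.

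For $n=3$, I would use the freedom to replace each $v_i$ by $-v_i$ to place all three vectors in the closed upper half-plane, writing $v_i = (\cos\theta_i, \sin\theta_i)$ with $0 \le \theta_1 \le \theta_2 \le \theta_3 < \pi$. A direct expansion gives
\begin{equation*}
\norm{v_1 - v_2 + v_3}_2^2 = 3 + 2\cos(\theta_3 - \theta_1) - 2\cos(\theta_2 - \theta_1) - 2\cos(\theta_3 - \theta_2);
\end{equation*}
setting $\alpha = \theta_2 - \theta_1$ and $\beta = \theta_3 - \theta_2$, standard trigonometric identities reduce the right-hand side to $1 - 8\cos\frac{\alpha+\beta}{2}\sin\frac{\alpha}{2}\sin\frac{\beta}{2}$. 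Since $\alpha, \beta \ge 0$ and $\alpha + \beta < \pi$, every factor is non-negative, so $\norm{v_1 - v_2 + v_3}_2 \leq 1$, witnessing the $n=3$ case by negating the angularly middle vector.

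For the inductive step from $n-2$ to $n$, the easy case is when two of the vectors are proportional, say $v_i = \pm v_j$: then pick $\eta_i, \eta_j$ so that $\eta_i v_i + \eta_j v_j = 0$ and apply the induction hypothesis to the remaining $n-2$ unit vectors. In the generic case where the $v_i$ are pairwise non-proportional, the natural strategy of applying the $n=3$ case to three of the vectors and then recursing fails: the resulting combined vector has norm strictly below $1$, and the theorem does not extend to sub-unit vectors, as witnessed by $v_1 = (1,0)$, $v_2 = (0.9,0)$, $v_3 = (0,1)$, whose minimum signed-sum norm is $\sqrt{1.01} > 1$. This is the main obstacle.

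To handle the generic case, I would study a sign vector $\eta^*$ minimising $\norm{\sum_i \eta_i v_i}_2^2$ and exploit local optimality. Single-flip stationarity at $\eta^*$ yields $\eta^*_i \inner{v_i}{\sigma} \le 1$ for every $i$, where $\sigma = \sum_i \eta^*_i v_i$, and pair-flip stationarity gives $\eta^*_i \inner{v_i}{\sigma} + \eta^*_j \inner{v_j}{\sigma} \le 2 + 2\eta^*_i \eta^*_j \inner{v_i}{v_j}$ for every $i \ne j$. Combining these inequalities with the rank-at-most-$2$ structure of the Gram matrix $(\inner{v_i}{v_j})_{i,j}$ and the parity condition that $n$ is odd should force $\norm{\sigma}_2 \le 1$, following Swanepoel's argument in~\cite{Swanepoel2000-ha}, or the more geometric induction of Bárány, Ginzburg and Grinberg in~\cite{Barany2013-vn}.
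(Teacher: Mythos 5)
Your base case $n=3$ is correct and self-contained: the identity $\norm{v_1 - v_2 + v_3}_2^2 = 1 - 8\cos\frac{\alpha+\beta}{2}\sin\frac{\alpha}{2}\sin\frac{\beta}{2}$ checks out, and negating the angularly middle vector works. You are also right that the naive recursion fails because the statement does not survive shrinking a vector below unit norm. But the argument stops being a proof exactly where the theorem becomes nontrivial. The single-flip and pair-flip inequalities you derive at a minimising $\eta^*$ are correct as stated, but they do not by themselves imply $\norm{\sigma}_2 \leq 1$: already for $v_1=(1,0)$, $v_2=(0,1)$ the global minimiser satisfies every one of these conditions with $\norm{\sigma}_2=\sqrt{2}$, so any successful argument must use the oddness of $n$ in an essential way, and the stationarity inequalities are parity-blind — you never say where or how parity enters. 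Asserting that they ``should force'' the bound by ``following Swanepoel's argument \dots or the \dots induction of B\'ar\'any, Ginzburg and Grinberg'' is circular: those are precisely the published proofs of the statement you are asked to prove, and neither proceeds via local optimality of a minimising sign vector.

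For comparison, the argument the paper relies on (reproduced inside the proof of \Cref{thm:lower-bound}) is constructive and global rather than variational. After flipping signs so that $v_1, \dotsc, v_n, -v_1, \dotsc, -v_n$ occur in this cyclic order as the vertices of the convex polygon $P = \mathrm{conv}\set{\pm v_1, \dotsc, \pm v_n}$, one takes the alternating sum $u = \sum_{i=1}^{n}(-1)^{i-1} v_i$ — oddness of $n$ is exactly what makes this ``every second vertex of $P$'' — writes $2u$ as an alternating sum of the edge vectors $a_i$ of $P$, and bounds $\norm{2u}_2$ by the length of the chord $[-b,b]$ that the line spanned by $u$ cuts out of $P$, via an oblique projection onto that line. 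That step, absent from your proposal, is the entire content of the theorem; without it (or a genuine substitute) the inductive step is a gap, not a proof.
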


In other words, this result shows that out of the $2^n$ possible signings $\eta_i$, at least one is such that $\sum_{i=1}^{n} \eta_i v_i$ falls inside a ball of radius $1$ centred at the origin.
Hence, this gives the weaker bound of $2^{-n}$ in place of $c/n$ for \Cref{conj:HJNSodd}.
In our next result, we provide an enhanced version of Swanepoel's result by showing that indeed there are exponentially many different signings with $\norm{\sum_{i=1}^{n} \eta_i v_i}_2 \leq 1$.

\begin{theorem}
\label{thm:lower-bound}
If $n \geq 1$ is odd and $v_1, \dotsc, v_n \in \RR^2$ are unit vectors, then
\begin{align*}
    \prob[\big]{ \norm{\eps_1 v_1 + \dotsb + \eps_n v_n}_2 \leq 1 } \geq \frac{1}{4} (0.525)^n.
\end{align*}
\end{theorem}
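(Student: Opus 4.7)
The plan is to upgrade Swanepoel's \Cref{thm:alternating-sums} from producing a single valid signing to producing an exponential family of valid signings.

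\emph{Base signing.} Apply \Cref{thm:alternating-sums} to obtain $\eta^* \in \set{-1,1}^n$ with $\|\sigma^*\|_2 \leq 1$, where $\sigma^* \defined \sum_{i=1}^n \eta^*_i v_i$. Define the signed unit vectors $w_i \defined \eta^*_i v_i$, which satisfy $\sum_{i=1}^n w_i = \sigma^*$.

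\emph{Reduction to subset counting.} Every signing $\eta \in \set{-1,1}^n$ is determined by the flip set $F \defined \set{i : \eta_i \neq \eta^*_i} \subseteq [n]$, with resulting sum $\sigma^* - 2\tau_F$, where $\tau_F \defined \sum_{i \in F} w_i$. The signing is valid exactly when $\tau_F$ lies in the closed disk $D$ of radius $1/2$ centred at $\sigma^*/2$. It therefore suffices to exhibit at least $(1.05)^n/4$ subsets $F \subseteq [n]$ with $\tau_F \in D$.

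\emph{Block construction.} Partition $[n]$ into blocks of a fixed odd size $k$ (with a small remainder absorbed separately), where $k$ is a small constant to be optimised. Within each block $B$, apply \Cref{thm:alternating-sums} to the unit vectors $(w_i)_{i \in B}$ to obtain a sub-signing $\zeta^B \in \set{-1,1}^B$ with $\|s_B\|_2 \leq 1$, where $s_B \defined \sum_{i \in B} \zeta^B_i w_i$. Each block then offers two candidate sub-signings — the one inherited from $\eta^*$ and the Swanepoel sub-signing $\zeta^B$ — each contributing a vector of norm at most $1$ to $\tau_F$. Independent per-block choices produce $2^{n/k}$ candidate flip sets $F$. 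A meta-level vector balancing argument, combining \Cref{thm:beck} with the geometry of the block sub-sums, selects a sizeable fraction of these for which $\tau_F \in D$; optimising the block size $k$ against the quality of the meta-balance yields the explicit constant $0.525$.

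\emph{Main obstacle.} The chief technical difficulty is the meta-balancing step: a direct application of Beck's \Cref{thm:beck} at the meta-level merely places $\tau_F$ within distance $\sqrt{2}$ of $\sigma^*/2$, whereas we need distance at most $1/2$. Closing this gap requires exploiting the additional freedom of having several candidate sub-signings per block, and a careful accounting of how the per-block slack $1 - \|s_B\|_2$ combines across blocks to match the global slack $1 - \|\sigma^*\|_2$. This bookkeeping is where the explicit constant $0.525$ ultimately arises.
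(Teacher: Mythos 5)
Your reduction to counting flip sets $F$ with $\tau_F$ in the disk of radius $1/2$ about $\sigma^*/2$ is correct, but the argument stops exactly where the theorem begins. The block scheme as described does not deliver what you claim. First, the contribution of a block $B$ to $\tau_F$ when you switch to the Swanepoel sub-signing $\zeta^B$ is $\frac12\bigl(\sum_{i\in B}w_i - s_B\bigr)$, whose norm can be as large as $(k+1)/2$, not at most $1$; and the "inherited" choice contributes $0$, so the two candidates are not two comparable unit-norm vectors that a meta-level balancing could act on. Second, even granting per-block contributions of norm at most $1$, the meta-problem is to place a signed/selected sum of $n/k$ non-unit vectors inside a ball of radius $1/2$ while retaining exponentially many choices --- but \Cref{thm:beck} only reaches radius $\sqrt{2}$ and \Cref{thm:alternating-sums} only reaches radius $1$ (and only for unit vectors, with a single signing). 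There is no mechanism in the proposal that converts the per-block slacks $1-\norm{s_B}_2$ into the global slack $1-\norm{\sigma^*}_2$: the Swanepoel signing of a block is unrelated to the restriction of $\eta^*$ to that block, and in the extremal configuration $v_1=\dotsb=v_n=(1,0)$ every block slack and the global slack are simultaneously zero, so no accumulation of slack is available. The "main obstacle" you flag is therefore not a bookkeeping issue but the entire content of the theorem, and the constant $0.525$ cannot be extracted from what is written.

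For comparison, the paper closes this gap not by blocking but by exploiting the geometry of a specific base signing: taking $u=\sum_i(-1)^{i-1}v_i$ to be the alternating sum around the convex hull of $\set{\pm v_i}$, the admissible perturbations are flips of parity-balanced pairs of consecutive vectors, each moving the sum by $\pm 2(v_i-v_j)$. The crucial point is a trade-off between slack and clustering: writing $u=(\beta,0)$, an ellipse of the form $\normstar{\,\cdot\,}\leq\sqrt{1-\abs{\beta}}$ (the Euclidean norm stretched by $(1-\abs{\beta})^{-1/2}$ in the $x$-direction) centred at $u$ fits inside the unit disk, while an integral estimate shows the total $\normstar{}$-length of the pairs is at most $\pi\sqrt{1-\abs{\beta}}$ --- so the allowed perturbation budget and the available perturbations shrink at the same rate as $\abs{\beta}\to 1$. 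Splitting the pairing into seven groups, one group of $\Omega(n)$ pairs can be flipped freely, giving $2^{\Omega(n)}$ valid signings; the degenerate cases $\beta=\pm1$ are handled separately by a rigidity argument producing $(n-1)/2$ identical or opposite pairs. Some analogue of this slack-versus-clustering mechanism is what your proposal is missing.
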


While \Cref{thm:lower-bound} provides an exponential improvement over \Cref{thm:alternating-sums}, this is still quite far from the bound of order $\Omega(1/n)$ in the original question of Erdős.
However, our next result shows that a bound of order $\Omega(1/n)$ cannot be attained, as \Cref{conj:HJNSodd} is false.

\begin{theorem}
\label{thm:radius1}
There is a constant $C > 0$ such that, for every $n$ odd, there exists unit vectors $v_1,\dotsc,v_n\in \RR^2$ such that
\begin{align}
\label{eq:radius1}
    \prob[\big]{ \norm{\eps_1 v_1 + \cdots + \eps_n v_n}_2 \leq 1 } \leq \frac{C}{n^{3/2}}.
\end{align}
\end{theorem}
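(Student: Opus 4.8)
The plan is to construct an explicit example where the Rademacher sum $\sigma = \eps_1 v_1 + \dotsb + \eps_n v_n$ is forced, by a near-rigidity phenomenon, to stay close to a point at distance roughly $1$ from the origin, so that $\norm{\sigma}_2 \le 1$ requires $\sigma$ to land in a thin lens-shaped region. Concretely, I would take most of the $v_i$ (say $n - k$ of them, with $k$ small) to come in a handful of antipodal-ish clusters whose signed sums are strongly concentrated — for instance $m$ copies of $(1,0)$ and $m$ copies of $(0,1)$ with $n - 2m$ chosen appropriately — so that the bulk contribution $\sigma_{\mathrm{bulk}}$ concentrates around a point $p$ with $\norm{p}_2$ slightly larger than $1$ (the parity obstruction from the excerpt is exactly what prevents $p$ from being the origin when $n$ is odd). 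Then the remaining $k = \Theta(1)$ or $k = \Theta(\log n)$ vectors form a small ``correction gadget'' whose job is to push $\sigma$ back inside the unit disk, which it can only do for a controlled fraction of its own sign patterns.

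The key computation is then a local central limit / anticoncentration estimate. Writing $\sigma = \sigma_{\mathrm{bulk}} + \sigma_{\mathrm{gadget}}$, for each fixed signing of the gadget vectors one has $\sigma = \sigma_{\mathrm{bulk}} + w$ for a fixed shift $w$, and $\sigma_{\mathrm{bulk}}$ is a sum of i.i.d.\ $\pm(1,0)$ and $\pm(0,1)$ terms whose two coordinates are independent one-dimensional Rademacher sums of length $m = \Theta(n)$. The event $\norm{\sigma}_2 \le 1$ becomes the event that $(S_1, S_2)$, a pair of independent centred random walks, lands in a disk of radius $1$ centred at $-w$; since the typical spread of each $S_j$ is $\Theta(\sqrt n)$, the probability that $(S_1,S_2)$ hits any fixed $O(1)$-area region is $O(1/n)$ by the local CLT — and I would arrange the geometry (choosing $p$, hence $w$, so the relevant disk meets the lattice support of $(S_1,S_2)$ in only $O(\sqrt n)$ points rather than $\Theta(n)$ points, e.g.\ because the disk is tangent to or barely overlaps the reachable region, or because a parity constraint kills half the lattice and the remaining accessible arc has length $O(\sqrt n)$) so that this probability is in fact $O(n^{-3/2})$. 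Summing over the $O(1)$ gadget signings (or absorbing a $\operatorname{poly}(\log n)$ factor if $k$ grows) preserves the bound $O(n^{-3/2})$.

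The main obstacle I expect is calibrating the construction so that the unit disk genuinely intersects the support of $(S_1, S_2)$ in only $O(\sqrt{n})$ lattice points with non-negligible mass, rather than the generic $\Theta(n)$: a crude ``disk of radius $1$ around a fixed point'' argument only gives $O(1/n)$, matching Beck's theorem, so the extra $n^{-1/2}$ saving has to come from a genuinely degenerate configuration — the disk should be (almost) externally tangent to the convex hull of the reachable points, so that it only captures a boundary strip of width $O(1)$ and length $O(\sqrt n)$ in one coordinate while the other coordinate is pinned to $O(1)$ many values. Making this tangency robust for \emph{every} odd $n$ (not just a subsequence) is the delicate point: one must choose the multiplicities $m$ and the gadget as explicit functions of $n \bmod 4$ (or a similar small modulus) and check the inequality $\norm{p}_2 > 1$ with enough room, uniformly in $n$. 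Once the geometry is pinned down, the probability bound itself is a routine application of the local central limit theorem for simple random walk (or of the Erdős--Littlewood--Offord bound $\binom{m}{\lfloor m/2\rfloor} 2^{-m} = O(m^{-1/2})$ applied coordinatewise), together with a union bound over the finitely many gadget signings.
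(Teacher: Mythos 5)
There is a genuine gap here, and it sits exactly where you flag ``the main obstacle'': the mechanism that is supposed to buy the extra factor of $n^{-1/2}$ over Beck's bound is not supplied, and the one you sketch cannot work. With $m$ copies of $(1,0)$, $m'$ copies of $(0,1)$ and a gadget of $k=O(1)$ extra unit vectors, the distribution of $\sigma$ is a mixture of $2^k=O(1)$ translates of a product of two independent one-dimensional walks supported on a shifted copy of $2\ZZ\times 2\ZZ$. A disk of radius $1$ meets such a lattice in $O(1)$ points (not $\Theta(n)$, so there is no count to reduce to $O(\sqrt{n})$), and each support point within distance $O(1)$ of the origin carries mass $\Theta(1/n)$. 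By Swanepoel's theorem (\Cref{thm:alternating-sums}) the probability is positive, so some translate of the unit disk must contain a support point, and since the gadget shift is $O(1)$ that point lies in the bulk; hence the probability is $\Omega(1/n)$ and this construction provably cannot achieve $O(n^{-3/2})$. The alternative you mention --- making the disk tangent to the convex hull of the reachable points --- pushes you into the extreme tail, where the probability is exponentially small (this is essentially Sorkin's regime for the radius-$1$ problem), not into an $n^{-3/2}$ regime; and a $\Theta(\log n)$-size gadget would need a further, unspecified idea to ensure that only an $O(n^{-1/2})$-fraction of its signings produce a translate of the disk that meets the bulk support.

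The paper's construction supplies the missing degree of freedom differently: it uses \emph{three} clusters, each of size $\Theta(n)$, namely $k_1^+$ copies of $(\cos\beta,\sin\beta)$, $k_1^-$ copies of $(\cos\beta,-\sin\beta)$ and $k_2$ copies of $(0,1)$, with $\sin\beta<1/n$ and the parities of $k_1^+,k_1^-,k_2$ chosen as even, odd, even. Splitting the first basis direction into two nearly parallel perturbed copies means that the event $\norm{\sigma_V}_2\leq 1$ forces $\cE_2=0$, $\cE_1^++\cE_1^-=\pm1$ \emph{and} $\cE_1^+-\cE_1^-=\pm1$ (hence $\cE_1^+=0$, $\cE_1^-=\pm1$): three independent local constraints, each of probability $\Theta(n^{-1/2})$, giving $O(n^{-3/2})$. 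That perturbation trick --- converting one coordinate constraint into two independent sign-sum constraints --- is the key idea your proposal is missing.
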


In particular, Erdős' original conjecture from 1945 is not only false for even $n$ as previously noted, but it is false for odd $n$ as well.

The existence of constructions like those in \Cref{thm:radius1} is a delicate matter since the value of $\prob[\big]{ \norm{\eps_1 v_1 + \cdots + \eps_n v_n}_2 \leq 1 }$ has to be atypically small.
Indeed, $n \geq 2$ and $v_1, \dotsc, v_n$ are selected independently and uniformly at random from the circle $\SS^1 \subseteq \RR^2$, then
\begin{equation*}
    \Expec_{v_1,\dotsc,v_n \in \SS^1} \prob[\big]{ \norm{\eps_1 v_1 + \cdots + \eps_n v_n}_2 \leq 1 } = 1/(n+1),
\end{equation*}
a fact that traces back to the work of Rayleigh on `random flights'; see Bernardi~\cite{Bernardi2013-ql} for a modern and elementary proof.

\Cref{thm:radius1}, together with \Cref{thm:approximate}, showcases a surprising change of behaviour that occurs when considering the radius to be exactly $1$.
This \emph{double-jump} phase transition, reminiscent to the one that occurs with the size of the largest component of the Erdős-Rényi random graph~\cite{Erdos1960-vm}, illustrating the richness of phenomena exhibited by the reverse Littlewood--Offord problem.

After attending a seminar about our work, Gregory Sorkin~\cite{Sorkin25} found an alternative construction of unit vectors $v_1, \dotsc, v_n \in \RR^2$, for $n$ odd, such that
\begin{equation*}
     \prob[\big]{ \norm{\eps_1 v_1 + \cdots + \eps_n v_n}_2 \leq 1 } \leq 2^{-(n-1)/2}.
\end{equation*}
This not only show that our lower bound in \Cref{thm:lower-bound} is close to being sharp, but also demonstrate the acute contrast of behaviour at the radius of the double-jump, compared with other radii.
This also answers our \Cref{qu:critical2}.

For a set of vectors $V = \set{v_1, \dotsb, v_n} \subseteq \RR^d$, denote by $\sigma_V$ the random variable
\begin{equation*}
    \sigma_V \defined \eps_1 v_1 + \dotsb + \eps_n v_n,
\end{equation*}
where $\eps_1, \dotsc, \eps_n$ are independent Rademacher random variables.
Consider the quantity
\begin{equation*}
    F_{d,r}(n) \defined \inf_{V \in (\SS^{d-1})^n} \prob[\big]{\norm{\sigma_V}_2 \leq r}.
\end{equation*}
What we have seen above implies that the asymptotic behaviour of $F_{d,r}(n)$ may depend on the parity of $n$.
For instance, if $d = 2$ and $n$ is even, Beck's result (\Cref{thm:beck}) implies that
\begin{equation*}
    F_{2,r}(n) = \begin{cases}
        0 & \text{if $r < \sqrt{2}$}, \\
        \Theta_r(n^{-1}) & \text{if $r \geq \sqrt{2}$. }
    \end{cases}
\end{equation*}
On the other hand, if $n$ is odd, we now know from \Cref{thm:approximate}, \Cref{thm:lower-bound} and the construction of Sorkin that
\begin{equation*}
    F_{2,r}(n) = \begin{cases}
        0 & \text{if $r < 1$}, \\
        \Omega\p[\big]{0.525^n} \text{ and } O\p[\big]{(1/\sqrt{2})^{ n}} & \text{if $r = 1$}, \\
        \Theta_r(n^{-1}) & \text{if $r >1 $. }
    \end{cases}
\end{equation*}
Determining the precise order of magnitude of $F_{2,1}(n)$ when $n$ is odd remains an intriguing open problem, see further discussions in \Cref{sec:conclusion}.

Much less is known in higher dimensions.
By considering examples consisting of repeated orthogonal vectors, one can easily see that $F_{d,r}(n) = 0$ for all $r < \sqrt{d}$ when $n \equiv d \mod{2}$, and for all $r < \sqrt{d-1}$ when $n \not\equiv d \mod{2}$.
Beck's theorem shows that $F_{d,r}(n) = \Theta_r(n^{-d/2})$ for $r \geq \sqrt{d}$, regardless of the parity.
For $n \not\equiv d \mod{2}$, our proof of \Cref{thm:radius1} actually leads to the more general result below.

\begin{restatable}{theorem}{constsmallradius}
\label{thm:construction-small-radius}
For every $d \geq 1$, there is a constant $C_d > 0$ such that for every $n$ with $n \not\equiv d \pmod{2}$, there is a sequence of unit vectors $v_1, \dotsc, v_n \in \RR^d$ with
\begin{equation*}
    \prob[\big]{\norm{\eps_1 v_1 + \dotsb + \eps_n v_n}_2 \leq \sqrt{d - 1}} \leq  \frac{C_d}{n^{(d+1)/2}}.
\end{equation*}
\end{restatable}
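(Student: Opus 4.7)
The plan is to reduce to the planar case by padding the construction of \Cref{thm:radius1} with extra copies of orthogonal axes. For a positive odd integer $m$, let $U = \{u_1, \dotsc, u_m\} \subseteq \SS^1$ be the construction from \Cref{thm:radius1}, so that
\begin{equation*}
    \prob[\Big]{\norm[\Big]{\sum_{i=1}^m \eps_i u_i}_2 \leq 1} \leq \frac{C}{m^{3/2}}.
\end{equation*}
For a positive odd integer $l$, embed $U$ into $\RR^d$ via the inclusion into the first two coordinates and augment by $l$ copies of each remaining basis vector $e_3, \dotsc, e_d$. The resulting multiset $V$ has cardinality $n = m + (d - 2) l$, and with $m, l$ odd we get $n \equiv d + 1 \pmod 2$, matching the hypothesis $n \not\equiv d \pmod 2$. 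Conversely, for any such $n$ and any positive odd $l$, the value $m = n - (d-2) l$ is automatically odd, since $n - (d-2) l \equiv n + d \equiv 1 \pmod 2$; so a valid decomposition exists for all sufficiently large $n$, and small $n$ are absorbed into $C_d$.

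Decompose the signed sum as $\sigma_V = \sigma_U + \sum_{j=3}^d X_j e_j$, where $\sigma_U$ is the embedded planar random sum (supported in the first two coordinates) and each $X_j$ is a sum of $l$ independent Rademacher variables, hence an odd integer. The decomposition is orthogonal, giving
\begin{equation*}
    \norm{\sigma_V}_2^2 = \norm{\sigma_U}_2^2 + \sum_{j=3}^d X_j^2.
\end{equation*}
Since each $X_j^2 \geq 1$, the constraint $\norm{\sigma_V}_2^2 \leq d - 1$ requires $\sum_{j \geq 3} X_j^2 \leq d - 1$. Any $|X_j| \geq 3$ would force $\sum_{j \geq 3} X_j^2 \geq 9 + (d - 3) = d + 6 > d - 1$, a contradiction; hence $|X_j| = 1$ for every $j \geq 3$ and therefore $\norm{\sigma_U}_2^2 \leq 1$.

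By the independence of $\sigma_U$ and the $X_j$'s,
\begin{equation*}
    \prob[\Big]{\norm{\sigma_V}_2 \leq \sqrt{d-1}} \leq \prob[\Big]{\norm{\sigma_U}_2 \leq 1} \cdot \prod_{j=
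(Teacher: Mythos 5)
Your reduction of the case $d \geq 3$ to the plane is sound as far as it goes: the parity bookkeeping is correct, the orthogonal decomposition $\norm{\sigma_V}_2^2 = \norm{\sigma_U}_2^2 + \sum_{j\geq 3} X_j^2$ does force $\abs{X_j} = 1$ for every $j \geq 3$ and $\norm{\sigma_U}_2 \leq 1$, and, completing the truncated computation, $\prob{\abs{X_j}=1} = \binom{l+1}{(l+1)/2}2^{-l} = O(l^{-1/2})$, so taking $l$ (and hence $m$) of order $n/(d-1)$ yields $O(m^{-3/2}\, l^{-(d-2)/2}) = O(n^{-(d+1)/2})$, which is the claimed exponent. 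This padding step is essentially the same device the paper uses for the coordinates $i \geq 3$, where odd multiplicities force $\cE_i = \pm 1$.

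The genuine gap is the appeal to \Cref{thm:radius1} as a black box. In the paper, \Cref{thm:radius1} has no independent proof: it is deduced as the special case $d = 2$ of the very theorem you are proving (\Cref{sec:radius1} opens by saying that \Cref{thm:construction-small-radius} is proved in order to obtain \Cref{thm:radius1}). Read as a proof of \Cref{thm:construction-small-radius}, your argument is therefore circular at $d = 2$, and for $d \geq 3$ it reduces everything to an unproven base case --- which is where all the content of the theorem lies. The missing ingredient is the planar construction itself: take $k_1^+$ copies of $(\cos\beta,\sin\beta)$ and $k_1^-$ copies of $(\cos\beta,-\sin\beta)$ with $\sin\beta < 1/n$, together with $k_2$ copies of $(0,1)$, where $k_1^+, k_2$ are even, $k_1^-$ is odd, and all three are of order $n/3$. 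One then checks that $\norm{\sigma}_2 \leq 1$ forces $\cE_2 = 0$ and $(\cE_1^+ + \cE_1^-)^2 = (\cE_1^+ - \cE_1^-)^2 = 1$, hence $\cE_1^+ = 0$ and $\cE_1^- = \pm 1$; these are three independent events, each of probability $O(n^{-1/2})$, giving the $O(n^{-3/2})$ bound. (Both your argument and the paper's implicitly require $d \geq 2$.) With that base case supplied, your padding argument does complete the proof, though the paper's direct construction handles all coordinates uniformly in one pass.
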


Here again, Sorkin's construction can be generalized to higher dimensions, improving the upper bound above to $O_d((1/\sqrt{2})^{n})$.
On the other hand, no analogue of \Cref{thm:lower-bound} is known in higher dimensions.
In fact, even the much weaker bound implied by \Cref{thm:alternating-sums} is missing, see \Cref{qu:vector-balancing} and the discussion below.

To summarise, in any $d \geq 1$, Beck's theorem gives that if $n \equiv d \mod{2}$, we have
\begin{equation*}
    F_{d,r}(n) = \begin{cases}
        0 & \text{if $r < \sqrt{d}$}, \\
        \Theta_r(n^{-{d/2}}) & \text{if $r \geq \sqrt{d}$. }
    \end{cases}
\end{equation*}
However, when $n \not\equiv d \mod{2}$, we only know
\begin{equation*}
    F_{d,r}(n) = \begin{cases}
        0 & \text{if $r < \sqrt{d-1}$}, \\
        O_d((1/\sqrt{2})^{n}) & \text{if $r = \sqrt{d - 1}$. } \\
        \Theta_r(n^{-{d/2}}) & \text{if $r \geq \sqrt{d}$. }
    \end{cases}
\end{equation*}
In particular, there is no double-jump threshold in dimension $d = 1$ and there is not sufficient evidence to suggest that it occurs when $d \geq 3$.
Furthermore, the value of the critical radius
\begin{equation}
\label{eq:rcrit-def}
    r_c^{\ast}(d) \defined \inf\set[\Big]{r > 0 \st  \liminf_{\substack{n\to \infty\\n \not\equiv d \mod{2}}} F_{d,r}(n) > 0}
\end{equation}
is not known when $d \geq 3$, although it must be in the range $[\sqrt{d-1},\sqrt{d}]$.
We now pose a question in discrepancy theory that, if answered positively, would imply that $r_c^\ast(d) = \sqrt{d-1}$.

\begin{restatable}[Refined vector balancing]{question}{refinedbal}
\label{qu:vector-balancing}
Let $v_1, \dotsc, v_n \in \RR^d$ be unit vectors with $n \not\equiv d \mod{2}$.
Is it always the case that there are signs $\eta_1, \dotsc, \eta_{n} \in \set{-1,1}$ with
\begin{equation*}
    \norm[\Big]{\sum_{i=1}^{n} \eta_i v_i}_2 \leq \sqrt{d-1} \;\; ?
\end{equation*}
\end{restatable}

While this question asks for a straightforward generalisation of \Cref{thm:alternating-sums} from Swanepoel, it remains unsolved for any $d \geq 3$.

In contrast, the problem of determining
\begin{equation*}
    r_c(d) \defined \inf\set[\Big]{r > 0 \st  \liminf_{n\to \infty} F_{d,r}(n) > 0}
\end{equation*}
was already posed in 1963 as a special case of a problem of Dvoretzky~\cite{Dvoretzky1963-gh}, who was interested in arbitrary norm in place of $\norm{}_2$.
The fact that $r_c(d) = \sqrt{d}$ has been proved independently by many authors in the early 80's, such as Sevast{\cprime}yanov~\cite{Sevast-yanov1980-jf}, Spencer~\cite{Spencer1981-qa}, V. V. Grinberg (unpubished, see~\cite{Barany1981-mi}), Beck~\cite{Beck1983-ef}, and Bárány and V. S. Grinberg~(see \cite{Barany2008-ca} and \cite{Barany1981-mi}).

\subsection{Regarding optimal constructions}

Recall that \Cref{thm:beck} states that if $V$ consists of $n$ unit vectors in $\RR^d$, then we have
\begin{equation*}
    \prob[\big]{\norm{\sigma_V}_2 \leq \sqrt{d}} \geq c_d n^{-d/2}
\end{equation*}
for some universal constant $c_d > 0$, depending solely on the dimension $d$.
While the order of magnitude of $\Omega(n^{-d/2})$ in \Cref{thm:beck} is best possible, finding the best implicit constants and constructions that attain them remain an elusive problem.

In dimension $d = 1$, all the configurations are equivalent and it is easy to see that $c_1 = \sqrt{2/\pi}$ is asymptotically the best constant attainable.
While this is sharp when $n$ is even, the constant can be improved to $2\sqrt{2/\pi}$ if $n$ is restricted to be odd.
Note that Sárközy and Szemerédi (unpublished, see Beck~\cite{Beck1983-ef}) determined that $\sqrt{2/\pi}$ is also asymptotically the best constant in the more general case where we allow $\norm{v_i}_2 \leq 1$ rather than $\norm{v_i}_2 = 1$.

For $d = 2$, Beck~\cite{Beck1983-ef} asked whether the optimal constant $c_2$ was given by taking a number of copies of the vectors $(1,0)$ and $(0,1)$ as equal as possible.

\begin{question}[Beck]
\label{qu:beck}
Let $V \subseteq \RR^2$ consists of $n = 4k+2$ unit vectors.
Is it true that
\begin{equation*}
    \prob[\big]{\norm{\sigma_V}_2 \leq \sqrt{2}} \geq 4\binom{2k+1}{k}^2 /2^{n} \;\, ?
\end{equation*}
\end{question}

If true, the bound in \Cref{qu:beck} would be best possible, matching with the case when $V$ consists of $2k+1$ copies of $(1,0)$ and $2k+1$ copies of $(0,1)$.
He, Ju\v{s}kevi\v{c}ius, Narayanan, and Spiro went further and raised the following question.

\begin{question}[Question 4.2 in \cite{He2024-cp}]
\label{qu:HJNSFunctionf}
How does the function
\begin{align*}
    f(r) \defined \liminf_{n\to\infty} \, \inf_{V \in (\SS^1)^n} \, n \cdot \prob[\big]{\norm{\sigma_V} \leq r}
\end{align*}
behave? In particular, is $f(r)$ always a multiple of $4/\pi$?
\end{question}

They noted that the `in particular' part of this question would hold if the minimiser of the probability was always roughly $n/2$ copies of the vectors $(1, 0)$ and $(0, 1)$.
When $r = \sqrt{2}$, they conjectured the following stronger statement, matching \Cref{qu:beck}.

\begin{conjecture}[Conjecture 4.3 in \cite{He2024-cp}]
\label{conj:HJNSOptimalExample}
For all $n$ sufficiently large, there exists some $t = t(n) \leq n$ such that for any set $V \subseteq \RR^2$ of $n$ unit vectors satisfy
\begin{align*}
    \prob[\big]{\norm{\sigma_V}_2 \leq \sqrt{2}}
    \geq \prob[\big]{\norm{\sigma_{V'}}_2 \leq \sqrt{2}},
\end{align*}
where $V'$ consists of $t$ copies of $(1,0)$ and $n - t$ copies of $(0,1)$.
\end{conjecture}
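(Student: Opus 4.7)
The plan is to reduce the minimisation of $p(V) \defined \prob{\norm{\sigma_V}_2 \leq \sqrt{2}}$ over $V \in (\SS^1)^n$ to an optimisation within the one-parameter family of axis-aligned configurations, and then to carry out this optimisation in closed form. By compactness of $(\SS^1)^n$ and lower semicontinuity of $p$ (which holds since $\set{x \in \RR^2 \st \norm{x}_2 \leq \sqrt{2}}$ is closed), a minimiser $V_\ast \in (\SS^1)^n$ exists; the structural heart of the proof is to show that one may take $V_\ast$ to be \emph{axis-aligned}, meaning every vector equals $\pm(1,0)$ or $\pm(0,1)$. Since $\sigma_V$ and $-\sigma_V$ are equidistributed, the $\pm$ signs may then be absorbed and $V_\ast$ reduces to a configuration of the form appearing in the conjecture, leaving only the choice of $t \in \set{0, 1, \dotsc, n}$ to determine.

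To prove axis-alignment, I would use a local variational argument on each vector. Fix the vectors $v_j$ for $j \neq i$, set $S_\eta \defined \sum_{j \neq i} \eta_j v_j$ for $\eta \in \set{-1,1}^{n-1}$, and write
\begin{align*}
    p(V) = 2^{-n} \sum_{\eta} \p[\big]{ \mathbf{1}_{\norm{S_\eta + v_i}_2 \leq \sqrt{2}} + \mathbf{1}_{\norm{S_\eta - v_i}_2 \leq \sqrt{2}} }.
\end{align*}
Parameterising $v_i = (\cos\theta, \sin\theta)$ and expanding $\norm{S_\eta \pm v_i}_2^2 \leq 2$, each summand is the characteristic function of an arc $\set{\theta \st \pm 2\, S_\eta \cdot v_i(\theta) \leq 1 - \norm{S_\eta}_2^2}$ whose length depends only on $\norm{S_\eta}_2$ and which is centred on the direction $\mp S_\eta/\norm{S_\eta}_2$. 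The goal is to show that the resulting step function $\theta \mapsto p(V)$ always attains its minimum at some $\theta \in \set{0, \pi/2, \pi, 3\pi/2}$; iterating this across coordinates then brings every vector of $V_\ast$ onto a coordinate axis without increasing $p(V_\ast)$.

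Given axis-alignment, write $V'$ as consisting of $t$ copies of $(1,0)$ and $n-t$ copies of $(0,1)$. Then $\sigma_{V'} = (X, Y)$ with $X$ and $Y$ independent Rademacher sums of lengths $t$ and $n-t$, and the event $X^2 + Y^2 \leq 2$ forces $(X,Y) \in \set{(0,0), (\pm 1, 0), (0, \pm 1), (\pm 1, \pm 1)}$. This reduces $p(V')$ to a short sum of central binomial coefficients, from which the optimal $t = t(n)$ is read off by direct calculation, splitting into cases by the parities of $n$ and $t$; comparing with $p(V_\ast)$ then yields the conjectured inequality.

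The principal obstacle is the single-coordinate minimisation step: it is far from clear that the step function $\theta \mapsto p(V)$ must attain its minimum on the axes, since the arcs contributing to it are centred on the directions $\pm S_\eta/\norm{S_\eta}_2$, which are axis-aligned only if the other $v_j$'s already are. A more global approach may therefore be needed, for instance an induction on the number of vectors not on an axis, coupled with an exchange move that realigns one such vector while keeping $p$ weakly monotone, or an averaging identity dominating $p(V)$ pointwise by $p$ of an axis-aligned companion configuration. Even the two-vector base case --- namely showing that replacing a non-orthogonal pair by an orthogonal one weakly decreases $p$ --- will already require careful attention to the boundary discontinuities of $p$, and it is plausible that this is where the conjecture will stand or fall.
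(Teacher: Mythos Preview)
Your proposal attempts to \emph{prove} the conjecture, but the paper \emph{disproves} it. The whole variational program---reducing to axis-aligned configurations and then optimising over $t$---is aimed at a false statement, and indeed the step you yourself flag as the principal obstacle (that the minimiser in $\theta$ must lie on an axis) is precisely where it breaks: the paper exhibits a configuration, not axis-aligned, that beats every orthogonal one.

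Concretely, take $V$ to consist of $n/3$ copies of each vertex of an equilateral triangle inscribed in the unit circle. A short computation (using that $\norm{\sigma_V}_2^2$ is a positive integer combination of squared differences $(s_i-s_j)^2$, so the event $\norm{\sigma_V}_2 \leq \sqrt{2}$ forces $\sigma_V = 0$) reduces $\prob{\norm{\sigma_V}_2 \leq \sqrt{2}}$ to $2^{-n}\sum_{i}\binom{n/3}{i}^3$, which by the Franel-number asymptotic equals $(1+o(1))\,2\sqrt{3}/(\pi n)$. On the other hand, any orthogonal configuration $V'$ with $t$ copies of $(1,0)$ and $n-t$ copies of $(0,1)$ satisfies $\prob{\norm{\sigma_{V'}}_2 \leq \sqrt{2}} = (1+o(1))\,4/(\pi n)$ when $t \sim n/2$ (and is larger otherwise). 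Since $2\sqrt{3} < 4$, the simplicial configuration achieves a strictly smaller probability for all large $n$, so no choice of $t = t(n)$ can make the conjectured inequality hold. Your single-coordinate minimisation claim therefore fails already when the other $n-1$ vectors are two-thirds of the triangle.
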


Our next result gives a negative answer to \Cref{qu:beck}, to the second part of \Cref{qu:HJNSFunctionf} and disproves \Cref{conj:HJNSOptimalExample}.

\begin{theorem}
\label{thm:BetterConstant}
Let $u_1, u_2, u_3 \in \RR^2$ be the vertices of an equilateral triangle inscribed in the unit circle centred at the origin.
Let $n = 3k$ and let $V \subseteq \RR^2$ consist of $k$ copies of vector $u_1$, $k$ copies of vector $u_2$ and $k$ copies of vector $u_3$.
Then we have
\begin{equation*}
    \prob[\big]{ \norm{\sigma_V}_2 \leq \sqrt{2} }
    = \p[\big]{1+o(1)}\frac{2 \sqrt{3}}{\pi n}.
\end{equation*}
\end{theorem}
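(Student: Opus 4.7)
The plan is to reduce the problem to a local central limit theorem (LCLT) computation on a sum of cubed Rademacher-walk probabilities. First, I would fix coordinates so that $u_1 = (1,0)$, $u_2 = (-1/2, \sqrt{3}/2)$ and $u_3 = (-1/2, -\sqrt{3}/2)$, and let $a$, $b$, $c$ denote the Rademacher sums of the $k$ copies of $u_1$, $u_2$, $u_3$ respectively. Then $a$, $b$, $c$ are i.i.d., each distributed as the sum of $k$ independent Rademacher variables, and using $u_1 + u_2 + u_3 = 0$ one obtains
\begin{equation*}
    \norm{\sigma_V}_2^2 = \bigl(a - \tfrac{b+c}{2}\bigr)^2 + \tfrac{3}{4}(b-c)^2.
\end{equation*}

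The key structural observation is a parity argument: each of $a$, $b$, $c$ has the same parity as $k$, so $b - c$ is an even integer, and the requirement $\tfrac{3}{4}(b-c)^2 \leq 2$ forces $b = c$; then $a - (b+c)/2 = a - b$ is also an even integer with square at most $2$, forcing $a = b$. Conversely, $a = b = c$ gives $\sigma_V = 0$. Hence
\begin{equation*}
    \set[\big]{\norm{\sigma_V}_2 \leq \sqrt{2}} = \set{a = b = c},
\end{equation*}
and the theorem reduces to computing $\prob{a = b = c} = \sum_j p_j^3$, where $p_j \defined \prob{a = j} = \binom{k}{(k+j)/2} 2^{-k}$ for integers $j \equiv k \pmod{2}$.

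To evaluate this sum asymptotically I would invoke the standard LCLT for the simple random walk (provable in one line by Stirling): $p_j = \sqrt{2/(\pi k)}\, e^{-j^2/(2k)} (1 + O(1/k))$ uniformly for $\abs{j} \leq k^{2/3}$. Summing over such $j$, whose spacing is $2$, gives a Riemann sum asymptotic to
\begin{equation*}
    \tfrac{1}{2} \int_{\RR} \bigl(2/(\pi k)\bigr)^{3/2} e^{-3 x^2 / (2k)} \dx = \frac{2}{\sqrt{3}\,\pi k} = \frac{2\sqrt{3}}{\pi n},
\end{equation*}
using $n = 3k$. The only remaining obstacle is the tail $\abs{j} > k^{2/3}$: using the uniform bound $p_j = O(k^{-1/2})$ together with Hoeffding's inequality yields $\sum_{\abs{j} > k^{2/3}} p_j^3 = O\bigl( k^{-1} e^{-k^{1/3}/2} \bigr)$, which is absorbed into the $o(1/n)$ error term. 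No step is genuinely delicate; the essential content of the theorem is the parity reduction to the event $\set{a = b = c}$.
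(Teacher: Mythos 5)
Your proof is correct and follows essentially the same route as the paper: the same parity argument reduces the event $\norm{\sigma_V}_2 \leq \sqrt{2}$ to the event that the three Rademacher sums coincide, after which the probability equals $2^{-n}\sum_i \binom{k}{i}^3$. The only difference is that the paper evaluates this sum by citing the known asymptotic for Franel numbers (\Cref{prop:franel}), whereas you re-derive it directly via a local CLT and Riemann-sum computation, which is equally valid.
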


We only take $n = 3k$ in \Cref{thm:BetterConstant} for convenience as the same result holds for other values of $n$ as long as the number of copies of each of the vectors is roughly the same, see \Cref{rem:BetterConstant}.

\Cref{qu:beck}, \Cref{conj:HJNSOptimalExample} and the second part of \Cref{qu:HJNSFunctionf} are all predicated on the assumption that the optimal bound for \Cref{thm:beck} is attained when the vectors in $V$ are orthogonal, in which case we have
\begin{equation*}
    \prob[\big]{ \norm{\sigma_V}_2 \leq \sqrt{2} }
    = \p[\big]{1+o(1)}\frac{4}{\pi n}.
\end{equation*}
\Cref{thm:BetterConstant} implies that a construction based on an equilateral triangle outperforms the orthogonal in dimension two.
However, it is still far from clear whether this new construction is optimal, see \Cref{qu:mercedes}.

The situation seems to be much more complex in higher dimensions.
Indeed, it could be tempting to conjecture that the $(d+1)$-regular simplex in $d$ dimensions is always the optimal example.
However, in \Cref{thm:OrthoBetterThanSimplex} we show that this is not the case when the dimension is high enough.
We say that a set of vectors $V = \set{v_1, \dotsc, v_n} \subseteq \SS^{d-1}$ is \emph{of simplicial type} if there exists a regular $d$-simplex $W = \set{w_1, \dotsc, w_{d+1}}$ centred at the origin such that for every $i \in [n]$, we have $v_i = w_j$ for some $j \in [d+1]$.
Similarly we say that a set of vectors $V = \set{v_1, \dotsc, v_n} \subseteq \SS^{d-1}$ is \emph{of orthogonal type} if there exists an orthogonal basis $W = \set{w_1, \dotsc, w_{d}}$ such that for every $i \in [n]$, we have $v_i = w_j$ for some $j \in [d]$.

\begin{theorem}
\label{thm:OrthoBetterThanSimplex}
There exists $d_0 \geq 0$ such that for all $d \geq d_0$, there is $0 < \eps_d < 1$ with the following property.
For every sufficiently large $n$, there is a set of $n$ vectors $Y$ of orthogonal type such that every set of $n$ vectors $X$ of simplicial type satisfies
\begin{align*}
    \prob[\big]{\norm{\sigma_Y}_2 \leq \sqrt{d}} < \eps_d \, \prob[\big]{\norm{\sigma_X}_2 \leq \sqrt{d}}.
\end{align*}
In particular, one can take $\eps_d = 2^{-0.005d}$.
\end{theorem}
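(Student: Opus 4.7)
The plan is to exhibit a specific orthogonal $Y$ whose probability is computed exactly, and to lower-bound $\prob[\big]{\|\sigma_X\|_2 \leq \sqrt d}$ uniformly over every simplicial $X$ via a Gaussian approximation combined with an isotropic small-ball inequality; the resulting ratio decays exponentially in $d$.

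First, I would take $Y$ to consist of $k$ copies of each vector $e_1, \dotsc, e_d$ of an orthonormal basis, where $k$ is odd and as close to $n/d$ as possible (the remaining $O(d)$ vectors can be absorbed without affecting the asymptotics). Then $\sigma_Y = (S_1, \dotsc, S_d)$, where the $S_j$ are independent $k$-step simple random walks taking odd integer values. The constraint $\sum_j S_j^2 \leq d$ forces $|S_j| = 1$ for every $j$, whence the one-dimensional local central limit theorem gives
\[
\prob[\big]{\|\sigma_Y\|_2 \leq \sqrt d} = \prod_{j=1}^d \prob[\big]{|S_j|=1} = (1+o_n(1)) \p[\Big]{\frac{8d}{\pi n}}^{d/2}.
\]

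For the simplicial lower bound, let $X$ have multiplicities $m_1, \dotsc, m_{d+1}$ on the simplex vertices $w_1, \dotsc, w_{d+1}$. Using the tight-frame identity $\sum_i w_i w_i^{\top} = \tfrac{d+1}{d} I_d$, the covariance $\Sigma_X = \sum_i m_i w_i w_i^{\top}$ of $\sigma_X$ satisfies $\operatorname{tr}(\Sigma_X) = n$. By the multivariate local central limit theorem, for fixed $d$ and $n \to \infty$,
\[
\prob[\big]{\|\sigma_X\|_2 \leq \sqrt d} = (1+o_n(1))\prob[\big]{\|G_X\|_2 \leq \sqrt d}, \qquad G_X \sim \cN(0,\Sigma_X).
\]
Since the ball radius $\sqrt d$ is much smaller than the typical standard deviation $\sqrt{n/d}$, the Gaussian small-ball probability is well approximated by the density at zero times the ball volume, $V_d d^{d/2}(2\pi)^{-d/2}(\det\Sigma_X)^{-1/2}$. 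The AM-GM inequality gives $\det\Sigma_X \leq (\operatorname{tr}\Sigma_X/d)^d = (n/d)^d$, with equality precisely at the balanced isotropic case $m_i = n/(d+1)$. Hence
\[
\prob[\big]{\|\sigma_X\|_2 \leq \sqrt d} \geq (1-o_n(1))\frac{V_d\, d^d}{(2\pi n)^{d/2}}.
\]
Configurations with some $m_i = 0$ are degenerate and reduce to a lower-dimensional small-ball problem, for which the probability is only larger (by polynomial factors in $n$), so the bound is universal over $X$ of simplicial type.

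Combining the two estimates and using Stirling's formula $V_d \sim (2\pi e/d)^{d/2}/\sqrt{\pi d}$, one obtains
\[
\frac{\prob[\big]{\|\sigma_Y\|_2 \leq \sqrt d}}{\prob[\big]{\|\sigma_X\|_2 \leq \sqrt d}} \leq (1+o_n(1))\sqrt{\pi d}\,\p[\Big]{\frac{8}{\pi e}}^{d/2}.
\]
Since $8/(\pi e) < 1$, the right-hand side decays exponentially in $d$; a direct comparison of exponents shows it is at most $2^{-0.005d}$ once $d$ exceeds some absolute constant $d_0$, giving $\eps_d = 2^{-0.005d}$. The hard part is justifying the local central limit and small-ball approximations \emph{uniformly} over simplicial $X$: the density-at-origin step requires the ball of radius $\sqrt d$ to be small compared to the minimum eigenvalue of $\Sigma_X$, which degrades as the configuration approaches the boundary of the simplex of multiplicities. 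I would handle this by first proving the clean LCLT bound at the balanced configuration, then reducing the general case via the AM-GM / isotropic minimisation argument, and finally disposing of the truly degenerate boundary by a separate dimension-reduction observation (where the probability only grows by factors polynomial in $n$).
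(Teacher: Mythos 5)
Your upper bound for the orthogonal configuration $Y$ is essentially the paper's \Cref{cor:best-orthogonal}: with all multiplicities odd the constraint forces each coordinate to be $\pm 1$, giving exactly $2^d$ lattice points each of probability $(1+o(1))(2d/\pi n)^{1/2}$ per coordinate, i.e.\ $(1+o(1))(8d/\pi n)^{d/2}$ in total (modulo a parity caveat: when $n \not\equiv d \pmod 2$ you cannot make every $m_j$ odd, but the count of admissible lattice points is still at most $2^d$, so this is repairable). The genuine gap is in your lower bound for the simplicial configurations, and it is fatal rather than technical. Note that the balanced simplicial configuration $m_i = n/(d+1)$ has covariance $\Sigma_X = \sum_i m_i w_i w_i^{\top} = \frac{n}{d+1}\cdot\frac{d+1}{d} I_d = \frac{n}{d} I_d$, which is \emph{identical} to the covariance of the balanced orthogonal configuration. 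Consequently $G_X$ and $G_Y$ have the same law, and any argument that passes to the Gaussian and retains only the covariance (density at the origin times ball volume, AM--GM on $\det\Sigma$, etc.) is structurally incapable of producing a ratio other than $1+o(1)$ between the two cases. The exponential gap you compute, $\sqrt{\pi d}\,(8/\pi e)^{d/2} \approx 2^{-0.047d}$, is precisely the discrepancy between the exact lattice-point computation for $Y$ ($2^d$ points) and the volume heuristic applied to $Y$ itself ($V_d d^{d/2}/2^d \approx (\pi e/2)^{d/2} \approx 2^{1.047d}$ ``predicted'' points). In other words, the entire claimed gain comes from treating the two sides of the comparison inconsistently, not from any property of the simplex.

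The step that fails is therefore $\prob{\norm{\sigma_X}_2 \leq \sqrt d} = (1+o(1))\prob{\norm{G_X}_2\le\sqrt d}$ together with the density-times-volume evaluation. The ball of radius $\sqrt d$ is at the scale of the lattice spacing (the admissible values form a shifted copy of a lattice with spacing $2$ in each direction), so the number of lattice points it contains is \emph{not} asymptotic to volume divided by covolume; the error is exponential in $d$, and both $2^{-0.047d}$ and the target $2^{-0.005d}$ are smaller than this error. The actual content of the theorem is an arithmetic comparison of lattice-point counts: the paper shows that the ellipsoid $\sum_{i<j}(x_i-x_j)^2 \le d^2$ with the parity constraints contains at least $2^{1.011d}$ admissible points (\Cref{cl:atoms-in-q} and \Cref{cl:lots-of-atoms}, via an explicit value profile and a ternary entropy estimate), each carrying weight comparable to the per-point weight in the orthogonal case (\Cref{prop:lower-bound-perturbed-franel}), whereas the orthogonal ball contains only $O(2^d)$ points. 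Your proposal contains no substitute for this count, so the lower bound in your third display is unproven, and the conclusion does not follow. (Your dimension-reduction remark for degenerate $m_i$ is reasonable in spirit --- the paper gains a factor $n^{1/8}$ in that regime --- but it does not rescue the main case.)
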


Nevertheless, there are always configurations being better than the orthogonal basis.

\begin{theorem}
\label{thm:MixedIsBetter}
There is a constant $0 < \delta < 1$ such that the following holds.
For every $d \geq 2$ and for sufficiently large $n$, there is set of $n$ vectors $Z$ such that for every set of $n$ vectors $Y$ of orthogonal type, we have
\begin{align*}
    \prob[\big]{\norm{\sigma_Z}_2 \leq \sqrt{d}} < \delta \, \prob[\big]{\norm{\sigma_Y}_2 \leq \sqrt{d}}.
\end{align*}
\end{theorem}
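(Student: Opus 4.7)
The plan is, for each $d \geq 2$, to exhibit a tight-frame configuration $Z$ of $n$ unit vectors whose probability is bounded above by a universal multiplicative constant times that of any orthogonal-type $Y$. A preliminary step is to reduce the problem to comparing $Z$ against the balanced even-parity orthogonal $Y^\ast$ with $n/d$ copies of each standard basis vector: a local central limit theorem combined with AM-GM on the multiplicities $(m_1, \dotsc, m_d)$ shows that $Y^\ast$ (up to a $1 + o(1)$ factor) minimises the probability over all orthogonal-type $Y$. The local CLT then yields
\[
\prob[\big]{\norm{\sigma_{Y^\ast}}_2 \leq \sqrt{d}} = (1+o(1)) \, \kappa_{Y^\ast} N_{Y^\ast} \cdot (2\pi n/d)^{-d/2},
\]
where $\kappa_{Y^\ast} = 2^d$ is the cell volume of $(2\ZZ)^d$ and $N_{Y^\ast} = |(2\ZZ)^d \cap \overline B(0, \sqrt d)|$.

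My main candidate for $Z$ is the regular $(d{+}1)$-simplex construction: let $Z$ consist of $n/(d{+}1)$ copies of each of $d{+}1$ unit vectors $w_1, \dotsc, w_{d+1}$ with $\sum_i w_i = 0$ and $w_i \cdot w_j = -1/d$. This is a tight frame with $\sum_v v v^T = (n/d) I_d$, so the same local CLT gives
\[
\prob[\big]{\norm{\sigma_Z}_2 \leq \sqrt d} = (1+o(1)) \, \kappa_Z N_Z \cdot (2\pi n/d)^{-d/2},
\]
with $\kappa_Z = 2^d (d+1)^{(d-1)/2}/d^{d/2}$ computed from the Gram determinant of the simplex and $N_Z$ counting the lattice points $2 \sum_{i=1}^d a_i w_i$ ($a_i \in \ZZ$) of norm at most $\sqrt d$. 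Since the Gaussian density $(2\pi n/d)^{-d/2}$ is identical for the two tight frames, the probability ratio collapses to the purely discrete quantity $\kappa_Z N_Z / (\kappa_{Y^\ast} N_{Y^\ast})$, which must be bounded by some $\delta < 1$ uniformly in $d$.

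For $d = 2, 3$ the ball $\overline B(0, \sqrt d)$ contains only the origin in both lattices, so the ratio equals the cell-volume ratio $(d+1)^{(d-1)/2}/d^{d/2}$, giving $\sqrt 3/2$ for $d = 2$ (matching Theorem~\ref{thm:BetterConstant}) and $4\sqrt 3 / 9$ for $d = 3$. For larger $d$ one enumerates the nonzero simplex lattice points $2 \sum_{i \in S} w_i$ indexed by subsets $S \subseteq [d+1]$ with $4|S|(d+1-|S|) \leq d^2$, and compares with the integer points in $\overline B(0, \sqrt d / 2)$ on the orthogonal side. The key feature is that $\kappa_Z / \kappa_{Y^\ast}$ decays like $\Theta(\sqrt{e/d})$, giving considerable room to accommodate the extra lattice points that appear as $d$ grows.

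The principal obstacle is that Theorem~\ref{thm:OrthoBetterThanSimplex} guarantees the simplex loses to orthogonal for $d$ sufficiently large, so the above construction must be modified for very large $d$. A natural alternative is a rotated-pair modification of $Y^\ast$: replace one copy of $e_1$ and one of $e_2$ by the orthonormal pair $(e_1 + e_2)/\sqrt 2$ and $(e_1 - e_2)/\sqrt 2$. This preserves $\Sigma = (n/d) I_d$ but effectively shifts the law of $\sigma_Z$ by one of $\pm \sqrt 2 e_1, \pm \sqrt 2 e_2$ with probability $1/4$ each; the shift avoids the densest cluster of orthogonal lattice points near $\partial \overline B(0, \sqrt d)$, and a direct local-CLT calculation yields ratios like $4/9$ already for $d = 4, 5$. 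Interpolating between the simplex construction (for $d$ below some crossover $d_0$) and an appropriately-tuned collection of rotated pairs (for $d \geq d_0$) should furnish the uniform $\delta$; carrying out the lattice-point enumeration in the crossover regime, and verifying that the per-pair $4/9$-type savings survive the accumulation of shifts, is the most delicate technical step.
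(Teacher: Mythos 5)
There is a genuine gap, and it occurs already in your first reduction. You claim that the balanced \emph{all-even} orthogonal configuration $Y^\ast$ minimises $\prob[\big]{\norm{\sigma_Y}_2 \leq \sqrt{d}}$ over orthogonal type up to $1+o(1)$, with $N_{Y^\ast} = \card{(2\ZZ)^d \cap \overline{B}(0,\sqrt{d})}$. AM--GM does handle the multiplicities, but it says nothing about the \emph{parity vector} $h$ of $(m_1,\dotsc,m_d)$, and that is where the real optimisation lies: the probability is asymptotically $\card{\cS_h}\,(2d/\pi n)^{d/2}$, and $\card{\cS_h}$ for the all-even choice equals the number of all-even integer points in $\overline{B}(0,\sqrt{d})$, which grows like $2^{1.06d}/\mathrm{poly}(d)$ (this is exactly the estimate behind \Cref{cl:bound-ftd}). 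The true orthogonal optimum, established in \Cref{prop:optimal-orthogonal}, is attained at a parity vector with exactly $6$ or $7$ even multiplicities and equals $13\cdot 2^{d-6}$ or $15\cdot 2^{d-7}$ — exponentially smaller than your benchmark for large $d$. Beating $Y^\ast$ therefore does not prove the theorem; you must beat the lower bound $(15/2^7)\,2^d\,(2d/\pi n)^{d/2}$ valid for \emph{every} parity vector (\Cref{prop:optimal-orthogonal-ineq} and \Cref{cor:lower-bound-ortho}), and your margin computations (e.g.\ the cell-volume ratio $(d+1)^{(d-1)/2}/d^{d/2}$) are measured against the wrong target.

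The second gap is the construction for large $d$. As you note, the full $(d+1)$-simplex provably loses to orthogonal configurations once $d$ is large (\Cref{thm:OrthoBetterThanSimplex}), so it cannot supply the uniform $\delta$; but the rotated-pair fallback is only sketched, the claimed $4/9$-type savings are unverified, and the shifted supports $\pm\sqrt{2}e_1, \pm\sqrt{2}e_2$ produce irrational translates of the lattice whose point counts in $\overline{B}(0,\sqrt{d})$ you never control — let alone uniformly in $d$ against the correct orthogonal optimum. The paper avoids both problems with a single construction valid for all $d$: glue a planar equilateral triangle (living in the span of $e_1,e_2$, with $\approx 2n/3d$ copies of each vertex) onto an orthogonal frame in the remaining $d-2$ coordinates with carefully tuned parities ($b_3$ even, $b_4,\dotsc,b_d$ odd). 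The event $\norm{\sigma_Z}_2\leq\sqrt{d}$ then factors into independent one- and two-dimensional events, yielding exactly $\tfrac{\sqrt{3}}{16}\,2^d\,(2d/\pi n)^{d/2}$, which undercuts the universal orthogonal lower bound by the fixed factor $4\sqrt{3}/7<1$. Your plan could in principle be repaired by (i) replacing the comparison point $Y^\ast$ with the sharp parity-optimised lower bound, and (ii) localising the non-orthogonal gain to a bounded-dimensional subspace so that it survives as $d\to\infty$ — but both steps are essential and currently absent.
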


When $d \geq 3$, the construction in \Cref{thm:MixedIsBetter} is of \emph{mixed type}, obtained by gluing a low-dimensional simplex to an orthogonal frame.
It is again far from clear whether these constructions are the best possible.
Moreover, as it will be clear from further examples, the best constant in \Cref{thm:beck} may be sensitive to whether $n \equiv d \mod{2}$ or $n \not\equiv d \mod{2}$.

Even the problem of determining which set of vectors $X$ of orthogonal or of simplicial type minimise $\prob[\big]{\norm{\sigma_X}_2 \leq \sqrt{d}}$ is highly non-trivial.
Indeed, they are connected with certain results on counting the number of solutions to high dimensional quadratic Diophantine inequalities with certain parity restrictions.
We are able to fully determine the optimal set of vectors of orthogonal type in \Cref{subsec:orthogonal} and \Cref{app:orthogonal}.
For sets of vectors of simplicial type, we could merely show that they are less efficient than vectors of orthogonal type, as in \Cref{thm:OrthoBetterThanSimplex}.

In \Cref{sec:conclusion} we collect many problems that remain.

\subsection{Related work}

Before delving into the details of our proofs, we would like to highlight the richness of Littlewood--Offord theory, which encompasses several distinct types of problems.
These include `forward' problems, where one seeks upper bounds on the probability that $\sigma_V$ lands within a target set $S$, as were formulated the original problems of the theory.
In this subfield, recent progress has been made on the `polynomial Littlewood--Offord' problem by Meka, Nguyen and Vu~\cite{Meka2016-xf} and by Kwan and Sauermann~\cite{Kwan2023-kl}.
Another branch is concerned with `inverse' problems, which aim to exhibit structural properties of $V$ when $\sigma_V$ is likely to fall in $S$, as explored by Tao and Vu~\cite{Tao2009-aj}.
Lastly, `reverse' problems seek lower bounds on the probability that $\sigma_V$ falls within a target set $S$, with for instance Keller and Klein's resolution of Tomaszewski's conjecture~\cite{Keller2022-ve}, and the work of the first and second authors on Tomaszewski's counterpart problem~\cite{Hollom2023-jc}.

\subsection{Structure}

\Cref{sec:preliminaries} contains some preliminary results and estimates that we will use throughout.
We prove \Cref{thm:approximate} in \Cref{sec:approximate}, \Cref{thm:lower-bound} in \Cref{sec:exponential}.
In \Cref{sec:radius1}, we prove \Cref{thm:construction-small-radius}, which implies \Cref{sec:radius1}.
We deal with the optimal constructions in \Cref{sec:optimal-constructions}, where we show \Cref{thm:BetterConstant,thm:OrthoBetterThanSimplex,thm:MixedIsBetter}.
We conclude with several open problems in \Cref{sec:conclusion}.


\section{Preliminaries}
\label{sec:preliminaries}

One of the tools we will make use of is the following pairing result proved by He, Ju\v{s}kevi\v{c}ius, Narayanan, and Spiro~\cite{He2024-cp}.

\begin{proposition}
\label{prop:hjns}
Let $v_1, \dotsc, v_{2n} \in \RR^d$ be unit vectors and $r, \alpha > 0$ be real numbers such that
\begin{equation*}
    r^2 \geq \alpha + \sum_{i = 1}^{n}\norm{v_{2i} - v_{2i-1}}_2^2.
\end{equation*}
Then for independent Rademacher variables $\eps_1, \dotsc, \eps_{2n}$, we have
\begin{equation}
\label{eq:hjns}
    \prob[\big]{\norm{\eps_1 v_1 + \dotsb + \eps_{2n} v_{2n}}_2 \leq r} \geq \frac{c}{n^{d/2}},
\end{equation}
where $c = c_{d,r,\alpha} > 0$ is a constant that depends only on $d$, $r$ and $\alpha$.
\end{proposition}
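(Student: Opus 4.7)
The plan is to combine the pairing trick with an application of Beck's theorem (\Cref{thm:beck}) to the ``large'' part and a Chebyshev tail bound to the ``small'' part, exploiting the hypothesis $\sum_i \norm{v_{2i}-v_{2i-1}}_2^2 \leq r^2 - \alpha$ to squeeze the radius from $\sqrt{d}$ down to the prescribed $r$.

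First, I would reparametrise using the jointly independent Rademacher variables $\eta_i \defined \eps_{2i-1}$ and $\delta_i \defined \eps_{2i-1}\eps_{2i}$, so that
\begin{equation*}
\sigma_V = \sum_{i=1}^n \eta_i\bigl(v_{2i-1}+\delta_i v_{2i}\bigr) = S_+(\eta,\delta) + S_-(\eta,\delta),
\end{equation*}
where $S_+ \defined \sum_{i \colon \delta_i=+1}\eta_i(v_{2i-1}+v_{2i})$ and $S_- \defined \sum_{i \colon \delta_i=-1}\eta_i(v_{2i-1}-v_{2i})$. Conditional on $\delta$, the two sums are independent since they involve disjoint subsets of the $\eta_i$'s. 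The hypothesis gives $\expec{\norm{S_-}_2^2 \given \delta} \leq r^2 - \alpha$ uniformly in $\delta$, and Chebyshev's inequality yields the $\delta$-free lower bound
\begin{equation*}
\prob[\big]{\norm{S_-}_2 \leq \sqrt{r^2-\alpha/2} \given \delta} \geq \frac{\alpha/2}{r^2-\alpha/2}.
\end{equation*}

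The decisive step is then to prove that $\prob[\big]{\norm{S_+}_2 \leq r - \sqrt{r^2-\alpha/2} \given \delta} \geq c/n^{d/2}$ for a positive fraction of the $\delta$'s. Since the vectors $(v_{2i-1}+v_{2i})/2$ have norm at most $1$, the scaled form of Beck's theorem gives such a bound, but with radius $2\sqrt{d}$ rather than the prescribed (possibly much smaller) $r - \sqrt{r^2-\alpha/2}$. To obtain the sharp radius, one refines via Fourier inversion of the characteristic function
\begin{equation*}
\phi(t) = \prod_{i=1}^n \frac{\cos\inner{t}{v_{2i-1}+v_{2i}} + \cos\inner{t}{v_{2i-1}-v_{2i}}}{2}
\end{equation*}
against a smooth non-negative bump supported in the ball of radius $r$ about the origin. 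The smallness of $\sum_i \norm{v_{2i}-v_{2i-1}}_2^2$ forces the high-frequency tail of $\phi$ to contribute $o(n^{-d/2})$, so that the Gaussian-like low-frequency region dominates and delivers a main term of order $n^{-d/2}$. Degenerate configurations, where the $v_k$'s span only a proper subspace $W \subsetneq \RR^d$, are reduced to the full-rank case on $W$ by induction on the dimension, which in fact yields a stronger bound $c/n^{(\dim W)/2}$.

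Combining the two estimates using the conditional independence of $S_+$ and $S_-$ given $\delta$, and then averaging over $\delta$, yields the claimed inequality. The main obstacle is the refined local central limit step: pushing the radius from Beck's native $\sqrt{d}$ down to $r$, and making the Fourier inversion rigorous despite potential arithmetic structure in the $v_k$'s, where $\phi$ may have recurrences along a lattice and the local CLT is most delicate.
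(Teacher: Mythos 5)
The paper itself does not prove this proposition; it is imported verbatim from~\cite{He2024-cp}, so your argument has to stand on its own. Your skeleton is the right one: the reparametrisation by $(\eta_i,\delta_i)$, the conditional independence of $S_+$ and $S_-$, and the Markov bound $\prob{\norm{S_-}_2^2 \leq r^2-\alpha/2 \given \delta} \geq (\alpha/2)/(r^2-\alpha/2)$, which is uniform in $\delta$, are all correct. Because that last bound is uniform, you only ever need the $S_+$ estimate \emph{on average} over $\delta$; your ``for a positive fraction of the $\delta$'s'' is both more than you need and more than is true pointwise (for fixed $\delta$ the probability is exactly $0$ whenever the relevant parities are wrong, e.g.\ all $v_i$ equal and $\card{\set{i \st \delta_i = 1}}$ odd).

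The genuine gap is the decisive step itself. You need $\prob{\norm{S_+}_2 \leq \rho}\geq c/n^{d/2}$ at the fixed radius $\rho = r - \sqrt{r^2-\alpha/2}$, which can be far below the critical radius $2\sqrt d$ for the vectors $w_i = v_{2i-1}+v_{2i}$ (these have norm close to $2$). A lower bound of order $n^{-d/2}$ at a sub-critical radius is exactly the kind of statement this paper shows to be \emph{false} in general (\Cref{thm:radius1}, and the even orthogonal counterexample), so it cannot follow from generic local-CLT reasoning, and your stated justification --- that the smallness of $\sum_i\norm{v_{2i}-v_{2i-1}}_2^2$ makes the high-frequency tail of $\phi$ contribute $o(n^{-d/2})$ --- is false: for $v_1=\dotsb=v_{2n}=e_1$ in $d=1$ the pairing sum is $0$, yet $\phi(t)=\cos^{2n}(t)$ has non-decaying peaks along $\pi\ZZ$, and those peaks are precisely what confine $\sigma$ to the lattice $2\ZZ$. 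Moreover the $\phi$ you wrote down is the characteristic function of the full sum $\sigma$, not of $S_+$, and it is not sign-definite, so positivity is not available for it. You then list this step as ``the main obstacle'' rather than resolving it, which leaves the proof incomplete. The missing idea is symmetrisation: since $S_+ = \tfrac12\sum_i(\eta_i+\eta_i\delta_i)w_i$ and $(\eta_i\delta_i)_i$ is a Rademacher sequence independent of $(\eta_i)_i$, the unconditioned $2S_+$ is a sum of two \emph{independent} copies of $\sum_i\eta_iw_i$, so its characteristic function is $\prod_i\cos^2\inner{t}{w_i}\geq 0$. Testing against a kernel of the form $h\ast h$ supported in the ball of radius $2\rho$ about the origin (so that its Fourier transform is also non-negative), every frequency contributes non-negatively, the problematic high frequencies can simply be discarded, and the region $\norm{t}_2\leq\eps$ alone gives $\int_{\norm{t}_2\leq\eps}\prod_i\cos^2\inner{t}{w_i}\dt\gtrsim n^{-d/2}$. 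With that step supplied, the rest of your architecture does combine to give the claim.
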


The binary entropy function is defined as
\begin{equation*}
    H(p) = - p \log p - (1-p) \log(1-p).
\end{equation*}
Throughout this paper, we make constant use of the following form of Stirling's approximation, which follows from Robbins~\cite{Robbins1955-fv}.

\begin{proposition}
\label{fact:stirling}
The following approximation holds as $n\to\infty$.
\begin{equation*}
    n! = \p[\big]{1 + O(1/n)}\sqrt{2 \pi n} \p[\Big]{\frac{n}{e}}^n.
\end{equation*}
Moreover, if $\min\set{t, n - t} \to \infty$, then
\begin{equation}
\label{eq:binom-approx}
    \binom{n}{t} = \p[\big]{1 + O(1/\min\set{t, n - t})} \sqrt{\frac{n}{2\pi t(n-t)}} \; 2^{n H(t/n)}.
\end{equation}
\end{proposition}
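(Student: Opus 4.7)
The plan is to derive both estimates as consequences of Robbins' sharp form of Stirling's formula, which asserts that for every integer $n \geq 1$,
\[
n! = \sqrt{2\pi n}\left(\frac{n}{e}\right)^n \exp(r_n), \qquad \frac{1}{12n+1} < r_n < \frac{1}{12n}.
\]
For the first identity, I would simply observe that $r_n = O(1/n)$, so $\exp(r_n) = 1 + r_n + O(r_n^2) = 1 + O(1/n)$; substituting this into Robbins' formula yields $n! = (1+O(1/n))\sqrt{2\pi n}(n/e)^n$ at once.

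For the binomial estimate, my approach would be to substitute Robbins' identity three times into $\binom{n}{t} = n!/(t!\,(n-t)!)$ and then group the outcome into three natural packages. First, the $\sqrt{2\pi}$-prefactors combine into exactly $\sqrt{n/(2\pi t(n-t))}$. Second, writing $m = \min\{t, n-t\}$, the exponential contribution becomes $\exp(r_n - r_t - r_{n-t})$; since each $r_k = O(1/k)$ this exponent is $O(1/m)$, and under the hypothesis $m \to \infty$ the whole expression collapses into a multiplicative factor $1 + O(1/m)$. Third, the pure power contribution $n^n/(t^t(n-t)^{n-t})$ is precisely $2^{nH(t/n)}$, as one verifies by taking base-$2$ logarithms: the left side has $\log_2$ equal to $n \log_2 n - t \log_2 t - (n-t)\log_2 (n-t) = -t\log_2(t/n) - (n-t)\log_2((n-t)/n) = nH(t/n)$, consistent with interpreting $\log$ in the definition of $H$ as base $2$, which is forced by the notation $2^{nH(p)}$.

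The whole argument is essentially bookkeeping, so I do not anticipate a serious obstacle; the only step that requires any care is the consolidation of the three error contributions into a single $1 + O(1/m)$ factor, where one must observe that the dominant terms come from $r_t$ and $r_{n-t}$ rather than from the much smaller $r_n = O(1/n)$, and that the inequality $1/n \leq 1/m$ makes the resulting bound clean.
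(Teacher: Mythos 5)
Your derivation is correct and matches the paper's intent exactly: the paper gives no explicit proof, stating only that the proposition ``follows from Robbins,'' and your argument is precisely that deduction spelled out — Robbins' bounds $1/(12n+1) < r_n < 1/(12n)$ give the first estimate immediately, and the three-fold substitution with the base-$2$ entropy identification and the $O(1/\min\{t,n-t\})$ consolidation of the error terms gives the second. Nothing is missing.
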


The asymptotic behaviour of the sum of powers of binomial coefficients is also relevant for us.

\begin{proposition}
\label{prop:franel}
For all integers $q \geq 1$, we have, as $m \to \infty$,
\begin{equation*}
    \sum_{k=0}^{m} \binom{m}{k}^{q} = \p[\big]{1 + o(1)} \frac{2^{mq}}{\sqrt{q}} \p[\bigg]{\frac{2}{\pi m}}^{(q-1)/2}.
\end{equation*}
\end{proposition}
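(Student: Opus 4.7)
The plan is to apply Laplace's method to the sum, exploiting that $\binom{m}{k}$ is sharply concentrated at $k = \lfloor m/2 \rfloor$ and, after raising to the $q$-th power, behaves like a Gaussian of standard deviation $\Theta(\sqrt{m/q})$. I would write $k = \lfloor m/2 \rfloor + j$, split the sum at the threshold $T = \lfloor m^{1/2} \log m \rfloor$, and handle the central window $|j| \leq T$ and the tail $|j| > T$ separately.

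For the central window, Taylor-expanding $H$ around $1/2$ --- noting that $H'(1/2) = H'''(1/2) = 0$ and $H''(1/2) = -4/\ln 2$ by symmetry --- gives $m H(1/2 + j/m) = m - 2j^2/(m \ln 2) + O(j^4/m^3)$. Substituting into~\eqref{eq:binom-approx} yields, uniformly for $|j| \leq T$,
\[
\binom{m}{\lfloor m/2 \rfloor + j} = (1 + o(1)) \sqrt{\tfrac{2}{\pi m}} \cdot 2^m \cdot e^{-2 j^2 / m},
\]
since the quartic remainder satisfies $O(T^4/m^3) = O(\log^4 m / m) = o(1)$ after exponentiation. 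Raising to the $q$-th power and performing a standard Riemann-sum comparison with the corresponding Gaussian integral,
\[
\sum_{|j| \leq T} e^{-2 q j^2 / m} = (1 + o(1)) \int_{-\infty}^{\infty} e^{-2 q x^2 / m} \, dx = (1 + o(1)) \sqrt{\tfrac{\pi m}{2 q}},
\]
yields, after combination and simplification, the claimed main term $(1 + o(1)) \frac{2^{m q}}{\sqrt{q}} \left( \frac{2}{\pi m} \right)^{(q - 1)/2}$.

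For the tail $|j| > T$, Hoeffding's inequality gives $\sum_{|j| > T} \binom{m}{\lfloor m/2 \rfloor + j} \leq 2^{m+1} e^{-2 T^2 / m} = 2^{m+1} e^{-2 \log^2 m}$. Combined with the pointwise bound $\binom{m}{k} \leq \binom{m}{\lfloor m/2 \rfloor} = O(2^m / \sqrt{m})$ and the factorisation $\binom{m}{k}^q = \binom{m}{k}^{q-1} \cdot \binom{m}{k}$, this bounds the tail contribution by $O\bigl( 2^{m q} m^{-(q-1)/2} e^{-2 \log^2 m} \bigr)$, which is super-polynomially smaller than the main term. The principal technical difficulty is maintaining uniform control of the $(1+o(1))$ relative error through both the Taylor expansion and the passage from Riemann sum to integral; the threshold $T = m^{1/2} \log m$ is chosen precisely to make the quartic Taylor remainder $o(1)$ while leaving the Gaussian tails super-polynomially small.
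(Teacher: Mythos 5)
Your proof is correct. The paper does not prove this proposition directly (it cites Pólya and Szegö and Farmer--Leth), but it establishes the more general \Cref{prop:perturbed-franel-2} in \Cref{app:binom} by essentially the same Laplace-method argument you use --- a central window of width roughly $m^{1/2}\log m$ around $k=m/2$, an entropy expansion via Stirling giving the Gaussian factor $e^{-2qj^2/m}$, a Riemann-sum-to-integral comparison, and a superpolynomially small tail --- so your argument is a correct, self-contained proof of the special case along the same lines as the paper's treatment of the generalisation.
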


This result appears in the problem book of Pólya and Szegö~\cite[Part II, Problem 40]{Polya1925-tf}, see Farmer and Leth~\cite{Farmer2004-kc} for a stand-alone proof of \Cref{prop:franel}.
When $q = 3$, these quantities are know as the \emph{Franel numbers} (see~\cite[A000172]{oeis}).
In fact, for our applications, we will need to control the behaviour of a more general sum of products of binomial coefficients.

\begin{restatable}{proposition}{perturbedfranel}
\label{prop:perturbed-franel-2}
Fix integer $q \geq 1$ and let $x_1, \dotsc, x_q \in \ZZ$ and $m_1, \dotsc, m_q \in \NN$ be such that $m_i \equiv x_i \mod{2}$ for all $1 \leq i \leq q$.
Furthermore, write $X \defined \max\set{\abs{x_1}, \dotsc, \abs{x_q}}$ and $\ell \defined \min\set{m_1, \dotsc, m_q}$.
For any fixed $0 < \eps < 1/2$, if $X = o(\ell^{1/2 - \eps})$ as $\ell \to \infty$, then we have
\begin{equation*}
    \sum_{k \in \ZZ} \, \prod_{i=1}^{q} \binom{m_i}{(m_i+x_i)/2 + k}
    = \p[\Big]{1 + O_\eps\p[\Big]{\frac{X}{\ell^{1/2-\eps}}}} 2^{m_1 + \dotsb + m_q} \sqrt{\frac{\p{2/\pi}^{q-1}}{ \p[\big]{\prod_{i=1}^q m_i} \p[\big]{ \sum_{i=1}^q m_i^{-1}}}}.
\end{equation*}
\end{restatable}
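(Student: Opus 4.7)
The plan is to estimate each binomial via the refined Stirling approximation \eqref{eq:binom-approx}, reducing the sum to a discrete Gaussian that can be evaluated by completing the square and applying Poisson summation.

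Write $t_i \defined (x_i + 2k)/2$, so that $(m_i + x_i)/2 + k = m_i/2 + t_i$. I would first localise the sum to $\abs{k} \le K \defined \ell^{1/2 + \eps/2}$: for $\abs{k} > K$, the crude bound $\binom{m_i}{m_i/2 + t_i} \le \binom{m_i}{\lfloor m_i/2 \rfloor}\exp(-c t_i^2/m_i)$, valid for an absolute $c > 0$, shows this tail contributes at most $\exp\p[\big]{-\Omega(\ell^\eps)} \cdot 2^{m_1 + \dotsb + m_q}$, which is negligible compared to the claimed asymptotic. On the main range $\abs{k} \le K$, one has $\abs{t_i} \ll \ell^{1/2 + \eps/2}$ by the hypothesis $X = o(\ell^{1/2-\eps})$, so \eqref{eq:binom-approx} combined with the Taylor expansion $H(\tfrac12 + u) = 1 - 2u^2/\ln 2 + O(u^4)$ of the binary entropy yields
\begin{equation*}
    \binom{m_i}{m_i/2 + t_i} = 2^{m_i}\sqrt{\tfrac{2}{\pi m_i}}\exp\p[\big]{-2 t_i^2/m_i}\p[\big]{1 + O_\eps\p{\ell^{-1+2\eps}}}.
\end{equation*}

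Substituting this into the product and summing, the main contribution becomes
\begin{equation*}
    2^{m_1 + \dotsb + m_q}\p[\Big]{\tfrac{2}{\pi}}^{q/2}\frac{1}{\sqrt{\prod_i m_i}}\sum_{\abs{k} \le K}\exp\p[\bigg]{-\sum_{i=1}^q \frac{(x_i + 2k)^2}{2m_i}}
\end{equation*}
up to a multiplicative factor $1 + O_\eps(\ell^{-1+2\eps})$. Setting $A \defined \sum_i m_i^{-1}$, $B \defined \sum_i x_i/m_i$ and $C \defined \sum_i x_i^2/m_i$, the exponent rewrites as $-(2Ak^2 + 2Bk + C/2)$. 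Completing the square in $k$ and extending the sum to $\ZZ$ at exponentially small cost, Poisson summation gives
\begin{equation*}
    \sum_{k \in \ZZ}\exp\p[\big]{-(2Ak^2 + 2Bk + C/2)} = \exp\p[\big]{B^2/(2A) - C/2}\sqrt{\pi/(2A)}\p[\big]{1 + O\p{e^{-\pi^2 \ell/(2q)}}}.
\end{equation*}
By the Cauchy--Schwarz inequality, $B^2 \le AC$, and $\abs{C/2 - B^2/(2A)} \le C/2 \le X^2 A/2 = O(X^2/\ell)$, so $\exp(B^2/(2A) - C/2) = 1 + O(X^2/\ell)$. Rearranging, the $q$ factors $\sqrt{2/(\pi m_i)}$ together with $\sqrt{\pi/(2A)}$ collapse into $\sqrt{(2/\pi)^{q-1}/(A\prod_i m_i)}$, which matches the stated expression.

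The main obstacle is combining the various error contributions—the per-binomial Stirling error $O_\eps(\ell^{-1+2\eps})$, the exponentially small Poisson correction, and the shift factor $O(X^2/\ell)$—into the uniform $O_\eps(X/\ell^{1/2-\eps})$ bound claimed in the proposition. Provided $X \ge 1$ (the case $X = 0$ being \Cref{prop:franel}) and $X \le \ell^{1/2-\eps}$, both $\ell^{-1+2\eps}$ and $X^2/\ell$ are dominated by $X/\ell^{1/2-\eps}$, yielding the stated bound. A secondary subtlety is that the $m_i$ may differ widely while only $\ell = \min_i m_i$ is controlled; this is handled by observing that $A \ge 1/\ell$ forces the effective Gaussian width $1/\sqrt{2A}$ to be at most $\sqrt{\ell/2}$, so the dominant values of $\abs{k}$ never exceed $\sqrt{\ell}$ and the Stirling remainder $t_i^4/m_i^3 \le \ell^{-1+2\eps}$ holds uniformly in $i$.
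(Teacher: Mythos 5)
Your proposal is correct and follows essentially the same route as the paper: split the sum at $|k| \approx \ell^{1/2+\eps}$, approximate each binomial via Stirling and the quadratic expansion of the binary entropy, and evaluate the resulting discrete Gaussian against $\sqrt{\pi/(2A)}$ with $A = \sum_i m_i^{-1}$. The only substantive difference is cosmetic: you keep the full shift $(x_i+2k)^2/(2m_i)$ in the exponent and dispose of it by completing the square plus Cauchy--Schwarz (giving the factor $1+O(X^2/\ell)$), whereas the paper absorbs the cross term into a multiplicative $1+O(kX/m_i)$ error per binomial; both yield the stated $O_\eps(X/\ell^{1/2-\eps})$.

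One point to tighten in the tail estimate: the summary ``at most $\exp(-\Omega(\ell^\eps))\cdot 2^{m_1+\dotsb+m_q}$, which is negligible'' is not quite enough, because the main term is of order $2^{m_1+\dotsb+m_q}(RP)^{-1/2}$ with $P = \prod_i m_i$, and the $m_i$ other than the minimal one are unconstrained, so $(RP)^{-1/2}$ may be far smaller than $\exp(-\Omega(\ell^\eps))$. However, the bound you actually invoke, with the central binomial coefficients $\prod_i \binom{m_i}{\floor{m_i/2}} = O\p[\big]{\sqrt{R}\,2^{\sum_i m_i}(2/\pi)^{q/2}P^{-1/2}}$ in place of $2^{\sum_i m_i}$, does make the tail $O(e^{-\Omega(\ell^\eps)})$ \emph{relative} to the main term, so the argument goes through once you carry that factor along. (The paper instead notes that only $O(\ell)$ values of $k$ contribute and bounds each tail term by the value at $k = \ell^{1/2+\eps}$.)
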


The proof of \Cref{prop:perturbed-franel-2} is rather technical, and is therefore deferred to \Cref{app:binom}.

We will also need the following lower bound for the sum of products of binomial coefficients, which holds for any value of $\ell = \min\set{m_1, \dotsc, m_d}$ (as opposed to $\ell \rightarrow \infty$ for \Cref{prop:perturbed-franel-2}).

\begin{restatable}{proposition}{lbperturbedfranel}
\label{prop:lower-bound-perturbed-franel}
Fix integers $q, C \geq 1$ and $x_1,\dotsc,x_q\in\ZZ$, and write $X \defined \max\set{\abs{x_1}, \dotsc, \abs{x_q}}$.
Then there exists $n_0 = n_0(q,C,X)$ such that, for every $n \geq n_0$, the following holds.
If $m_1, \dotsc, m_q \geq 0$ are integers such that $m_1 + \dotsb + m_q = n$ and, for all $1 \leq i \leq q$ we have $m_i \geq \abs{x_i}$ and $m_i \equiv x_i \mod 2$, then
\begin{equation*}
    \sum_{k \in \ZZ} \, \prod_{i=1}^{q} \binom{m_i}{(m_i+x_i)/2 + k}
    \geq 2^{n-1} \p[\bigg]{\frac{2}{\pi n}}^{(q-1)/2} q^{(q-2)/2}.
\end{equation*}
Moreover, if there are distinct $i$ and $j$ such that $m_i \leq C$ and $m_j \leq C$, then in fact
\begin{equation*}
    \sum_{k \in \ZZ} \, \prod_{i=1}^{q} \binom{m_i}{(m_i+x_i)/2 + k}
    \geq 2^{n-1} \p[\bigg]{\frac{2}{\pi n}}^{(q-1)/2} q^{(q-2)/2} \, n^{1/8}.
\end{equation*}
\end{restatable}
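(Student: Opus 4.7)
The plan is a dichotomy based on the minimum value $\ell = \min_i m_i$. When $\ell$ is sufficiently large (depending on $q$ and $X$) so that the multiplicative error in \Cref{prop:perturbed-franel-2} is at most $1/2$, that result gives the sum is at least
\[
\tfrac12 \cdot 2^n \sqrt{\frac{(2/\pi)^{q-1}}{(\prod_i m_i)(\sum_i 1/m_i)}}.
\]
The key algebraic identity is $(\prod_i m_i)(\sum_i 1/m_i) = \sum_i \prod_{j \neq i} m_j = e_{q-1}(m_1, \dotsc, m_q)$, the $(q-1)$-st elementary symmetric polynomial. By Maclaurin's inequality applied to the nonnegative reals $m_1, \dotsc, m_q$ with $\sum_i m_i = n$, we have $e_{q-1}(m) \leq n^{q-1}/q^{q-2}$, with equality at $m_i = n/q$. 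Substituting yields precisely the main bound.

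When some $m_i$ is bounded, let $R = \set{i : m_i \leq M_0}$ for a large constant $M_0 = M_0(q, C, X) \geq C$. The easier subcase is $|R| \geq 2$: restricting the sum to $k = 0$ gives at least $\prod_i \binom{m_i}{(m_i+x_i)/2}$, which is at least $\prod_{i \notin R}\binom{m_i}{(m_i+x_i)/2}$ since each factor is $\geq 1$ by the hypotheses $m_i \geq \abs{x_i}$ and $m_i \equiv x_i \mod{2}$. Applying \Cref{fact:stirling} and AM-GM on the complement of $R$ produces a lower bound exceeding the main bound by a factor of order $\Omega(n^{(|R|-1)/2}) \geq \Omega(n^{1/2})$, which is much stronger than the $n^{1/8}$ required in the moreover part. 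Since the moreover hypothesis that $m_i, m_j \leq C \leq M_0$ for distinct $i, j$ forces $|R| \geq 2$, this case is complete. The subcase $|R| = 1$, say $R = \set{j}$, requires the absorption identity $\sum_k \binom{m_j}{(m_j+x_j)/2+k} = 2^{m_j}$: combined with \Cref{fact:stirling} applied to the large binomials, the AM-GM bound $\prod_{i \neq j}m_i \leq (n/(q-1))^{q-1}$, and the elementary inequality $q^{q-2} \leq (q-1)^{q-1}$ (valid for all $q \geq 2$), one recovers the main bound.

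The principal obstacle is the interface between the asymptotic and bounded regimes in the $|R| = 1$ subcase: when some $m_i$ with $i \neq j$ is only of intermediate size (exceeding $M_0$ but not growing with $n$), the ratios $\binom{m_i}{(m_i+x_i)/2+k}/\binom{m_i}{(m_i+x_i)/2}$ are not uniformly close to $1$ over the range $|k| \leq m_j$ required by the absorption step. Handling this requires a more careful combinatorial argument, for instance iterating Vandermonde-style identities on pairs of bounded-size binomials to collapse them before applying the asymptotic formulas; the overall structure of the dichotomy nevertheless remains valid, and the required bound follows.
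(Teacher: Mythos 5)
Your overall architecture — handle $\ell = \min_i m_i$ large via \Cref{prop:perturbed-franel-2} together with the symmetric-function bound $\big(\prod_i m_i\big)\big(\sum_i m_i^{-1}\big) \leq n^{q-1}/q^{q-2}$, handle two small $m_i$ by restricting to $k=0$, and handle one small $m_j$ by the absorption identity $\sum_k \binom{m_j}{(m_j+x_j)/2+k} = 2^{m_j}$ — is exactly the paper's, and your first two cases are sound (the symmetric-function inequality is the paper's \Cref{prop:all-m-equal}, and your $|R|\geq 2$ case with polynomial gain $n^{(|R|-1)/2}$ mirrors the paper's Case~(i)). However, the $|R|=1$ subcase is a genuine gap, and the one you yourself flag. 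The obstacle is not merely technical: with a single constant threshold $M_0$, restricting to $k=0$ loses a factor $2^{-m_j}$ with no compensating polynomial gain, so the absorption over $k$ is mandatory; but absorption requires $\binom{m_i}{(m_i+x_i)/2+k}$ to be within a factor $1+o(1)$ of its central value uniformly over $|k|\leq m_j$, which needs $m_j^2 = o(m_i)$ for all $i\neq j$ — false when some $m_i$ is merely another constant above $M_0$. Your proposed repair via Vandermonde does not obviously work either: for $q\geq 3$ the convolution $\sum_k \prod_i \binom{m_i}{c_i+k}$, with all shifts in the same direction, does not collapse under the Vandermonde identity, and introducing a second larger constant threshold just regenerates the same interface problem one level up.

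The paper closes this gap with nested \emph{growing} thresholds rather than a constant one. Setting $g(n)=\loglog n$, it splits on whether $m_1 \geq g(n)$ (your first case), whether $m_1 \leq g(n)$ and $m_2 \leq g(n)^3$ (your $|R|\geq 2$ case: the loss $2^{-q\,g(n)^3} = n^{-o(1)}$ is swallowed by the gain $n^{1/2}$, which also yields the extra $n^{1/8}$ for the moreover part), or $m_1 \leq g(n)$ and $m_2 \geq g(n)^3$. In the last case the cubic gap between the two thresholds guarantees $k^2/m_i \leq g(n)^2/g(n)^3 \to 0$ for all $|k|\leq m_1$ and $i\geq 2$, so absorption costs only a $1+o(1)$ factor, and the strict inequality $(q-1)^{q-1} > q^{q-2}$ supplies the remaining constant room. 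Replacing your fixed $M_0$ by these two growing thresholds is the missing idea; with that substitution your argument becomes the paper's proof.
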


We will also make use of the following inequality.

\begin{restatable}{proposition}{lbprodbinomial}
\label{prop:lower-bound-prod-binomial}
For every integers $q \geq 1$, $m_1, \dotsc, m_q \geq 0$ and $n$ such that $m_1 + \dotsb + m_q = n$, we have, as $n \to \infty$, that
\begin{align*}
    \frac{1}{2^n} \prod_{i=1}^q \binom{m_i}{(m_i + x_i)/2} \geq \p[\big]{1 + o(1)} \p[\bigg]{\frac{2q}{\pi n}}^{q/2}.
\end{align*}
\end{restatable}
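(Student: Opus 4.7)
My plan is to prove the bound by induction on $q \geq 1$. The base case $q = 1$ is immediate: with $m_1 = n$, Stirling's approximation (\Cref{fact:stirling}, equation \eqref{eq:binom-approx}) gives $\binom{n}{(n+x_1)/2} = (1+o(1)) \sqrt{2/(\pi n)}\, 2^n$, which is exactly $(1+o(1))\, 2^n (2/(\pi n))^{1/2}$, as claimed.

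For the inductive step, fix $q \geq 2$ and assume the bound holds for $q - 1$. Consider a composition $m_1 + \dotsb + m_q = n$, and distinguish two scenarios according to the behaviour of $\min_i m_i$ as $n \to \infty$. In the principal case, where $\min_i m_i \to \infty$, I apply the Stirling estimate \eqref{eq:binom-approx} to every factor --- legitimate because each $x_i$ is fixed --- to obtain
\begin{equation*}
    \binom{m_i}{(m_i + x_i)/2} = (1+o(1)) \sqrt{\frac{2}{\pi m_i}}\, 2^{m_i}.
\end{equation*}
Taking the product and invoking the AM-GM inequality in the form $\prod_i m_i \leq (n/q)^q$, the product is bounded below by $(1+o(1))\, 2^n (2/\pi)^{q/2} (q/n)^{q/2} = (1+o(1))\, 2^n (2q/(\pi n))^{q/2}$, with equality attained asymptotically when $m_i = n/q$ for all $i$.

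In the degenerate case, after passing to a subsequence some index $j$ satisfies $m_j \leq M$ for a fixed constant $M$. Then $\binom{m_j}{(m_j + x_j)/2} \geq 1$, and the inductive hypothesis applied to the remaining $q-1$ binomials (whose exponents sum to $n' = n - m_j = (1 - o(1))n$) supplies the lower bound $(1+o(1))\, 2^{n'} (2(q-1)/(\pi n'))^{(q-1)/2}$. Multiplying through, the full product is bounded below by an expression of order $2^n (2(q-1)/(\pi n))^{(q-1)/2}$, which exceeds the target $2^n (2q/(\pi n))^{q/2}$ by a factor of $\Theta(\sqrt{n}) \to \infty$, so the inequality holds with substantial slack. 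The main obstacle is precisely this degenerate case --- where Stirling fails and one must handle possibly small $m_i$'s --- which the induction cleanly resolves: dropping a bounded coordinate strictly improves the target exponent of $n^{-1}$ from $q/2$ to $(q-1)/2$, and the resulting polynomial savings dwarf any constant-factor loss from the dropped factor.
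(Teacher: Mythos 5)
Your proof is correct, and it reaches the bound by a somewhat different route than the paper. The paper argues directly: it orders the $m_i$, introduces a slowly growing threshold $g(n) = \loglog n$, bounds each binomial with $m_i \leq g(n)$ below by $1$, applies Stirling to the remaining $q-t$ factors, and checks that the resulting loss of $2^{-t g(n)} = (\log n)^{-t}$ is swamped by the polynomial gain of order $n^{t/2}$ coming from the smaller exponent $(q-t)/2$. You instead induct on $q$ and split according to whether $\min_i m_i \to \infty$ (Stirling applied to every factor plus AM--GM, which is exactly the paper's main case and its \Cref{prop:all-m-equal}) or some $m_j$ stays bounded along a subsequence (drop that factor, invoke the inductive hypothesis on the remaining $q-1$ coordinates, and win a factor $\Theta(\sqrt{n})$ against the constant loss $2^{-m_j}$). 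The underlying mechanism --- a binomial coefficient with bounded $m_i$ still contributes at least $1$, and removing a coordinate improves the power of $n$ by $1/2$ --- is the same in both arguments; your induction avoids the paper's $\loglog n$ threshold and the associated bookkeeping, at the price of a compactness/subsequence step needed to make the uniformity of the $o(1)$ over all compositions precise. One small point worth making explicit in your case (ii), which the paper's proof shares: the bound $\binom{m_j}{(m_j+x_j)/2} \geq 1$ silently uses $\abs{x_j} \leq m_j$ and $m_j \equiv x_j \mod{2}$, the implicit standing hypothesis on the (undeclared) integers $x_i$ in the statement.
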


The proofs of \Cref{prop:lower-bound-perturbed-franel,prop:lower-bound-prod-binomial} are rather technical and deferred to \Cref{app:binom}.


\section{An approximate version of Erdős' conjecture}
\label{sec:approximate}

Our goal in this section is to give a proof of \Cref{thm:approximate}, which amounts to showing that for any $\delta > 0$, there is a constant $c_\delta > 0$ such that any sequence of unit vectors $v_1, \dotsc, v_n \in \RR^2$ with $n$ odd satisfies $\prob{ \norm[\big]{\eps_1 v_1 + \cdots + \eps_n v_n}_2 \leq 1 + \delta } \geq c_\delta/n$.

We say that the vectors $v_1, \dotsc, v_{2k+1} \in \RR^2$ are in \emph{standard form} if we can write $v_i = (\cos \theta_i, \sin \theta_i)$ with $0 \leq \theta_1 \leq \dotsb \leq \theta_{2k+1} < \pi$.
Recall that $\sigma_V \defined \eps_1 v_1 + \cdots + \eps_n v_n$ and note that without loss of generality, we may assume that the vectors $v_i$ are given in standard form.
Indeed, the distribution of $\sigma_V$ does not change when we replace $v_i$ by $-v_i$, so all $v_i$ may be put in a half-circle.
We can then apply a rotation to all $v_i$ so that $\theta_1 = 0$ without changing the distribution of $\norm{\sigma_V}_2$.

A \emph{pairing} $\cP$ of a collection of vectors $v_1, \dotsc, v_n \in \RR^2$ is a set of disjoint pairs of indices in $[n]$.
Our first lemma shows that, for any collection of vectors in standard form, there is a pairing of all but one of the vectors such that the sum of the squared distances between paired vectors is not too large.

\begin{lemma}
\label{lem:optimal-pairing}
If vectors $v_1, \dotsc, v_{2k+1} \in \RR^2$ are given in standard form, then there is a pairing $\cP$ of them such that $\card{\cP} = k$,
\begin{equation*}
    \sum_{\set{i,j} \in \cP} \abs{\theta_i - \theta_j} \leq \pi/2,
    \quad \text{and} \quad \sum_{\set{i,j} \in \cP} \norm{v_i - v_j}_2^2 \leq 2.
\end{equation*}
\end{lemma}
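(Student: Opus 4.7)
The plan is to choose between two natural adjacent pairings and verify both inequalities separately, the second via superadditivity of a convex function vanishing at the origin. Writing $g_j \defined \theta_{j+1} - \theta_j$ for $1 \leq j \leq 2k$, the standard form assumption gives $\sum_{j=1}^{2k} g_j = \theta_{2k+1} - \theta_1 < \pi$. Let $\mathcal{P}_A$ leave $v_1$ unpaired and take pairs $\set{v_{2i}, v_{2i+1}}$ for $i = 1, \dotsc, k$, so that its paired angular gaps are the even-indexed $g_{2i}$; dually, let $\mathcal{P}_B$ leave $v_{2k+1}$ unpaired and take pairs $\set{v_{2i-1}, v_{2i}}$, with paired gaps the odd-indexed $g_{2i-1}$. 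The total angular costs $\sum_{\set{i,j}\in\mathcal{P}_A} \abs{\theta_i - \theta_j}$ and $\sum_{\set{i,j}\in\mathcal{P}_B} \abs{\theta_i - \theta_j}$ together equal $\sum_{j=1}^{2k} g_j < \pi$, so taking $\mathcal{P}$ to be whichever of the two has the smaller cost yields the first inequality immediately.

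For the second bound, use the identity $\norm{v_i - v_j}_2^2 = 2 - 2\cos(\theta_j - \theta_i) = f(\theta_j - \theta_i)$ with $f(x) \defined 2(1 - \cos x)$. The key observation is that $f$ is convex on $[0, \pi/2]$ with $f(0) = 0$ and $f(\pi/2) = 2$. Any nonnegative convex function vanishing at $0$ is superadditive on its interval of convexity: writing $x = \tfrac{x}{x+y}(x+y) + \tfrac{y}{x+y} \cdot 0$ and applying convexity gives $f(x) \leq \tfrac{x}{x+y} f(x+y)$, and adding the symmetric inequality for $y$ yields $f(x) + f(y) \leq f(x+y)$ whenever $x, y \geq 0$ and $x+y \leq \pi/2$. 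Iterating this over the $k$ paired gaps $\alpha_1, \dotsc, \alpha_k$ of $\mathcal{P}$, which are all nonnegative and sum to at most $\pi/2$ (so each is individually at most $\pi/2$), one obtains
\[ \sum_{\set{i,j} \in \mathcal{P}} \norm{v_i - v_j}_2^2 = \sum_{i=1}^{k} f(\alpha_i) \leq f\Bigl(\sum_{i=1}^{k} \alpha_i\Bigr) \leq f(\pi/2) = 2. \]

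I do not foresee a serious obstacle: the proof is short and reduces to elementary planar geometry together with the convexity of $1 - \cos x$ on $[0, \pi/2]$. The only point deserving a moment of care is that the iterated superadditivity argument must remain in the interval where $f$ is convex, but this is automatic from the constraint $\sum \alpha_i \leq \pi/2$, which forces each individual gap to lie in $[0, \pi/2]$. Note also that the bound $\pi/2$ on the angular cost is tight precisely when $f(\sum \alpha_i) = f(\pi/2) = 2$, so the two inequalities in the lemma are in fact simultaneously sharp in this worst case.
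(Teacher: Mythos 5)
Your proposal is correct and follows essentially the same route as the paper: the same two candidate adjacent pairings whose angular costs sum to at most $\pi$, hence one of cost at most $\pi/2$. The only difference is cosmetic — the paper bounds $2-2\cos x \leq 4\abs{x}/\pi$ pointwise (the chord bound, itself an instance of your convexity inequality) and sums, whereas you iterate superadditivity of the convex function vanishing at $0$; both yield the same bound of $2$ via $f(\pi/2)=2$.
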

\begin{proof}
Since $v_i = (\cos\theta_i, \sin\theta_i)$ are in standard form, we have
\begin{equation*}
    \sum_{i=1}^{k} \abs{\theta_{2i} - \theta_{2i-1}} + \sum_{i=1}^{k} \abs{\theta_{2i+1} - \theta_{2i}} = \theta_{2k-1} - \theta_1 \leq \pi,
\end{equation*}
so by taking $\cP$ to be either $\set[\big]{(2i-1,2i) \st i \in [k]}$ or $\set[\big]{(2i,2i+1) \st i \in [k]}$, we can ensure that $\sum_{\set{i,j} \in \cP} \abs{\theta_i - \theta_j} \leq \pi/2$.
We claim that the same pairing satisfies the second condition.
Indeed, note that $\norm{v_i - v_j}_2^2 = 2 - 2 \cos(\theta_i - \theta_j)$ and set $f \from [-\pi/2,\pi/2] \to \RR$ to be the function $f(x) = 2 - 2\cos x$.
Further note that $f(x) \leq 4\abs{x}/\pi$, so
\begin{equation*}
    \sum_{\set{i,j} \in \cP} \norm{v_i - v_j}_2^2 = \sum_{\set{i,j} \in \cP} f(\theta_i - \theta_j) \leq \frac{4}{\pi}\sum_{\set{i,j} \in \cP}\abs{\theta_i - \theta_j} \leq 2. \qedhere
\end{equation*}
\end{proof}

We remark that \Cref{lem:optimal-pairing} cannot be improved, as can be seen by taking the vectors $v_1 = \dotsb = v_{2k-1} = (1,0)$, $v_{2k} = (0,1)$, and $v_{2k+1} = (-\cos \theta, \sin \theta)$ for some arbitrarily small $\theta > 0$.
Indeed, any maximal pairing $\cP$ of these vectors must pair some two distinct vectors with each other, which leads to both conditions being sharp.

For a pairing $\cP$, we define
\begin{equation*}
    E_1(\cP) \defined \sum_{\set{i,j} \in \cP} \abs{\theta_i - \theta_j},
    \quad \text{and} \quad E_2(\cP) \defined \sum_{\set{i,j} \in \cP} \norm{v_i - v_j}_2^2.
\end{equation*}
Therefore \Cref{lem:optimal-pairing} shows that any $2k+1$ vectors in standard form admit a pairing $\cP$ with $\card{\cP} = k$, $E_1(\cP) \leq \pi/2$ and $E_2(\cP) \leq 2$.
Our next goal is to show that if $E_2(\cP)$ is large, then there must be a pair in $\cP$ which contributes substantially to $E_2(\cP)$.
To establish this, we will use the following fact about convex functions.

\begin{lemma}
\label{lem:convex}
Let $\varphi \from [0,\Delta] \to \RR$ be convex, increasing and such that $\varphi(0) = 0$.
If $0 \leq x_1, \dotsb, x_k \leq \Delta$ are such that $x_1 + \dotsb + x_k \leq S$, then we have
\begin{equation}
\label{eq:convex-ineq}
    \varphi(x_1) + \dotsb + \varphi(x_k) \leq \p[\big]{\floor{S/\Delta} + 1} \varphi(\Delta).
\end{equation}
\end{lemma}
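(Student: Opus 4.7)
The plan is to exploit the classical fact that, when $\varphi \from [0,\Delta] \to \RR$ is convex with $\varphi(0) = 0$, the chord-slope function $x \mapsto \varphi(x)/x$ is non-decreasing on $(0,\Delta]$. First I would record this: for $0 < a \leq b \leq \Delta$, write $a = (a/b)\cdot b + (1 - a/b) \cdot 0$ and apply convexity to get $\varphi(a) \leq (a/b)\varphi(b) + (1 - a/b)\varphi(0) = (a/b)\varphi(b)$, i.e.\ $\varphi(a)/a \leq \varphi(b)/b$.

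Specialising to $b = \Delta$, this gives $\varphi(x_i) \leq (x_i/\Delta) \varphi(\Delta)$ for every $x_i \in (0,\Delta]$, and the inequality is trivially an equality at $x_i = 0$ since $\varphi(0) = 0$. Summing over $i$ and using the hypothesis $x_1 + \dotsb + x_k \leq S$, I obtain
\begin{equation*}
    \varphi(x_1) + \dotsb + \varphi(x_k) \leq \frac{\varphi(\Delta)}{\Delta} \sum_{i=1}^{k} x_i \leq \frac{S}{\Delta}\,\varphi(\Delta).
\end{equation*}
Since $\varphi$ is increasing and $\varphi(0) = 0$, we have $\varphi(\Delta) \geq 0$, so the bound $S/\Delta \leq \floor{S/\Delta} + 1$ yields the desired inequality \eqref{eq:convex-ineq}.

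There is no serious obstacle here: the result is in fact slightly weaker than what this argument proves, since $(S/\Delta)\varphi(\Delta)$ already suffices. The rounded form in the statement is likely preferred because it is the combinatorially natural bound for the intended application (packing pairs of total weight $S$ into chunks of size at most $\Delta$). As an alternative, one could argue by a transportation/smoothing step: among all tuples $(x_1,\dotsc,x_k)$ with prescribed sum and each entry in $[0,\Delta]$, convexity of $\varphi$ forces the maximum of $\sum \varphi(x_i)$ to be attained when as many $x_i$ as possible equal $\Delta$, which immediately gives at most $\floor{S/\Delta}$ terms equal to $\Delta$ plus one remainder term bounded by $\varphi(\Delta)$. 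I would go with the chord-slope proof since it is cleaner and avoids any extremal-configuration bookkeeping.
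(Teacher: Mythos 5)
Your proof is correct, and it takes a genuinely different route from the paper. The paper argues by an exchange/smoothing step: using $\varphi(x_i - t) + \varphi(x_j + t) \geq \varphi(x_i) + \varphi(x_j)$, it pushes mass between coordinates until all but at most one $x_i$ equal $0$ or $\Delta$, arriving at the intermediate bound $\floor{S/\Delta}\varphi(\Delta) + \varphi\p[\big]{S - \Delta\floor{S/\Delta}}$ and then using monotonicity. Your chord-slope argument ($\varphi(a) \leq (a/\Delta)\varphi(\Delta)$ for $a \in [0,\Delta]$, valid since $\varphi$ is convex with $\varphi(0)=0$) instead gives the linear bound $(S/\Delta)\varphi(\Delta)$ in one line, and the final step $(S/\Delta)\varphi(\Delta) \leq \p[\big]{\floor{S/\Delta}+1}\varphi(\Delta)$ is legitimate because $\varphi(\Delta) \geq \varphi(0) = 0$ by monotonicity. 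What each buys: your argument is shorter and sidesteps the extremal-configuration bookkeeping (the paper's claim that the maximum is attained at such a configuration is itself only justified by the pairwise exchange, not spelled out as a compactness/vertex argument); the paper's intermediate expression $\floor{S/\Delta}\varphi(\Delta) + \varphi\p[\big]{S - \Delta\floor{S/\Delta}}$ is in fact at most your $(S/\Delta)\varphi(\Delta)$ by the same chord-slope inequality, so neither final constant is lost either way, and both comfortably imply \eqref{eq:convex-ineq} as needed for \Cref{lem:strong-pair}.
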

\begin{proof}
By definition of convexity, we have $\varphi(x_i - t) + \varphi(x_j + t) \geq \varphi(x_i) + \varphi(x_j)$ for all $t \geq 0$ such that $x_i - t \geq 0$ and $x_j + t \leq \Delta$.
Therefore the maximum of $\varphi(x_1) + \dotsb + \varphi(x_k)$ with the constraints that $0 \leq x_1, \dotsb, x_k \leq \Delta$ and $x_1 + \dotsb + x_k \leq S$ is attained when all $x_i$, except for maybe one, are equal to $0$ or $\Delta$. This gives
\begin{equation*}
    \varphi(x_1) + \dotsb + \varphi(x_k) \leq \floor{S/\Delta} \varphi(\Delta) + \varphi(S -
 \Delta\floor{S/\Delta}),
\end{equation*}
which implies \eqref{eq:convex-ineq} since $\varphi$ is increasing.
\end{proof}

We now show that every pairing $\cP$ with large $E_2(\cP)$ contains a pair $\set{i,j} \in \cP$ for which $\norm{v_i - v_j}_2^2$ is also large.

\begin{lemma}
\label{lem:strong-pair}
If $\cP$ is a paring of a collection $v_1, \dotsc, v_{2k+1}$ of vectors in standard form with $E_1(\cP) \leq \pi/2$ and $E_2(\cP) \geq \beta$, then there is $\set{i,j} \in \cP$ such that $\norm{v_i - v_j}_2^2 \geq \beta^2/10$.
\end{lemma}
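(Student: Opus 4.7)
The plan is to apply the convexity lemma (\Cref{lem:convex}) to the function $f(x) \defined 2 - 2\cos x$, for which $\norm{v_i - v_j}_2^2 = f(\Delta_{ij})$ where $\Delta_{ij} \defined \abs{\theta_i - \theta_j}$. Note $f$ is convex and increasing on $[0,\pi/2]$ with $f(0) = 0$. Since $E_1(\cP) \leq \pi/2$, every $\Delta_{ij}$ lies in $[0,\pi/2]$. Writing $D^* \defined \max_{\set{i,j} \in \cP} \Delta_{ij}$ and $M \defined f(D^*)$, we have $M = \max_{\set{i,j} \in \cP} \norm{v_i - v_j}_2^2$, which is the quantity we wish to bound from below.

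First I would apply \Cref{lem:convex} with $\varphi = f$, $\Delta = D^*$ and $S = E_1(\cP) \leq \pi/2$, obtaining
\begin{equation*}
\beta \leq E_2(\cP) = \sum_{\set{i,j} \in \cP} f(\Delta_{ij}) \leq \p[\Big]{\frac{\pi}{2 D^*} + 1} M.
\end{equation*}
To convert this into an inequality involving only $M$, I would use the elementary bound $2 - 2\cos x \leq x^2$, which gives $M \leq (D^*)^2$ and hence $D^* \geq \sqrt{M}$. Substituting yields
\begin{equation*}
\beta \leq \frac{\pi}{2} \sqrt{M} + M.
\end{equation*}
Next I would observe that $\beta \leq 2$: by convexity and $f(0) = 0$ we have $f(x) \leq (4/\pi) x$ on $[0, \pi/2]$ (the chord bound already used in the proof of \Cref{lem:optimal-pairing}), whence $\beta \leq E_2(\cP) \leq (4/\pi) E_1(\cP) \leq 2$.

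To conclude, suppose for contradiction that $M < \beta^2/10$. Then $\sqrt{M} < \beta/\sqrt{10}$, so the displayed inequality becomes
\begin{equation*}
\beta < \frac{\pi \beta}{2\sqrt{10}} + \frac{\beta^2}{10} = \beta \p[\Big]{\frac{\pi}{2\sqrt{10}} + \frac{\beta}{10}}.
\end{equation*}
We may assume $\beta > 0$ (else the statement is trivial), so dividing by $\beta$ gives $1 < \pi/(2\sqrt{10}) + \beta/10$. But $\pi/(2\sqrt{10}) < 0.497$ and, since $\beta \leq 2$, we have $\beta/10 \leq 0.2$, so the right-hand side is less than $0.7 < 1$, a contradiction. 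Hence $M \geq \beta^2/10$, as desired.

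No step is genuinely hard; the approach hinges on pairing the convexity lemma with the two standard bounds $f(x) \leq x^2$ (to extract $D^* \geq \sqrt M$) and $f(x) \leq (4/\pi)x$ (to bound $\beta$). The only thing that needs care is the numerical check $\pi/(2\sqrt{10}) + 2/10 < 1$, which succeeds with room to spare; if a cleaner constant were desired in place of $\beta^2/10$, one could loosen it without affecting any of these inequalities.
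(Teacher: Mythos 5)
Your proof is correct and follows essentially the same route as the paper: both hinge on applying \Cref{lem:convex} to $f(x) = 2 - 2\cos x$ under the constraint $E_1(\cP) \leq \pi/2$ and then deriving a contradiction from a small numerical inequality. The only cosmetic difference is that you set $\Delta$ equal to the actual maximum gap $D^*$ and lower-bound it via $f(x) \leq x^2$, whereas the paper sets $\Delta = f^{-1}(\delta)$ and verifies the resulting inequality for the function $g$ directly; your bookkeeping is, if anything, a little cleaner.
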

\begin{proof}
Recall that $E_1(\cP) = \sum_{\set{i,j} \in \cP} \abs{\theta_i - \theta_j} \leq \pi/2$ and that
\begin{equation*}
    E_2(\cP) = \sum_{\set{i,j} \in \cP} \norm{v_i - v_j}_2^2 = \sum_{\set{i,j} \in \cP} f(\abs{\theta_i - \theta_j}),
\end{equation*}
where $f \from [0,\pi/2] \to \RR$ is defined as $f(x) \defined 2 - 2\cos x$.
It is straightforward that $f$ is convex, increasing, and satisfies $f(0) = 0$.
If $f(\abs{\theta_i - \theta_j}) \leq \delta$ for all $\set{i,j} \in \cP$, then $\abs{\theta_i - \theta_j} \leq f^{-1}(\delta) \defines \Delta$, where $f^{-1} \from [0,2] \to \RR$ is defined by $f^{-1}(x) \defined \arccos(1 - x/2)$.
Applying \Cref{lem:convex}, we obtain
\begin{equation*}
    E_2(\cP) \leq \p[\big]{\floor{(\pi/2)/\Delta} + 1} f(\Delta)
    \leq \p[\big]{\pi/(2f^{-1}(\delta)) + 1} \delta
\end{equation*}
Now define the function $g(x) = (\pi/(2f^{-1}(x)) + 1)x$ and note\footnote{To see this, note that $\arccos(1-t) \leq \pi/2$ for $0 \leq t \leq 1$, so $\arccos(1-x^2/20) \leq \pi/2$ for $x \leq \pi/2 < \sqrt{20}$.
So in this range, we have $g(x^2/10) \leq  \pi x^2/(10\arccos(1-x^2/20))$, so it suffices to show that $\pi x/ 10 \leq \arccos(1-x^2/20)$.
Alternatively, we must show that $\cos(\pi x/10) \geq 1 - x^2/20$, which follows from the inequality $\cos x \geq 1 - x^2/2$.} that $g(x^2/10) < x$ for all $0 < x < \pi/2$.
Therefore, if $E_2(\cP) \geq \beta$ and $f(\abs{\theta_i - \theta_j}) \leq \beta^2/10$ for all $\set{i,j} \in \cP$, we would have $\beta \leq E_2(\cP) \leq g(\beta^2/10) < \beta$, a clear contradiction.
\end{proof}

An immediate corollary of \Cref{lem:strong-pair} is the following.

\begin{corollary}
\label{cor:delta-pairing}
For every collection of unit vectors $v_1, \dotsc, v_{2k+1} \in \RR^2$ in standard form and any $\delta > 0$, there is a pairing $\cP$ of them with $E_2(\cP) \leq \delta$ and $\card{\cP} \geq k - \floor{10/\delta}$.
\end{corollary}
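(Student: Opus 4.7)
The plan is to iteratively strip off heavy pairs from the initial pairing provided by \Cref{lem:optimal-pairing}, using \Cref{lem:strong-pair} to guarantee that a heavy pair exists whenever $E_2$ is still above~$\delta$.

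Concretely, I would start with the pairing $\cP_0$ from \Cref{lem:optimal-pairing}, so that $\card{\cP_0}=k$, $E_1(\cP_0)\leq\pi/2$ and $E_2(\cP_0)\leq 2$. Given $\cP_t$, if $E_2(\cP_t)\leq\delta$ then stop and set $\cP\defined\cP_t$; otherwise, since dropping pairs only decreases $E_1$, we still have $E_1(\cP_t)\leq\pi/2$, and \Cref{lem:strong-pair} with $\beta=E_2(\cP_t)$ produces a pair $\set{i,j}\in\cP_t$ with $\norm{v_i-v_j}_2^2\geq E_2(\cP_t)^2/10$, which we discard to form $\cP_{t+1}$. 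It then suffices to show that this process terminates in at most $\floor{10/\delta}$ steps.

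Setting $a_t\defined E_2(\cP_t)$, the removal yields the recurrence $a_{t+1}\leq a_t-a_t^2/10=a_t(1-a_t/10)$. Since $a_t\leq 2$, one has $1/(1-a_t/10)\geq 1+a_t/10$, hence $1/a_{t+1}\geq 1/a_t+1/10$. Iterating and using $1/a_0\geq 1/2$ gives $1/a_t\geq (5+t)/10$, i.e.\ $a_t\leq 10/(5+t)$. Taking $t=\floor{10/\delta}$ and using $\floor{10/\delta}>10/\delta-1$ gives $a_t<10\delta/(4\delta+10)<\delta$, so the stopping criterion must have been triggered by step~$t$. Therefore at most $\floor{10/\delta}$ pairs have been discarded, so $\card{\cP}\geq k-\floor{10/\delta}$ and $E_2(\cP)\leq\delta$, as required.

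The one point that needs care is the harmonic-style analysis of this recursion. A naive accounting, using only the worst-case bound $\norm{v_i-v_j}_2^2\geq\delta^2/10$ on the weight of each discarded pair, would give merely $O(1/\delta^2)$ removals from the budget $E_2(\cP_0)\leq 2$. Feeding the \emph{current} value of $E_2(\cP_t)$ back into \Cref{lem:strong-pair} at every step, rather than the fixed threshold $\delta$, is what sharpens this to the desired linear-in-$1/\delta$ bound.
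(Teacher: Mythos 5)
Your proposal is correct and follows essentially the same route as the paper: start from the pairing of \Cref{lem:optimal-pairing}, greedily discard a heavy pair supplied by \Cref{lem:strong-pair}, and solve the recurrence $\beta_{t+1}\leq\beta_t-\beta_t^2/10$ to get $\beta_t\leq 10/(t+5)$ (the paper compares against the explicit sequence $\gamma_t=10/(t+5)$, while you use the equivalent reciprocal trick $1/a_{t+1}\geq 1/a_t+1/10$). The remark that $E_1$ only decreases under removals, so \Cref{lem:strong-pair} remains applicable, is a worthwhile point the paper leaves implicit.
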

\begin{proof}
Apply \Cref{lem:optimal-pairing} to find an initial pairing $\cP_0$ with $\card{\cP_0} = k$ and $E_2(\cP_0) \leq 2$.
We now greedily remove pairs from $\cP_0$ until we get $E_2(\cP) \leq \delta$.
Indeed, inductively define $\cP_{n+1}$ by removing from $\cP_n$ a pair $\set{i,j}$ that has maximal value of $\norm{v_i - v_j}_2^2$.

Let $\beta_n \defined E_2(\cP_n)$.
We know by \Cref{lem:strong-pair} that $\beta_{n+1}\leq \beta_n - \beta_n^2/10$.
The function $f(x) = x - x^2/10$ is increasing for $0\leq x \leq 2$, and so if we can find a sequence of numbers $\gamma_n$ with $\gamma_{n+1}\geq \gamma_n - \gamma_n^2/10$ for all $n$ and $\gamma_0 = 2$, then we will know inductively that $\beta_{n+1} \leq f(\beta_n) \leq f(\gamma_n) \leq \gamma_{n+1}$ for all $n$.
In particular, if $\gamma_n \leq \delta$, then $E_2(\cP_n) =\beta_n \leq \delta$.

It is easy to check that the sequence $\gamma_n = 10/(n+5)$ has the required properties, and that if $n \geq 10/\delta - 5$ then $\gamma_n \leq \delta$.
The desired result follows by taking $\cP = \cP_{\ceil{10/\delta - 5}}$.
\end{proof}

We now deduce \Cref{thm:approximate} by combining \Cref{prop:hjns}, \Cref{thm:alternating-sums} and \Cref{cor:delta-pairing}.

\begin{proof}[Proof of \Cref{thm:approximate}]
Let $n = 2k + 1$ and consider unit vectors $v_1, \dotsc, v_n \in \RR^2$.
As explained earlier, we may assume without loss of generality that the vectors $v_1, \dotsc, v_n$ are in standard form. Moreover, we may assume that $n \geq 80/\delta^2$, as otherwise the result is obvious from \Cref{thm:alternating-sums}.
By \Cref{cor:delta-pairing}, we have a partial pairing $\cP$ satisfying $E_2(\cP) \leq \delta^2/2$ and $\card{\cP} \geq k - 20/\delta^2$.
We now partition the index set $[n]$ into $A \cup B$, where $A = \cup \cP$ consists of all indices $i$ which belongs to a pair in $\cP$, and $B = [n] \setminus A$.
Note that $\card{A} \geq 2k - 40/\delta^2$, while $\card{B}$ is an odd number satisfying $\card{B} \leq 40/\delta^2 + 1$.
By \Cref{thm:alternating-sums}, there is a set of signs $\set{\eta_i}_{i \in B}$ with $\eta_i = \pm 1$, such that $\norm[\big]{\sum_{i \in B} \eta_i v_i}_2 \leq 1$.
This implies the bound
\begin{equation*}
    \prob[\bigg]{ \norm[\Big]{\sum_{i \in B}\eps_i v_i}_2 \leq 1} \geq \frac{1}{2^{\card{B}}}.
\end{equation*}
Moreover, applying \Cref{prop:hjns} with $r= \delta$ and $\alpha = \delta^2/2$ gives that
\begin{equation*}
    \prob[\bigg]{\norm[\Big]{\sum_{i \in A} \eps_i v_i}_2 \leq \delta} \geq \frac{c_\delta}{\card{A}},
\end{equation*}
for some constant $c_\delta > 0$ that depends only on $\delta$.
By the triangle inequality, we deduce from the above equations that
\begin{align*}
    \prob[\bigg]{ \norm[\Big]{\sum_{i \in [n]}\eps_i v_i}_2 \leq 1 + \delta } &\geq \prob[\bigg]{ \norm[\Big]{\sum_{i \in A}\eps_i v_i}_2 \leq \delta} \prob[\bigg]{ \norm[\Big]{\sum_{i \in B}\eps_i v_i}_2 \leq 1} \\
    &\geq \frac{c_\delta}{\card{A}} \cdot \frac{1}{2^{\card{B}}}
    \geq \frac{c_\delta}{(n - 40/\delta^2) 2^{40/\delta^2+1}} \geq \frac{c'_\delta}{n},
\end{align*}
for some constant $c'_\delta > 0$ depending only on $\delta$, as claimed.
\end{proof}


\section{Refined vector balancing in the plane}
\label{sec:exponential}

In this section, our aim is to prove \Cref{thm:lower-bound}, which we may recall states that in the plane, for any unit vectors $v_1, \dotsc, v_n \in \RR^2$ when $n$ is odd, we have the following exponential lower bound for the probability that the random signed sum of the vectors lies in the unit disc:
\begin{align*}
    \prob[\big]{ \norm{\eps_1 v_1 + \dotsb + \eps_n v_n}_2 \leq 1 } \geq \frac{1}{4} (0.525)^n.
\end{align*}
We now proceed to the proof.

\begin{proof}[Proof of \Cref{thm:lower-bound}]
We start with unit vectors $v_1, \dotsc, v_n \in \RR^2$ and let $P$ be the convex hull of $\set{\pm v_1, \dotsc, \pm v_n}$.
We may assume, by replacing a vector $v_i$ by $-v_i$, that the vectors $v_1, \dotsc, v_n, -v_1, \dotsc, -v_n$ occur in this order as the vertices of $P$.
Define  $u \defined \sum_{i=1}^{n}(-1)^{i-1} v_i$.
As $n$ is odd, this corresponds to adding up every second vertex of the polygon $P$.
We will now show that $\norm{u}_2 \leq 1$.
Indeed, assume that $u \neq 0$, let $U$ be the linear span of the vector $u$ and let $b$ and $-b$ be the points in which the line $U$ intersects with the boundary of $P$.
See \Cref{fig:balancing} for the setup.
By relabelling\footnote{Note that a cyclic relabelling may not be enough, but this is always possible if we allow a change in the orientation of the labelling.} the vertices of $P$, we may assume that $b$ belongs to the edge $[-v_n,v_1]$.
Writing $a_i = v_{i+1} - v_i$ for $1 \leq i \leq n-1$ and $a_n = -v_1 - v_n$, we then have $2u = \sum_{i=1}^n (-1)^{i} a_i$.
Let $\pi_U$ be the oblique projection $\RR^2 \to U$ that sends $v_1$ to $b$.
Then we have
\begin{equation*}
    \norm{2u}_2 = \norm[\Big]{\sum_{i=1}^n (-1)^{i}\pi_U(a_i)}_2
    \leq \sum_{i=1}^n \norm[\big]{\pi_U(a_i)}_2
    \leq \norm{2b}_2,
\end{equation*}
see \Cref{fig:balancing} (middle) for an explanation of the last inequality.
We conclude that $\norm{u}_2 \leq \norm{b}_2 \leq 1$.
So far, we have closely followed the proof of \Cref{thm:alternating-sums} due to Bárány, Ginzburg and V. S. Grinberg~\cite{Barany2013-vn}.
By a rotation, we may assume that the line $U$ aligns with $x$-axis, so $u = (\beta, 0)$ for some $0 \leq \beta \leq 1$.
Reflect all vectors $v_i$ so they have non-negative $x$-coordinate and relabel them so they are $v_1, \dotsc, v_n$ counter-clockwise, as in \Cref{fig:balancing} (right).
After relabelling, we have that $\sum_{i=1}^{n}(-1)^{i-1} v_i$ is equal to $\pm u$.
Replace $u$ by $-u$ if needed so that this sum is equal to $u$.

\begin{figure}[!ht]
\centering
\begin{tikzpicture}[scale=1.92]
\definecolor{col_a}{RGB}{195, 66, 63};
\tkzDefPoints{0/-1.4/BT,0/1.4/BB}
\tkzDrawSegment[opacity=0](BT,BB)
\tkzDefPoints{0/0/O, 1/0/R}
\tkzDefPointsBy[rotation=center O angle -64](R){V1}
\tkzDefPointsBy[rotation=center O angle -40](R){V2}
\tkzDefPointsBy[rotation=center O angle -30](R){V3}
\tkzDefPointsBy[rotation=center O angle -20](R){V4}
\tkzDefPointsBy[rotation=center O angle -10](R){V5}
\tkzDefPointsBy[rotation=center O angle 20](R){V6}
\tkzDefPointsBy[rotation=center O angle 45](R){V7}
\tkzDefPointsBy[rotation=center O angle 65](R){V8}
\tkzDefPointsBy[rotation=center O angle 85](R){V9}
\tkzDefPointsBy[rotation=center O angle 180-64](R){U1}
\tkzDefPointsBy[rotation=center O angle 180-40](R){U2}
\tkzDefPointsBy[rotation=center O angle 180-30](R){U3}
\tkzDefPointsBy[rotation=center O angle 180-20](R){U4}
\tkzDefPointsBy[rotation=center O angle 180-10](R){U5}
\tkzDefPointsBy[rotation=center O angle 180+20](R){U6}
\tkzDefPointsBy[rotation=center O angle 180+45](R){U7}
\tkzDefPointsBy[rotation=center O angle 180+65](R){U8}
\tkzDefPointsBy[rotation=center O angle 180+85](R){U9}
\tkzDefPoints{0.0154188810884/-0.1326609220379/U}
\tkzInterLL(V1,U9)(O,U)\tkzGetPoint{B}
\tkzInterLL(V9,U1)(O,U)\tkzGetPoint{B'}
\tkzDrawCircle[line width=0.5pt, gray](O,R)
\tkzDrawSegments[black, line width=0.75pt](V1,V2 V2,V3 V3,V4 V4,V5 V5,V6 V6,V7 V7,V8 V8,V9 V9,U1 U1,U2 U2,U3 U3,U4 U4,U5 U5,U6 U6,U7 U7,U8 U8,U9 U9,V1)
\tkzDrawSegments[col_a, line width=0.8pt, arrows=-Stealth](O,V1 O,V3 O,V5 O,V7 O,V9 O,U2 O,U4 O,U6 O,U8)
\tkzDrawPoints[size=2.5pt, black](V1,V2,V3,V4,V5,V6,V7,V8,V9)
\tkzDrawPoints[size=2.5pt, black](U1,U2,U3,U4,U5,U6,U7,U8,U9)
\tkzDrawPoints[size=2.5pt, black](O,U)
\tkzLabelLine[pos=1.2](O,V1){$v_1$}
\tkzLabelLine[pos=1.2](O,V2){$v_2$}
\tkzLabelLine[pos=1.2](O,V3){$v_3$}
\tkzLabelLine[pos=1.2](O,V4){$v_4$}
\tkzLabelLine[pos=1.2](O,V5){$v_5$}
\tkzLabelLine[pos=1.2](O,V6){$v_6$}
\tkzLabelLine[pos=1.2](O,V7){$v_7$}
\tkzLabelLine[pos=1.2](O,V8){$v_8$}
\tkzLabelLine[pos=1.2](O,V9){$v_9$}
\tkzLabelLine[pos=1.2](O,U1){$-v_1$}
\tkzLabelLine[pos=1.25](O,U2){$-v_2$}
\tkzLabelLine[pos=1.25](O,U3){$-v_3$}
\tkzLabelLine[pos=1.25](O,U4){$-v_4$}
\tkzLabelLine[pos=1.22](O,U5){$-v_5$}
\tkzLabelLine[pos=1.21](O,U6){$-v_6$}
\tkzLabelLine[pos=1.2](O,U7){$-v_7$}
\tkzLabelLine[pos=1.2](O,U8){$-v_8$}
\tkzLabelLine[pos=1.18](O,U9){$-v_9$}
\tkzLabelPoint[xshift=-3pt,yshift=-1.5pt](U){$u$}
\end{tikzpicture} 
\begin{tikzpicture}[scale=1.92]
\definecolor{col_a}{RGB}{195, 66, 63};
\tkzDefPoints{0/-1.4/BT,0/1.4/BB}
\tkzDrawSegment[opacity=0](BT,BB)
\tkzDefPoints{0/0/O, 1/0/R}
\tkzDefPointsBy[rotation=center O angle -64](R){V1}
\tkzDefPointsBy[rotation=center O angle -40](R){V2}
\tkzDefPointsBy[rotation=center O angle -30](R){V3}
\tkzDefPointsBy[rotation=center O angle -20](R){V4}
\tkzDefPointsBy[rotation=center O angle -10](R){V5}
\tkzDefPointsBy[rotation=center O angle 20](R){V6}
\tkzDefPointsBy[rotation=center O angle 45](R){V7}
\tkzDefPointsBy[rotation=center O angle 65](R){V8}
\tkzDefPointsBy[rotation=center O angle 85](R){V9}
\tkzDefPointsBy[rotation=center O angle 180-64](R){U1}
\tkzDefPointsBy[rotation=center O angle 180-40](R){U2}
\tkzDefPointsBy[rotation=center O angle 180-30](R){U3}
\tkzDefPointsBy[rotation=center O angle 180-20](R){U4}
\tkzDefPointsBy[rotation=center O angle 180-10](R){U5}
\tkzDefPointsBy[rotation=center O angle 180+20](R){U6}
\tkzDefPointsBy[rotation=center O angle 180+45](R){U7}
\tkzDefPointsBy[rotation=center O angle 180+65](R){U8}
\tkzDefPointsBy[rotation=center O angle 180+85](R){U9}
\tkzDefPoints{0.01541888/-0.13266092/U}
\tkzInterLL(V1,U9)(O,U)\tkzGetPoint{B}
\tkzInterLL(V9,U1)(O,U)\tkzGetPoint{B'}
\tkzDefMidPoint(V1,V2) \tkzGetPoint{A1}
\tkzDefMidPoint(V2,V3) \tkzGetPoint{A2}
\tkzDefMidPoint(V3,V4) \tkzGetPoint{A3}
\tkzDefMidPoint(V4,V5) \tkzGetPoint{A4}
\tkzDefMidPoint(V5,V6) \tkzGetPoint{A5}
\tkzDefMidPoint(V6,V7) \tkzGetPoint{A6}
\tkzDefMidPoint(V7,V8) \tkzGetPoint{A7}
\tkzDefMidPoint(V8,V9) \tkzGetPoint{A8}
\tkzDefMidPoint(V9,U1) \tkzGetPoint{A9}
\tkzDefPointBy[translation=from V1 to B](V2) \tkzGetPoint{Y2}
\tkzDefPointBy[translation=from V1 to B](V3) \tkzGetPoint{Y3}
\tkzDefPointBy[translation=from V1 to B](V4) \tkzGetPoint{Y4}
\tkzDefPointBy[translation=from V1 to B](V5) \tkzGetPoint{Y5}
\tkzDefPointBy[translation=from V1 to B](V6) \tkzGetPoint{Y6}
\tkzDefPointBy[translation=from V1 to B](V7) \tkzGetPoint{Y7}
\tkzDefPointBy[translation=from V1 to B](V8) \tkzGetPoint{Y8}
\tkzDefPointBy[projection=onto B--B'](Y2) \tkzGetPoint{X2}
\tkzDefPointBy[projection=onto B--B'](Y3) \tkzGetPoint{X3}
\tkzDefPointBy[projection=onto B--B'](Y4) \tkzGetPoint{X4}
\tkzDefPointBy[projection=onto B--B'](Y5) \tkzGetPoint{X5}
\tkzDefPointBy[projection=onto B--B'](Y6) \tkzGetPoint{X6}
\tkzDefPointBy[projection=onto B--B'](Y7) \tkzGetPoint{X7}
\tkzDefPointBy[projection=onto B--B'](Y8) \tkzGetPoint{X8}
\tkzDrawCircle[line width=0.25pt, gray](O,R)
\tkzDrawLines[line width=0.5pt, gray, dashed, add=0.2 and 0.2](B,B')
\tkzDrawSegments[black, line width=0.75pt](V1,V2 V2,V3 V3,V4 V4,V5 V5,V6 V6,V7 V7,V8 V8,V9 V9,U1 U1,U2 U2,U3 U3,U4 U4,U5 U5,U6 U6,U7 U7,U8 U8,U9 U9,V1)
\tkzDrawSegments[col_a, line width=0.8pt, arrows=-Stealth](V1,V2 V2,V3 V3,V4 V4,V5 V5,V6 V6,V7 V7,V8 V8,V9 V9,U1)
\tkzDrawPoints[size=2.5pt, black](V1,V2,V3,V4,V5,V6,V7,V8,V9)
\tkzDrawPoints[size=2.5pt, black](U1,U2,U3,U4,U5,U6,U7,U8,U9)
\tkzDrawPoints[size=2.5pt, black](O,U,B,B')
\tkzDrawSegments[black, line width=0.5pt, densely dotted](V2,X2 V3,X3 V4,X4 V5,X5 V6,X6 V7,X7 V8,X8)
\tkzLabelPoint[left](U){$u$}
\tkzLabelPoint[xshift=-6pt,yshift=14pt](B){$b$}
\tkzLabelPoint[xshift=-8pt,yshift=0pt](B'){$-b$}
\tkzLabelLine[pos=1.2](O,A1){$a_1$}
\tkzLabelLine[pos=1.2](O,A2){$a_2$}
\tkzLabelLine[pos=1.2](O,A3){$a_3$}
\tkzLabelLine[pos=1.2](O,A4){$a_4$}
\tkzLabelLine[pos=1.2](O,A5){$a_5$}
\tkzLabelLine[pos=1.2](O,A6){$a_6$}
\tkzLabelLine[pos=1.2](O,A7){$a_7$}
\tkzLabelLine[pos=1.2](O,A8){$a_8$}
\tkzLabelLine[pos=1.2](O,A9){$a_9$}
\tkzLabelLine[pos=1.25,right](O,B){$U$}
\end{tikzpicture} 
\begin{tikzpicture}[scale=1.92]
\definecolor{col_a}{RGB}{195, 66, 63};
\tkzDefPoints{0/-1.4/BT,0/1.4/BB}
\tkzDrawSegment[opacity=0](BT,BB)
\begin{scope}[rotate=360-276.6296136769996]
\tkzDefPoints{0/0/O, 1/0/R}
\tkzDefPointsBy[rotation=center O angle -64](R){V1}
\tkzDefPointsBy[rotation=center O angle -40](R){V2}
\tkzDefPointsBy[rotation=center O angle -30](R){V3}
\tkzDefPointsBy[rotation=center O angle -20](R){V4}
\tkzDefPointsBy[rotation=center O angle -10](R){V5}
\tkzDefPointsBy[rotation=center O angle 20](R){V6}
\tkzDefPointsBy[rotation=center O angle 45](R){V7}
\tkzDefPointsBy[rotation=center O angle 65](R){V8}
\tkzDefPointsBy[rotation=center O angle 85](R){V9}
\tkzDefPointsBy[rotation=center O angle 180-64](R){U1}
\tkzDefPointsBy[rotation=center O angle 180-40](R){U2}
\tkzDefPointsBy[rotation=center O angle 180-30](R){U3}
\tkzDefPointsBy[rotation=center O angle 180-20](R){U4}
\tkzDefPointsBy[rotation=center O angle 180-10](R){U5}
\tkzDefPointsBy[rotation=center O angle 180+20](R){U6}
\tkzDefPointsBy[rotation=center O angle 180+45](R){U7}
\tkzDefPointsBy[rotation=center O angle 180+65](R){U8}
\tkzDefPointsBy[rotation=center O angle 180+85](R){U9}
\tkzDefPoints{0.01541888/-0.13266092/U}
\tkzInterLL(V1,U9)(O,U)\tkzGetPoint{B}
\tkzInterLL(V9,U1)(O,U)\tkzGetPoint{B'}
\tkzDefLine[perpendicular=through O,K=1](O,U) \tkzGetPoint{D}
\tkzInterLL(O,D)(V4,V5) \tkzGetPoint{P1}
\tkzInterLL(O,D)(U5,U4) \tkzGetPoint{P2}
\tkzDrawCircle[line width=0.5pt, gray](O,R)
\tkzDrawLines[line width=0.5pt, gray, dashed, add=0.1 and 0.1](B,B')
\tkzDrawSegment[line width=0.5pt, col_a, dashed, add=0.05 and 0.05](P1,P2)
\tkzDrawSegments[black, line width=0.75pt](V1,V2 V2,V3 V3,V4 V4,V5 V5,V6 V6,V7 V7,V8 V8,V9 V9,U1 U1,U2 U2,U3 U3,U4 U4,U5 U5,U6 U6,U7 U7,U8 U8,U9 U9,V1)
\tkzDrawPoints[size=2.5pt, black](V1,V2,V3,V4,V5,V6,V7,V8,V9)
\tkzDrawPoints[size=2.5pt, black](U1,U2,U3,U4,U5,U6,U7,U8,U9)
\tkzDrawPoints[size=2.5pt, black](O,U,B,B')
\tkzLabelPoint[above](U){$u$}
\tkzLabelLine[pos=1.15](O,U6){$v_1$}
\tkzLabelLine[pos=1.15](O,U7){$v_2$}
\tkzLabelLine[pos=1.15](O,U8){$v_3$}
\tkzLabelLine[pos=1.15](O,U9){$v_4$}
\tkzLabelLine[pos=1.15](O,V1){$v_5$}
\tkzLabelLine[pos=1.2](O,V2){$v_6$}
\tkzLabelLine[pos=1.2](O,V3){$v_7$}
\tkzLabelLine[pos=1.2](O,V4){$v_8$}
\tkzLabelLine[pos=1.2](O,V5){$v_9$}
\end{scope}
\end{tikzpicture}
\caption{On the left, polygon $P$ with each term of the sum $u = v_1 - v_2 + v_3 - \dotsb + v_n$ highlighted.
The linear span $U$ of the vector $u$ intersects the boundary of $P$ on $b$ and $-b$.
In the middle, the (oblique) projections $\pi_U(\pm a_i)$ into $U$ are shown to pack in the segment $[-b,b]$.
On the right, rotate so $U$ aligns with $x$-axis, reflect and relabel vectors so they are on the right half of the plane.}
\label{fig:balancing}
\end{figure}
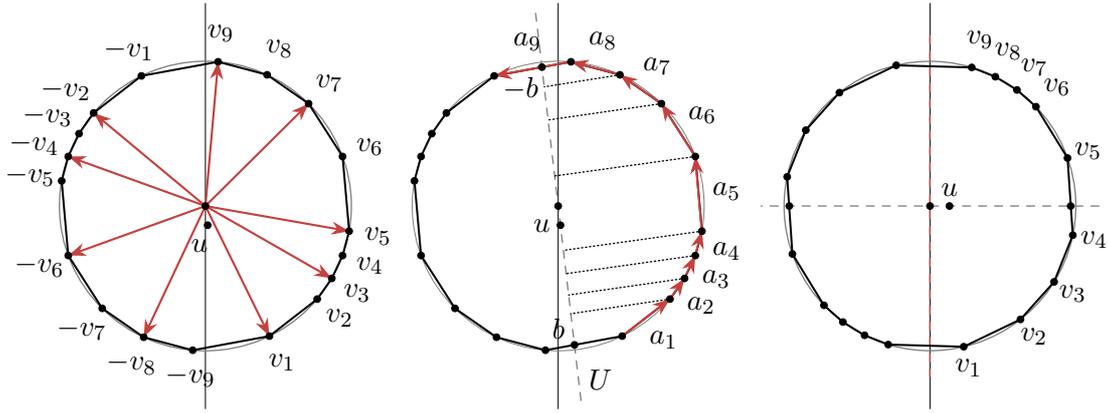

Putting this all together, we may assume that $u = \sum_{i=1}^{n}(-1)^{i-1} v_i = (\beta, 0)$ with $-1 \leq \beta \leq 1$ and $v_i = (\cos \theta_i, \sin \theta_i)$ where $-\pi/2 \leq \theta_1 \leq \dotsb \leq \theta_n \leq \pi/2$.
Write $v_i = (x_i, y_i)$ for every $1 \leq i \leq n$, so we have
\begin{equation}
\label{eq:beta}
    \beta = \sum_{i=1}^n (-1)^{i-1}x_i.
\end{equation}

We are going to analyse three cases: $\beta = -1$, $\beta = 1$ and $0 \leq \abs{\beta} < 1$.
For the first two cases, we will use the following observation: if there is a disjoint collection of pairs of indices $(i_1, j_1), \dotsc, (i_m, j_m)$ such that for every $1 \leq k \leq m$, we either have $v_{i_k} = v_{j_k}$ or $v_{i_k} = -v_{j_k}$, then we have $\prob[\big]{\norm{\sigma_V}_2 \leq 1} \geq 2^{m-n}$.
This is indeed the case as $\norm{\sum_{i=1}^{n}\eps_i v_i}_2 = \norm{u}_2 \leq 1$ for any sign sequence $\eps \in \set{-1,1}^n$ such that $\eps_{q} = (-1)^{q-1}$ if $q \notin \set{i_1, \dotsc, i_m, j_1, \dotsc, j_m}$, and such that $\eps_{i_k} = - \eps_{j_k}$ when $v_{i_k} = v_{j_k}$ and that $\eps_{i_k} = \eps_{j_k}$ when $v_{i_k} = -v_{j_k}$.

Suppose that $\beta = -1$.
Let $1 \leq 2r \leq n$ be the even index that maximises $x_{2r}$.
This implies that $x_{2k+1} \geq x_{2k}$ for every $1 \leq k < r$ and $x_{2k} \geq x_{2k-1}$ for every $r < k \leq (n-1)/2$.
In view of \eqref{eq:beta}, we have
\begin{equation}
\label{eq:beta=-1}
    -1 = \beta = x_1 + x_n + \sum_{k=1}^{r-1}(x_{2k+1} - x_{2k}) + \sum_{\mathclap{k=r+1}}^{\mathclap{(n-1)/2}}(x_{2k-1} - x_{2k}) - x_{2r} \geq -x_{2r} \geq -1,
\end{equation}
where equality occurs if and only if $v_1 = -v_n$, $v_{2k} = v_{2k+1}$ for $k \in [1,(n-1)/2]\setminus \set{r}$, and $v_{2r} = (1,0)$; see \Cref{fig:extreme} (left) for a concrete example.
This leads to $(n-1)/2$ disjoint pairs of identical or opposite vectors, which gives $\prob[\big]{\norm{\sigma_V}_2 \leq 1} \geq 2^{-(n-1)/2}$.

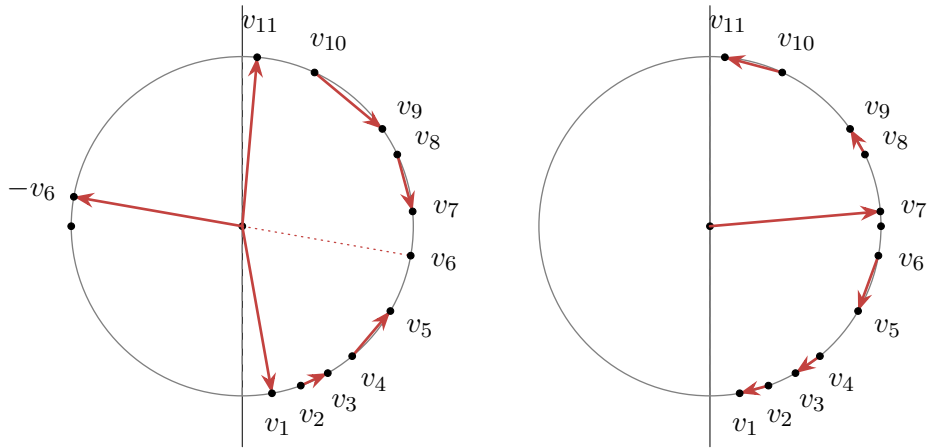
\begin{figure}[!ht]
\centering
\begin{tikzpicture}[scale=2.25]
\definecolor{col_a}{RGB}{195, 66, 63};
\tkzDefPoints{0/-1.3/BT,0/1.3/BB}
\tkzDrawSegment[opacity=0](BT,BB)
\tkzDefPoints{0/0/O, 1/0/R, -1/0/B}
\tkzDefPoints{0/-1/D, 0/1/U}
\tkzDefPointsBy[rotation=center O angle -80](R){V1}
\tkzDefPointsBy[rotation=center O angle -70](R){V2}
\tkzDefPointsBy[rotation=center O angle -60](R){V3}
\tkzDefPointsBy[rotation=center O angle -50](R){V4}
\tkzDefPointsBy[rotation=center O angle -30](R){V5}
\tkzDefPointsBy[rotation=center O angle -10](R){V6}
\tkzDefPointsBy[rotation=center O angle 5](R){V7}
\tkzDefPointsBy[rotation=center O angle 25](R){V8}
\tkzDefPointsBy[rotation=center O angle 35](R){V9}
\tkzDefPointsBy[rotation=center O angle 65](R){V10}
\tkzDefPointsBy[rotation=center O angle 85](R){V11}
\tkzDefPointsBy[rotation=center O angle 180-10](R){U6}
\tkzDrawCircle[line width=0.5pt, gray](O,R)
\tkzDrawSegment[line width=0.5pt, gray, dashed](D,U)
\tkzDrawPoints[size=2.5, fill=black](O,B)
\tkzDrawSegments[col_a, line width=1pt, arrows=-Stealth](O,V1 O,V11 V2,V3 V4,V5 O,U6 V8,V7 V10,V9)
\tkzDrawSegments[col_a, line width=0.5pt, dotted](O,V6)
\tkzDrawPoints[size=2.5, fill=black](V1,V2,V3,V4,V5,V6,V7,V8,V9,V10,V11,U6)
\tkzLabelLine[pos=1.2](O,V1){$v_1$}
\tkzLabelLine[pos=1.2](O,V2){$v_2$}
\tkzLabelLine[pos=1.2](O,V3){$v_3$}
\tkzLabelLine[pos=1.2](O,V4){$v_4$}
\tkzLabelLine[pos=1.2](O,V5){$v_5$}
\tkzLabelLine[pos=1.2](O,V6){$v_6$}
\tkzLabelLine[pos=1.2](O,V7){$v_7$}
\tkzLabelLine[pos=1.2](O,V8){$v_8$}
\tkzLabelLine[pos=1.2](O,V9){$v_9$}
\tkzLabelLine[pos=1.2](O,V10){$v_{10}$}
\tkzLabelLine[pos=1.2](O,V11){$v_{11}$}
\tkzLabelLine[pos=1.25](O,U6){$-v_6$}
\end{tikzpicture}
\qquad 
\begin{tikzpicture}[scale=2.25]
\definecolor{col_a}{RGB}{195, 66, 63};
\tkzDefPoints{0/-1.3/BT,0/1.3/BB}
\tkzDrawSegment[opacity=0](BT,BB)
\tkzDefPoints{0/0/O, 1/0/R, 1/0/B}
\tkzDefPoints{0/-1/D, 0/1/U}
\tkzDefPointsBy[rotation=center O angle -80](R){V1}
\tkzDefPointsBy[rotation=center O angle -70](R){V2}
\tkzDefPointsBy[rotation=center O angle -60](R){V3}
\tkzDefPointsBy[rotation=center O angle -50](R){V4}
\tkzDefPointsBy[rotation=center O angle -30](R){V5}
\tkzDefPointsBy[rotation=center O angle -10](R){V6}
\tkzDefPointsBy[rotation=center O angle 5](R){V7}
\tkzDefPointsBy[rotation=center O angle 25](R){V8}
\tkzDefPointsBy[rotation=center O angle 35](R){V9}
\tkzDefPointsBy[rotation=center O angle 65](R){V10}
\tkzDefPointsBy[rotation=center O angle 85](R){V11}
\tkzDrawCircle[line width=0.5pt, gray](O,R)
\tkzDrawSegment[line width=0.5pt, gray, dashed](D,U)
\tkzDrawPoints[size=2.5, fill=black](O,B)
\tkzDrawSegments[col_a, line width=1pt, arrows=-Stealth](V2,V1 V4,V3 V6,V5 V10,V11 V8,V9 O,V7)
\tkzDrawPoints[size=2.5, fill=black](V1,V2,V3,V4,V5,V6,V7,V8,V9,V10,V11)
\tkzLabelLine[pos=1.2](O,V1){$v_1$}
\tkzLabelLine[pos=1.2](O,V2){$v_2$}
\tkzLabelLine[pos=1.2](O,V3){$v_3$}
\tkzLabelLine[pos=1.2](O,V4){$v_4$}
\tkzLabelLine[pos=1.2](O,V5){$v_5$}
\tkzLabelLine[pos=1.2](O,V6){$v_6$}
\tkzLabelLine[pos=1.2](O,V7){$v_7$}
\tkzLabelLine[pos=1.2](O,V8){$v_8$}
\tkzLabelLine[pos=1.2](O,V9){$v_9$}
\tkzLabelLine[pos=1.2](O,V10){$v_{10}$}
\tkzLabelLine[pos=1.2](O,V11){$v_{11}$}
\end{tikzpicture}
\caption{The pairings involved in the characterisations of the case $\beta = -1$ (left) and the case $\beta = 1$ (right).}
\label{fig:extreme}
\end{figure}

Now suppose that $\beta = 1$ and let $1 \leq 2r+1 \leq n$ be the odd index that maximises $x_{2r+1}$.
This implies that $x_{2k} \geq x_{2k-1}$ for every $1 \leq k \leq r$ and $x_{2k} \geq x_{2k+1}$ for every $r < k \leq (n-1)/2$.
Again, using \eqref{eq:beta}, we have
\begin{equation}
\label{eq:beta=1}
    1 = \beta = x_{2r+1} + \sum_{k=1}^{r}(x_{2k-1} - x_{2k}) + \sum_{\mathclap{k=r+1}}^{\mathclap{(n-1)/2}}(x_{2k+1} - x_{2k})  \leq x_{2r+1} \leq 1,
\end{equation}
where equality occurs if and only if $v_{2r+1} = (1,0)$, and $v_{2k-1} = v_{2k}$ for $k \in  [1,r]$ and $v_{2k+1}=v_{2k}$ for $k \in [r+1,(n-1)/2]$; see \Cref{fig:extreme} (right) for a concrete example.
This leads to $(n-1)/2$ disjoint consecutive pairs of identical vectors, which gives $\prob[\big]{\norm{\sigma_V}_2 \leq 1} \geq 2^{-(n-1)/2}$.

From now onwards, we may assume that $0 \leq \abs{\beta} < 1$.
We define the following norm on $\RR^2$:
\begin{align*}
    \normstar{(x,y)} \defined \norm[\bigg]{ \p[\bigg]{\frac{x}{\sqrt{1-\abs{\beta}}}, \, y } }_2.
\end{align*}
In other words, $\normstar{}$ is the standard Euclidean norm $\norm{}_2$ after a carefully chosen stretching in the $x$ direction.
A simple fact is the following.
\begin{claim}
\label{cl:ellipse}
The $\normstar{}$-distance of the point $u$ to the $\norm{}$-unit circle is at least $\sqrt{1 - \abs{\beta}}$.
\end{claim}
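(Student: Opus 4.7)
The plan is to establish the claim by direct computation on the unit circle, exploiting the symmetry of the setup to reduce casework. First, observe that the reflection $(x,y) \mapsto (-x,y)$ is an isometry for both $\norm{\,\cdot\,}_2$ and $\normstar{\,\cdot\,}$, and sends $u = (\beta, 0)$ to $(-\beta, 0)$. Hence we may assume without loss of generality that $0 \leq \beta < 1$, in which case $\abs{\beta} = \beta$.

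Next, the inequality to be proved is equivalent to showing that for every $p = (x,y) \in \RR^2$ with $x^2 + y^2 = 1$, we have $\normstar{p - u}^2 \geq 1 - \beta$. Writing out the definition of $\normstar{\,\cdot\,}$ and substituting $y^2 = 1 - x^2$ gives
\begin{equation*}
    \normstar{p - u}^2 = \frac{(x-\beta)^2}{1-\beta} + (1 - x^2).
\end{equation*}
After clearing the denominator $1 - \beta > 0$ and expanding, the required inequality simplifies to $\beta(x-1)^2 \geq 0$, which is trivially true. Equality holds precisely when $p = (1,0)$, confirming that the bound is attained (so the claim is sharp) and that the minimiser lies on the $x$-axis, as one would geometrically expect.

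There is no real obstacle: the only subtlety is noticing that the $\normstar{\,\cdot\,}$-unit ball is the ellipse with semi-axes $\sqrt{1-\abs{\beta}}$ and $1$ inscribed in the unit disc and tangent to it at $(0,\pm 1)$, which makes the identity $\beta(x-1)^2 \geq 0$ geometrically transparent: translating this ellipse by $u$ along the $x$-axis, it first meets the unit circle at $(1,0)$ after being scaled by $\sqrt{1-\beta}$. A completely equivalent argument could be carried out by Lagrange multipliers on the constrained optimisation $\min \normstar{p-u}^2$ subject to $\norm{p}_2 = 1$, but the direct algebraic verification sketched above is shorter.
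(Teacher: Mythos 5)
Your proof is correct and follows essentially the same route as the paper: parametrise points of the unit circle, substitute $y^2 = 1 - x^2$, and reduce the inequality to a nonnegative perfect square (your $\beta(x-1)^2 \geq 0$ is exactly the paper's identity $(x-\beta)^2 + (1-\abs{\beta})(1-x^2) = (1-\abs{\beta})^2 + \abs{\beta}(1-sx)^2$, with your reflection-based WLOG $\beta \geq 0$ replacing the paper's explicit sign $s$). The only nitpick is that equality holds at every point of the circle when $\beta = 0$, not only at $(1,0)$, but this does not affect the claim.
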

\begin{proof}
Recall that $u = (\beta,0)$ and let $s \in \set{-1,1}$ be such that $\beta = s \abs{\beta}$.
For every $x, y \in \RR$ satisfying $x^2 + y^2 = 1$, we have
\begin{align*}
    \normstar{(x,y) - u} &= \sqrt{ \frac{(x - \beta)^2}{1 - \abs{\beta}} + y^2}
    = \sqrt{\frac{(x - \beta)^2 + (1 - \abs{\beta})(1 - x^2)}{1 - \abs{\beta}}} \\
    &= \sqrt{\frac{(1 - \abs{\beta})^2 + \abs{\beta}(1 - 2 s x + x^2)}{1-\abs{\beta}}} \\
    &= \sqrt{\frac{(1 - \abs{\beta})^2 + \abs{\beta}(1 - s x)^2}{1-\abs{\beta}}}
    \geq \sqrt{1 - \abs{\beta}},
\end{align*}
which finishes the proof.
\end{proof}

This means that if a vector $v \in \RR^2$ is such that $\normstar{v - u} \leq \sqrt{1 - \abs{\beta}}$, then $\norm{v}_2 \leq 1$.
Equivalently, if $\normstar{v} \leq \sqrt{1 - \abs{\beta}}$, then $\norm{u + v}_2 \leq 1$.
This fact is illustrated in \Cref{fig:ellipse}.

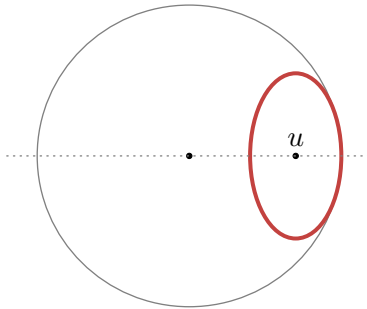
\begin{figure}[!ht]
\centering
\begin{tikzpicture}[scale=2]
\definecolor{col_a}{RGB}{195, 66, 63};
\def\bb{0.7}
\tkzDefPoints{0/0/O, 1/0/R, \bb/0/B}
\tkzDefPoints{-1.2/0/X1, 1.2/0/X2}
\tkzDefPoints{0/-1.1/Y1, 0/1.1/Y2}
\tkzDrawCircle[line width=0.5pt, gray](O,R)
\tkzDrawPoints[size=2, fill=black](O,B)
\tkzDrawEllipse[col_a, line width=1.5pt](B,1-\bb,{sqrt(1-\bb)},0)
\tkzDrawSegment[line width=0.5pt, gray, dotted](X1,X2)
\tkzLabelPoint[above](B){$u$}
\end{tikzpicture}
\caption{The $\normstar{}$-ball of radius $\sqrt{1 - \abs{\beta}}$ centred at $u = (\beta,0)$ is fully contained in the unit $\norm{}_2$-ball centred at the origin.}
\label{fig:ellipse}
\end{figure}

Call a collection $\cP$ of disjoint pairs of indices in $[n]$ a \emph{pairing}.
We say that a pairing $\cP$ is \emph{parity-balanced} if for every $(i,j) \in \cP$, $i$ and $j$ are of different parities, and \emph{suitable} if it is parity-balanced and
\begin{equation*}
    \sum_{(i,j) \in \cP} \normstar{v_i - v_j} \leq \frac{\sqrt{1 - \abs{\beta}}}{2}.
\end{equation*}
A key observation is the following.

\begin{claim}
\label{cl:SuitableMachtingIsEnough}
If $\cP$ is a suitable pairing, then $\prob[\big]{\norm{\sigma_V}_2 \leq 1} \geq 2^{\card{\cP}-n}$.
\end{claim}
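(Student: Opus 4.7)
The plan is to exhibit $2^{\card{\cP}}$ distinct sign sequences $\eps \in \set{-1,+1}^n$, each of which certifies $\norm{\sigma_V}_2 \leq 1$; dividing by $2^n$ then yields the claimed bound.

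To construct these signings, I start from the \emph{default} signing $\eps_k^{0} \defined (-1)^{k-1}$, which by construction gives $\sigma_V = u = (\beta,0)$. For each pair $\set{i,j} \in \cP$, I then allow the independent choice either to keep the defaults $(\eps_i,\eps_j) = ((-1)^{i-1},(-1)^{j-1})$ or to flip both signs within the pair; indices outside $\bigcup \cP$ retain their default sign. This produces $2^{\card{\cP}}$ distinct signings. Because $\cP$ is parity-balanced, the default contribution of a pair $\set{i,j}$ is $w_{ij} \defined (-1)^{i-1} v_i + (-1)^{j-1} v_j = \pm(v_i - v_j)$, and flipping the pair replaces this contribution by $-w_{ij}$, so in particular $\normstar{w_{ij}} = \normstar{v_i - v_j}$.

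If $S \subseteq \cP$ denotes the subset of pairs that have been flipped in a given signing, the resulting sum is
\begin{equation*}
    \sigma = u - 2 \sum_{\set{i,j} \in S} w_{ij}.
\end{equation*}
By the triangle inequality (in the $\normstar{}$-norm) and the suitability assumption,
\begin{equation*}
    \normstar{\sigma - u} \leq 2 \sum_{\set{i,j} \in \cP} \normstar{v_i - v_j} \leq \sqrt{1 - \abs{\beta}}.
\end{equation*}
By \Cref{cl:ellipse}, this places $\sigma$ inside the closed unit disk, so $\norm{\sigma}_2 \leq 1$, as required. I do not expect any real obstacle: the argument is simply a combination of the parity-balanced pairing (which ensures flipping a pair acts as $\pm w_{ij}$ against the alternating signing producing $u$), the suitability inequality (providing the quantitative bound in the stretched norm), and \Cref{cl:ellipse} (transferring $\normstar{}$-smallness into $\norm{}_2$-smallness).
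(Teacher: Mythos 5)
Your proposal is correct and follows essentially the same route as the paper: both arguments enumerate the $2^{\card{\cP}}$ signings obtained from the alternating signing that produces $u$ by flipping pairs of $\cP$, bound the $\normstar{}$-displacement from $u$ by $2\sum_{\set{i,j}\in\cP}\normstar{v_i - v_j} \leq \sqrt{1-\abs{\beta}}$ via the suitability condition, and invoke \Cref{cl:ellipse} to conclude each such sum lies in the unit disk. Your explicit identification of the pair contribution as $\pm(v_i - v_j)$ via parity-balance is exactly what the paper encodes with its $(-1)^{i}\,\eta(i,j)$ factor.
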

\begin{proof}
Note that any sequence $\eta \from \cP \to \set{0,1}$ satisfies
\begin{equation}
\label{eq:signings}
    \norm[\Big]{u + 2 \! \sum_{(i,j) \in \cP} (-1)^{i} \, \eta(i,j) \, (v_{i} - v_{j}) }_2 \leq 1.
\end{equation}
Indeed, this follows from \Cref{cl:ellipse} since
\begin{equation*}
    \normstar[\Big]{\sum_{(i,j) \in \cP} (-1)^{i} \eta(i,j) (v_i - v_j)} \leq 2\sum_{(i,j) \in \cP} \normstar{v_{i} - v_{j}} \leq \sqrt{1 - \abs{\beta}}.
\end{equation*}
Finally, note that each expression in \eqref{eq:signings} is obtained from $u = \sum_{t = 1}^n (-1)^{t-1}v_t$ by flipping the signs of $v_i$ and $v_j$ when $\eta(i,j) = 1$.
This gives then at least $2^{\card{\cP}}$ signings whose sum lies in the unit disk, so we indeed have $\prob[\big]{\norm{\sigma_V}_2 \leq 1} \geq 2^{\card{\cP}-n}$.
\end{proof}

Before we find a suitable pairing, we show that we always have an almost maximal pairing $\cP$ that is almost suitable.

\begin{claim}
\label{cl:integral}
There is a parity-balanced pairing $\cP$ in $[n]$ with $\sum_{(i,j) \in \cP}\normstar{v_i - v_j} \leq \pi \sqrt{1 - \abs{\beta}}$ and $\card{\cP} \geq (n-3)/2$.
\end{claim}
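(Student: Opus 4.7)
The plan is to restrict attention to pairings whose pairs are all of the form $(i, i+1)$; since consecutive indices have opposite parity, any such pairing is automatically parity-balanced, and each pair satisfies $\normstar{v_i - v_{i+1}} \leq \ell_i$ where $\ell_i \defined \int_{\theta_i}^{\theta_{i+1}} \rho(\theta)\,\dtheta$ and $\rho(\theta) \defined \sqrt{(1 - \abs{\beta}\cos^2\theta)/(1-\abs{\beta})}$ is the speed of the parametrization $\theta \mapsto (\cos\theta, \sin\theta)$ in the $\normstar{}$ metric. First I would consider the two natural maximum matchings $\cP_1 \defined \set[\big]{(2k-1, 2k) \st 1 \leq k \leq (n-1)/2}$ and $\cP_2 \defined \set[\big]{(2k, 2k+1) \st 1 \leq k \leq (n-1)/2}$; since they use disjoint arcs whose union is all of the $n-1$ arcs, the lighter of the two has total weight at most $L'/2$, where $L' \defined \sum_{i=1}^{n-1} \ell_i \leq \pi/\sqrt{1-\abs{\beta}}$. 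In particular, when $\abs{\beta} \leq 1/2$ we already have $L'/2 \leq \pi\sqrt{1-\abs{\beta}}$, so either $\cP_1$ or $\cP_2$ satisfies the claim.

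For the hard regime $\abs{\beta} > 1/2$, I would split $[-\pi/2, \pi/2]$ into the \emph{flat region} $\set[\big]{\theta \st \abs{\sin\theta} \leq \sqrt{1-\abs{\beta}}}$, where $\rho \leq \sqrt{2}$ and whose Euclidean length is at most $2\arcsin\sqrt{1-\abs{\beta}} \leq \pi\sqrt{1-\abs{\beta}}$, and two \emph{peaked regions} near $\theta = \pm\pi/2$. Partitioning the sorted indices into three consecutive clusters (lower-peaked, flat, upper-peaked), I would build $\cP$ as the union of an intra-cluster consecutive pairing in each. Because $n$ is odd, exactly one or exactly three of the cluster sizes are odd, so at most $3$ vertices are unpaired and $\abs{\cP} \geq (n-3)/2$. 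The flat contribution, bounded via the lighter of its two intra-flat consecutive matchings, is at most $\sqrt{2}\,\pi\sqrt{1-\abs{\beta}}/2$, comfortably within the target.

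The main obstacle is to control the contributions of the two peaked clusters. Their within-cluster $\normstar{}$-arclengths (after the change of variable $\psi = \pi/2 - \theta$) are of the form $\int_0^{\arcsin\sqrt{\abs{\beta}}} \sqrt{(1-\abs{\beta}\sin^2\psi)/(1-\abs{\beta})}\,\d\psi$, which can reach order $\sqrt{\abs{\beta}/(1-\abs{\beta})}$---exceeding $\pi\sqrt{1-\abs{\beta}}$ when $\abs{\beta}$ is close to $1$. To close this gap I would exploit the constraint $\sum_{i=1}^n (-1)^{i-1} v_i = (\beta, 0)$, particularly its second coordinate $\sum_i (-1)^{i-1}\sin\theta_i = 0$. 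Combined with the monotonicity of $\sin\theta_i$ along the sorted sequence, a telescoping argument on each peaked cluster should produce a bound on the \emph{occupied} $\normstar{}$-arclength in terms of $\sqrt{1-\abs{\beta}}$. The remaining slack in $\abs{\cP} \geq (n-3)/2$, which permits dropping one further pair beyond the natural unpaired vertex, can then be used to absorb the single heaviest peaked pair, bringing the total weight below $\pi\sqrt{1-\abs{\beta}}$.
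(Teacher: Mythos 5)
Your reduction to consecutive pairs and your treatment of the regime $\abs{\beta} \leq 1/2$ are correct: consecutive pairs are automatically parity-balanced, the two natural consecutive matchings partition the $n-1$ arcs, and the lighter one has weight at most $\tfrac{1}{2}\int_{-\pi/2}^{\pi/2}\rho(\theta)\dtheta \leq \pi/(2\sqrt{1-\abs{\beta}}) \leq \pi\sqrt{1-\abs{\beta}}$. The genuine gap is in the regime $\abs{\beta} > 1/2$, where the step you defer --- controlling the peaked clusters via the \emph{second} coordinate of the constraint --- is precisely the step that cannot work. Near $\theta = \pm\pi/2$ the $\normstar{}$-length of an arc grows linearly in the angular gap with factor about $1/\sqrt{1-\abs{\beta}}$, whereas $y = \sin\theta$ varies only quadratically in $\pi/2 - \abs{\theta}$. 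Telescoping $\sum_i(-1)^{i-1}\sin\theta_i = 0$ along the sorted sequence only bounds the total paired $y$-displacement by $O(1)$ (not by $O(\sqrt{1-\abs{\beta}})$), and even a perfect bound on occupied $y$-measure puts no useful cap on the weight of a peaked cluster: points spread over an angular window of constant width near a pole have consecutive matchings of $\normstar{}$-weight $\Theta(1/\sqrt{1-\abs{\beta}})$ while their paired $y$-displacements are negligible. Dropping one further pair removes only the single heaviest pair, which does not help when the excess is spread over many moderate pairs.

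What actually rules out such configurations --- and what the paper uses --- is the \emph{first} coordinate, $\sum_i(-1)^{i-1}\cos\theta_i = \beta$. Since $(\cos\theta_i)_i$ is unimodal along the sorted sequence, the consecutive pairing that switches phase exactly at the index maximising $\cos\theta_i$ makes every paired difference of cosines non-negative, and telescoping as in \eqref{eq:beta=1} and \eqref{eq:beta=-1} bounds their total by $1-\abs{\beta}$. The paper then parametrises arc length by $t = \cos\theta$, so the weight of the pairing is a sum of integrals of the increasing density $g$ over disjoint $t$-intervals of total length at most $1-\abs{\beta}$; pushing all the mass towards $t=1$ gives $2\int_{(1+\abs{\beta})/2}^{1} g(t)\dt \leq \pi\sqrt{1-\abs{\beta}}$. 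Near the poles the cosine varies linearly in the angle, so the first-coordinate budget of $1-\abs{\beta}$ converts into an arc-length budget of order $(1-\abs{\beta})\cdot g(0) = \sqrt{1-\abs{\beta}}$ there --- exactly the control your $y$-coordinate route lacks. To repair your proof you would need to replace the second-coordinate telescoping and the three angular clusters by this anchored pairing and first-coordinate rearrangement argument.
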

\begin{proof}
Recall that $v_i = (\cos \theta_i,\sin \theta_i)$, and suppose that $v_i$ and $v_j$ are such that $0 \leq \theta_i \leq \theta_j$.
The $\normstar{}$-distance from $v_i$ to $v_j$ is at most the $\normstar{}$-length of the circular arc that connects $v_i$ to $v_j$, therefore
\begin{equation*}
    \normstar{v_i - v_j} \leq \int_{\theta_i}^{\theta_j} \sqrt{\normstar[\big]{\p[\big]{(\sin\theta)^2,(\cos\theta)^2}}} \dtheta = \int_{\theta_i}^{\theta_j} \sqrt{\frac{(\sin \theta)^2}{1 - \abs{\beta}} + (\cos \theta)^2} \dtheta.
\end{equation*}
By the change of variables $\theta = \arccos t$, the integral above is equal to
\begin{equation*}
    \int_{\theta_i}^{\theta_j} \sqrt{\frac{1 - \abs{\beta}(\cos\theta)^2}{1 - \abs{\beta}}} \dtheta
    = \int_{x_j}^{x_i} \sqrt{\frac{1 - \abs{\beta}t^2}{(1 - \abs{\beta})(1-t^2)}} \dt.
\end{equation*}
For convenience, we define $g \from [0,1) \to \RR_{\geq 0}$ as
\begin{equation*}
    g(t) \defined \sqrt{\frac{1 - \abs{\beta}t^2}{(1 - \abs{\beta})(1-t^2)}}
\end{equation*}
and note that $g$ is strictly increasing in its domain.
When $0 \leq \theta_i \leq \theta_j$, we have
\begin{equation}
\label{eq:length}
    \normstar{v_i - v_j} \leq \int_{x_j}^{x_i} g(t) \dt.
\end{equation}
Moreover, we note that if $\theta_i \leq \theta_j \leq 0$, instead of \eqref{eq:length}, we have
\begin{equation}
\label{eq:length2}
    \normstar{v_i - v_j} \leq \int_{x_i}^{x_j} g(t) \dt,
\end{equation}
while if $\theta_i \leq 0 \leq \theta_j$, instead of \eqref{eq:length}, we have
\begin{equation}
\label{eq:length3}
    \normstar{v_i - v_j} \leq  \normstar{v_i - (1,0)} + \normstar{(1,0) - v_j } \leq  \int_{x_i}^{1} g(t) \dt + \int_{x_j}^{1} g(t) \dt.
\end{equation}

Suppose that $0 \leq \beta < 1$, let $1 \leq 2r+1 \leq n$ be the odd index that maximises $x_{2r+1}$ and consider the pairing
\begin{equation*}
    \cP \defined \set[\big]{(v_{2k}, v_{2k-1}) \st 1 \leq k \leq r} \cup \set[\big]{(v_{2k},v_{2k+1}) \st r+1\leq k \leq (n-1)/2}.
\end{equation*}
Note that $\cP$ is parity-balanced and $\card{\cP} = (n-1)/2$.
It remains to show that the sum
\begin{equation*}
    S \defined \sum_{(i,j) \in \cP} \normstar{v_i - v_j} = \sum_{k=1}^{r}\normstar{v_{2k} - v_{2k-1}} + \sum_{\mathclap{k=r+1}}^{\mathclap{(n-1)/2}}\normstar{v_{2k} - v_{2k+1}}
\end{equation*}
is not too large.
Assume initially that for no pair $(i,j) \in \cP$ we have $\theta_i$ and $\theta_j$ of opposite signs.
From \eqref{eq:length} and \eqref{eq:length2}, we have
\begin{equation}
\label{eq:estimate}
     S \leq \sum_{k=1}^{r}\int_{x_{2k-1}}^{x_{2k}} g(t) \dt  + \sum_{\mathclap{k=r+1}}^{\mathclap{(n-1)/2}} \; \int_{x_{2k+1}}^{x_{2k}} g(t) \dt.
\end{equation}
As $g$ is increasing in $[0,1)$, each of the integrals in \eqref{eq:estimate} can be bounded from above by shifting the intervals of integration to the right as much as we can, while maintaining them internally disjoint.
In other words, if we define
\begin{equation*}
    I \defined \sum_{k=1}^{r}(x_{2k} - x_{2k-1}), \quad \text{ and } \quad I' \defined  \sum_{\mathclap{k=r+1}}^{\mathclap{(n-1)/2}}(x_{2k} - x_{2k+1}),
\end{equation*}
then from \eqref{eq:estimate} we have
\begin{equation}
\label{eq:estimate2}
    S \leq \int_{1-I}^{1} g(t) \dt  + \int_{1-I'}^{1} g(t) \dt,
\end{equation}
and moreover, we can use \eqref{eq:beta=1} to estimate
\begin{equation}
\label{eq:ineq-beta}
     I + I' = \sum_{k=1}^{r}(x_{2k} - x_{2k-1}) + \sum_{\mathclap{k=r+1}}^{\mathclap{(n-1)/2}}(x_{2k} - x_{2k+1}) = x_{2r+1} - \beta \leq 1 -\abs{\beta}.
\end{equation}
Since we can exchange mass from one integral to the other in \eqref{eq:estimate2}, we have
\begin{equation}
\label{eq:estimate3}
     S \leq 2\int_{1 - \frac{I + I'}{2}}^{1} g(t) \dt
     \leq 2\int_{\frac{1+\abs{\beta}}{2}}^{1} g(t) \dt.
\end{equation}
Before we proceed with the proof, note that the same estimate also holds if there is a pair $(v_i, v_j)$ with $\theta_i$ and $\theta_j$ of opposite signs.
Indeed, we would apply \eqref{eq:length3} instead of \eqref{eq:length} or \eqref{eq:length2} in the estimate \eqref{eq:estimate}.
In effect, this is equivalent to splitting the pair $(v_i,v_j)$ into $(v_i, (1,0))$ and $(v_j,(1,0))$, which does not affect the inequality \eqref{eq:ineq-beta} since we would be adding and subtracting $1$ from the left hand side.

Coming back to the integral, we obtain from \eqref{eq:estimate3} that
\begin{align*}
    S &\leq \frac{2}{\sqrt{1 - \abs{\beta}}} \int_{\frac{1+\abs{\beta}}{2}}^{1} \sqrt{\frac{1 - \abs{\beta}t^2}{1-t^2}} \dt
    \leq \frac{2\sqrt{1 - \abs{\beta} \p[\big] {\frac{1+\abs{\beta}}{2}}^2}}{\sqrt{1 - \abs{\beta}}} \int_{\frac{1+\abs{\beta}}{2}}^{1} \frac{1}{\sqrt{1-t^2}} \dt. \\
    &= \sqrt{\abs{\beta}^2 + 3\abs{\beta} + 4} \cdot \arccos \p[\big]{\frac{1+\abs{\beta}}{2}}
    \leq 2\sqrt{2} \arccos \p[\Big]{\frac{1+\abs{\beta}}{2}}.
\end{align*}
Since we have $\arccos(x) \leq (\pi/2) \sqrt{1 - x}$ for $0 \leq x \leq 1$, it follows that
\begin{align*}
    S  \leq \pi \sqrt{2} \sqrt{\frac{1}{2} - \frac{\abs{\beta}}{2}} = \pi \sqrt{1 - \abs{\beta}},
\end{align*}
as we claimed.

The case $-1 < \beta \leq 0$ is almost identical.
Let $1 \leq 2r \leq n$ be the even index that maximises $x_{2r}$ and consider the pairing
\begin{equation*}
    \cP' \defined \set[\big]{(v_{2k+1}, v_{2k}) \st 1 \leq k \leq r-1} \cup \set[\big]{(v_{2k-1},v_{2k}) \st r+1\leq k \leq (n-1)/2}.
\end{equation*}
We want to show that
\begin{equation*}
    S' \defined \sum_{(i,j) \in \cP'} \normstar{v_i - v_j} = \sum_{k=1}^{r-1}\normstar{v_{2k+1} - v_{2k}} + \sum_{\mathclap{k=r+1}}^{\mathclap{(n-1)/2}}\normstar{v_{2k-1} - v_{2k}}
\end{equation*}
is not very large.
Assume as before that for no pair $(v_i,v_j)$ just described, we have $\theta_i$ and $\theta_j$ of opposite signs.
From \eqref{eq:beta=-1}, we obtain the key estimate
\begin{equation*}
    \sum_{k=1}^{r-1}(x_{2k+1} - x_{2k}) + \sum_{\mathclap{k=r+1}}^{\mathclap{(n-1)/2}} (x_{2k-1} - x_{2k}) = \beta - x_1 - x_n + x_{2r} \leq 1 - \abs{\beta}.
\end{equation*}
From now onwards, we proceed identically to the case $\beta > 0$.
We use \eqref{eq:length}, \eqref{eq:length2}, and \eqref{eq:length3} to obtain
\begin{equation*}
    S' \leq \sum_{k=1}^{r-1}\int_{x_{2k}}^{x_{2k+1}} g(t) \dt  + \sum_{\mathclap{k=r+1}}^{\mathclap{(n-1)/2}} \; \int_{x_{2k}}^{x_{2k-1}} g(t) \dt
    \leq 2\int_{\frac{1+\abs{\beta}}{2}}^{1} g(t) \dt.
\end{equation*}
This gives then $S' \leq \pi \sqrt{1 - \abs{\beta}}$, but this time, we only have $\card{\cP'} = (n-3)/2$ rather than $\card{\cP} = (n-1)/2$.
\end{proof}

To complete the proof of \Cref{thm:lower-bound}, let $\cP$ be the pairing from \Cref{cl:integral} and partition $\cP$ into $7$ pieces, each of size at least $\floor{\card{\cP}/7}$.
At least one of these pieces $\cP'$ will be suitable, since $\pi/7 < 1/2$, which gives
\begin{equation*}
    \prob[\big]{\norm{\sigma_V}_2 \leq 1} \geq \frac{2^{\floor{\card{\cP}/7}}}{2^n} \geq \frac{1}{4} \cdot \frac{2^{n/14}}{2^n} = \frac{1}{4} \cdot \p[\big]{2^{-13/14}}^n,
\end{equation*}
and we are done as $2^{-13/14} \approx 0.5253$.
\end{proof}

\begin{remark}
\label{rmk:elliptic}
In the proof of \Cref{cl:integral}, we were a bit wasteful in estimating the integral $\int_\alpha^1 g(t) \dt$ for $\alpha = \arcsin(\frac{1+\abs{\beta}}{2})$.
In fact, this integral can be expressed explicitly as  $\p{E(\pi/2, \sqrt{\abs{\beta}}) - E(\alpha, \sqrt{\abs{\beta}})}/ \sqrt{1 - \abs{\beta}}$ where $E(x,k)$ is the incomplete elliptic integral of the second kind.
It seems numerically that the inequality $S \leq (\pi/3)\sqrt{1-\abs{\beta}}$ holds.
This improvement would propagate to a better exponent in \Cref{thm:lower-bound}.
Moreover, the last step of the argument where we partition $\cP$ into seven pieces has also a lot of slack.
A more delicate argument there also improves the constant in the base of the exponent.
\end{remark}


\section{Odd counterexamples to Erdős' conjecture}
\label{sec:radius1}

Our goal now is to prove \Cref{thm:radius1}, which demonstrates the double-jump phase transition as discussed in \Cref{sec:intro} and disproves \Cref{conj:HJNSodd}.
To do this, we in fact prove the more general \Cref{thm:construction-small-radius}, which we now restate for convenience.

\constsmallradius*
\begin{proof}
Let $k_1^+, k_1^-, k_2, \dotsc, k_d \geq 0$ be integers such that
\begin{equation*}
    k_1^+ + k_1^- + k_2 + \dotsb + k_d = n,
\end{equation*}
and such that $k_1^+$ and $k_2$ are even, while $k_1^-, k_3, \dotsb, k_d$ are all odd.
Denote by $e_1, \dotsc, e_d$ the standard basis vectors in $\RR^d$.
Let $0 < \beta < \pi/2$ be small enough so that $\sin \beta < 1/n$ and consider the perturbed basis vectors
\begin{align*}
    e_1^+ &\defined \p[\big]{ \cos\beta, \phantom{+}\sin\beta,\, 0,\, \dotsc,\, 0 }, \\
    e_1^- &\defined \p[\big]{ \cos\beta, -\sin\beta,\, 0,\, \dotsc,\, 0 }.
\end{align*}

The collection of vectors $v_1, \dotsc, v_n$ we consider consists of $k_1^+$ copies of $e_1^+$, $k_1^-$ copies of $e_1^-$, and $k_i$ copies of $e_i$ for $2 \leq i \leq d$.
It will be convenient to reparametrise the vectors in the following way.
Consider a partition of the indices
\begin{equation*}
    [n] = I_1^+ \sqcup I_1^- \sqcup I_2 \sqcup \dotsb \sqcup I_d,
\end{equation*}
where $\card{I_1^+} = k_1^+$, $\card{I_1^-} = k_1^-$, and $\card{I_i} = k_i$ for $2 \leq i \leq d$.
We write
\begin{align*}
    \sigma_1^+ \defined \sum_{i \in I_1^+} \eps_i e_1^+, \quad
    \sigma_1^- \defined \sum_{i \in I_1^-} \eps_i e_1^-, \quad
    \sigma_2 \defined \sum_{i \in I_2} \eps_i e_2, \quad
    \dotsb \quad
    \sigma_d \defined \sum_{i \in I_d} \eps_i e_d,
\end{align*}
and the sign sums
\begin{align*}
    \cE_1^+ \defined \sum_{i \in I_1^+} \eps_i, \quad
    \cE_1^- \defined \sum_{i \in I_1^-} \eps_i, \quad
    \cE_2 \defined \sum_{i \in I_2} \eps_i, \quad
    \dotsb \quad
    \cE_d \defined \sum_{i \in I_d} \eps_i.
\end{align*}
Let $\sigma \defined \sigma_1^+ + \sigma_1^- + \dotsb + \sigma_d$ and note that
\begin{equation*}
    \sigma = \p[\big]{(\cE_1^+ + \cE_1^-)\cos\beta , \, \cE_2 + (\cE_1^+ - \cE_1^-)\sin\beta, \, \cE_3, \, \dotsc, \, \cE_d}.
\end{equation*}
Finally, for $1 \leq i \leq d$, write $\pi_i \from \RR^d \to \RR$ for the $i$-coordinate projection.

The main goal now is to understand which constraints on the sequence $\eps_1, \dotsc, \eps_n$ are imposed by the condition
\begin{equation}
\label{eq:small-radius-condition}
\norm{\sigma}_2^2 = \pi_1(\sigma)^2 + \dotsb + \pi_d(\sigma)^2 \leq d - 1.
\end{equation}
First we deal with the coordinates with $i \geq 3$.
Notice that for such $i$, we have that $k_i$ is odd, so $\pi_i(\sigma) = \cE_i \in 2\ZZ + 1$.
Therefore, we have $\pi_i(\sigma)^2 \geq 1$ and if $\pi_i(\sigma)^2 \neq 1$, then $\pi_i(\sigma)^2 \geq 9$.
From \eqref{eq:small-radius-condition}, we then have
\begin{equation*}
    \pi_3(\sigma)^2 + \dotsb + \pi_d(\sigma)^2 \leq d - 1,
\end{equation*}
which implies that $\pi_i(\sigma)^2 = 1$ for all $i \geq 3$.
This imposes that
\begin{equation}
\label{eq:Value-Eps3-to-d}
\cE_3 = \pm 1, \; \cE_4 = \pm 1, \; \dotsc \, , \; \cE_{d-1} = \pm 1, \; \cE_d = \pm 1.
\end{equation}
Therefore, conditionally on \eqref{eq:Value-Eps3-to-d} being satisfied, we have that \eqref{eq:small-radius-condition} holds if and only if
\begin{equation}
\label{eq:smaller-radius-condition}
\pi_1(\sigma)^2 + \pi_2(\sigma)^2 \leq 1.
\end{equation}

Now we consider the first two coordinates of $\sigma$ in light of \eqref{eq:smaller-radius-condition}, starting with the second.
Recall that $\pi_2(\sigma) =  \cE_2 + (\cE_1^+ - \cE_1^-)\sin\beta$.
Since $k_2$ is even, we have $\cE_2 \in 2\ZZ$.
As $\sin \beta \leq 1/n$, we have $(\cE_1^+ - \cE_1^-)\sin\beta \leq (k_1^+ + k_1^-)/n < 1$, which implies that either $\cE_2 = 0$ or $\pi_2(\sigma)^2 > 1$.
The second option would violate \eqref{eq:smaller-radius-condition}, thus we have $\cE_2 = 0$.

Finally, notice that $k_1^+$ and $k_1^-$ have different parities, so $(\cE_1^+ + \cE_1^-)^2 \geq 1$ and $(\cE_1^+ - \cE_1^-)^2 \geq 1$.
But since $\cE_2 = 0$, condition \eqref{eq:smaller-radius-condition} translates to
\begin{equation*}
    \pi_1(\sigma)^2 + \pi_2(\sigma)^2  = (\cos\beta)^2(\cE_1^+ + \cE_1^-)^2 + (\sin\beta)^2(\cE_1^+ - \cE_1^-)^2 \leq 1,
\end{equation*}
which can only be satisfied if $(\cE_1^+ + \cE_1^-)^2 = (\cE_1^+ - \cE_1^-)^2 = 1$, in which case \eqref{eq:smaller-radius-condition} is an equality.
Indeed, this implies that $\cE_1^+ + \cE_1^- = \pm 1$ and $\cE_1^+ - \cE_1^- = \pm 1$.
As $k_1^+$ is even and $k_1^-$ is odd, we then must have $\cE_1^+ = 0$ and $\cE_1^- = \pm 1$.
Therefore, we have
\begin{equation*}
    \prob[\big]{\norm{\sigma}_2 \leq \sqrt{d-1}}
    = \prob[\big]{\cE_1^+ = 0, \cE_1^- = \pm 1, \cE_2 = 0, \cE_3 = \pm 1, \dotsc, \cE_d = \pm 1},
\end{equation*}
and since $\cE_1^+, \cE_1^-, \cE_2, \dotsc, \cE_d$ are all independent, we have
\begin{equation}
\label{eq:construction-prob}
    \prob[\big]{\norm{\sigma}_2 \leq \sqrt{d-1}} = \frac{1}{2^n} \binom{k_1^+}{k_1^+/2}
    \binom{k_1^- + 1}{(k_1^- + 1)/2}
    \binom{k_2}{k_2/2}
    \prod_{i \geq 3}\binom{k_i + 1}{(k_i + 1)/2}.
\end{equation}
To obtain \eqref{eq:construction-prob}, we have used that, as $k_2$ is even, we have $\prob{\cE_2 = 0} = \binom{k_2}{k_2/2}/2^{k_2}$ and, as $k_3$ is odd, we have
\begin{equation*}
    \prob{\cE_3 = \pm 1} = \p[\bigg]{\binom{k_3}{(k_3-1)/2}+\binom{k_3}{(k_3+1)/2}}/2^{k_3} = \binom{k_3+1}{(k_3+1)/2}/2^{k_3},
\end{equation*}
and similarly for all $\cE_1^+, \cE_1^-, \cE_2, \dotsc, \cE_d$ according to parity.
We take $k_1^+, k_1^-, k_2, \dotsc, k_d$ to be as close as possible to $n/(d+1)$ while adhering to the parity constraints discussed above.
Using the bound $\binom{2m}{m} \leq 4^m / \sqrt{\pi m}$, we have
\begin{align*}
    \prob[\big]{\norm{\sigma}_2 \leq \sqrt{d-1}}
    &\leq \frac{2^{d-1}}{\sqrt{\pi^{d+1} k_1^+ (k_1^- + 1) k_2 (k_3 + 1) \dotsb (k_d+1)/2^{d+1}}} \leq \frac{C_d}{n^{(d+1)/2}},
\end{align*}
for some constant $C_d >0$ depending only on $d$.
\end{proof}


\section{Orthogonal, simplicial and mixed constructions}
\label{sec:optimal-constructions}

In this section, we prove a collection of results regarding the choice of vectors $V = \set{v_1, \dotsc, v_n} \subseteq \SS^{d-1}$ which may minimise $\prob[\big]{\norm{\sigma_V}_2 \leq \sqrt{d}}$.
While we are able to establish several key results, there is still much left to be understood; see \Cref{sec:conclusion} for a thorough discussion of the problems which remain open.

This section has a significant number of proofs and is divided into a few subsections.
We start with a brief outline.
First, in \Cref{subsec:2d}, we prove \Cref{thm:BetterConstant}, which shows that simplicial constructions outperform orthogonal constructions in two dimensions.

We then proceed to the case of higher dimensions, and give a proof of \Cref{thm:OrthoBetterThanSimplex}, which states that orthogonal constructions perform better than simplicial constructions when the dimension is high enough.
To show this, we study the asymptotic behaviour of optimal orthogonal constructions in \Cref{subsec:orthogonal}, and then prove a lower bound for simplicial constructions in \Cref{subsec:simplicial}.

Finally, in \Cref{subsec:mixed} we prove \Cref{thm:MixedIsBetter}, which shows that constructions of mixed type outperform orthogonal constructions for all $d\geq 3$.

\subsection{Two dimensions}
\label{subsec:2d}

In this subsection, we give a proof of \Cref{thm:BetterConstant}, which states that, in two dimensions, the optimal choice of vectors to minimise $\prob{\norm{\sigma_V}_2 \leq \sqrt{2}}$ is not of orthogonal type, as it is outperformed by a construction of simplicial type.

\begin{proof}[Proof of \Cref{thm:BetterConstant}]
By performing a rotation, we may assume that $u_1=(1,0)$, $u_2=(-1/2,\sqrt{3}/2)$ and $u_3=(-1/2,-\sqrt{3}/2)$.
We are then given that $\sigma_V = \sum_{i=1}^{3k} \eps_i v_i$, where $\eps_1, \dotsc, \eps_{3k}$ are independent Rademacher random variables, and where the vectors $v_i$ are defined as
\begin{equation*}
    v_i\defined\begin{cases}
        u_1=(1,0) &\text{if }1\leq i\leq k,\\
        u_2 = (-1/2,\sqrt{3}/2) &\text{if }k+1\leq i\leq 2k,\\
        u_3 = (-1/2,-\sqrt{3}/2) &\text{if }2k+1\leq i\leq 3k.
    \end{cases}
\end{equation*}

We start by showing that $\norm{\sigma_V}_2\leq\sqrt{2}$ only occurs when $\norm{\sigma_V}_2 = 0$.
Indeed, define $s_1 \defined (\eps_1+ \dotsb + \eps_{k})/2$, $s_2 \defined (\eps_{k+1} + \dotsb + \eps_{2k})/2$, and $s_3 \defined (\eps_{2k + 1} + \dotsb + \eps_{3k})/2$.

\begin{claim}
\label{cl:sigmaVSmallis0}
We have $\norm{\sigma_V}_2\leq\sqrt{2}$ if and only if $s_1 = s_2 = s_3$.
\end{claim}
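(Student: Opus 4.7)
The plan is to compute $\sigma_V$ in closed form in terms of $s_1, s_2, s_3$, and then exploit the integrality of these quantities (together with the fact $u_1 + u_2 + u_3 = 0$) to conclude that $\norm{\sigma_V}_2 \leq \sqrt 2$ forces $\sigma_V = 0$.

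First I would note that by definition of $s_1, s_2, s_3$ we have
\begin{equation*}
    \sigma_V = 2s_1 u_1 + 2s_2 u_2 + 2s_3 u_3,
\end{equation*}
and substituting the explicit coordinates of $u_1, u_2, u_3$ yields
\begin{equation*}
    \sigma_V = \p[\big]{2s_1 - s_2 - s_3,\; \sqrt{3}(s_2 - s_3)},
\end{equation*}
so that $\norm{\sigma_V}_2^2 = (2s_1 - s_2 - s_3)^2 + 3(s_2 - s_3)^2$.

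Next I would record the crucial integrality fact: since $2s_j = \sum_{i \in I_j} \eps_i$ is a sum of $k$ Rademacher signs, $2s_j$ is an integer congruent to $k$ modulo $2$, so in particular $2s_j$ and $2s_{j'}$ always have the same parity. Hence $s_2 - s_3 \in \ZZ$ and $2(s_1 - s_2) \in 2\ZZ$. Now suppose $\norm{\sigma_V}_2 \leq \sqrt 2$. Then $3(s_2 - s_3)^2 \leq 2$ with $s_2 - s_3$ an integer, forcing $s_2 = s_3$. Substituting back gives $(2s_1 - 2s_2)^2 \leq 2$ with $2(s_1 - s_2)$ an even integer, forcing $s_1 = s_2$. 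So $s_1 = s_2 = s_3$.

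For the converse, if $s_1 = s_2 = s_3$ then $\sigma_V = 2s_1(u_1 + u_2 + u_3) = 0$ since the three vertices of an equilateral triangle centred at the origin sum to zero, and in particular $\norm{\sigma_V}_2 = 0 \leq \sqrt 2$. There is no genuine obstacle here; the only subtlety is remembering that the parity constraint on $2(s_1 - s_2)$ is not just integrality but \emph{even} integrality, which is what upgrades the bound $|2s_1 - 2s_2| \leq \sqrt 2$ (which by itself would only rule out $|2s_1-2s_2| \geq 2$) to equality.
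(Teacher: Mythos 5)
Your proof is correct and follows essentially the same route as the paper: compute $\sigma_V = (2s_1 - s_2 - s_3, \sqrt{3}(s_2 - s_3))$, use that the $2s_j$ all have the same parity so the differences $s_i - s_j$ are integers, and conclude that $\norm{\sigma_V}_2^2 \leq 2$ forces $\sigma_V = 0$. The only cosmetic difference is that the paper symmetrises the quadratic form as $2\sum_{i<j}(s_i-s_j)^2$ and argues that not exactly two differences can vanish, whereas you eliminate $s_2 - s_3$ and then $s_1 - s_2$ coordinate by coordinate; both arguments rest on the same integrality facts.
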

\begin{proof}
Let $s_1\defined (\eps_1+ \dotsb + \eps_{k})/2$, $s_2 \defined (\eps_{k+1} + \dotsb + \eps_{2k})/2$, and $s_3 \defined (\eps_{2k + 1} + \dotsb + \eps_{3k})/2$, noting that $s_1$, $s_2$, and $s_3$ are either all integers or all half-integers.
We then have that $\sigma_V = (2s_1-s_2-s_3,\sqrt{3}(s_2-s_3))$; it may thus be easily computed that
\begin{align*}
    \norm{\sigma_V}_2^2 &= 4(s_1^2 + s_2^2 + s_3^2 - s_1 s_2 - s_1 s_3 - s_2 s_3) \\
    &= 2\p[\big]{(s_1 - s_2)^2 + (s_1 - s_3)^2 + (s_2 - s_3)^2}.
\end{align*}
Each difference $s_i - s_j$, for $1\leq i < j \leq 3$, is an integer and it cannot be the case that exactly two of these differences are zero.
It is therefore clear that $\norm{\sigma_V}_2^2$ is either $0$ or at least $4$.
However, $\norm{\sigma_V}_2 = 0$ implies $s_1 = s_2 = s_3$.
\end{proof}
Therefore, we have$\norm{\sigma_V}_2\leq\sqrt{2}$ if and only if
\begin{equation*}
    \eps_1+ \dotsb + \eps_{k} = \eps_{k+1} + \dotsb + \eps_{2k} = \eps_{2k + 1} + \dotsb + \eps_{3k}.
\end{equation*}
We may therefore see that
\begin{equation}
\label{eq:BetterConstantFranel}
    \prob[\big]{\norm{\sigma_{V}}_2 \leq \sqrt{2}} = 2^{-n} \sum_{i=0}^{k}\binom{k}{i}^3.
\end{equation}
Finally, applying \Cref{prop:franel} with $q=3$, we obtain
\begin{equation*}
    \prob[\big]{\norm{\sigma_{V}}_2 \leq \sqrt{2}} = 2^{-n}\p[\big]{1 + o(1)}\frac{2^{3k + 1}}{k\pi \sqrt{3}} = \p[\big]{1 + o(1)}\frac{2\sqrt{3}}{n\pi},
\end{equation*}
as wanted.
\end{proof}
\begin{remark}
\label{rem:BetterConstant}
The proof above can be adjusted so it holds for any value of $n$ as follows.
We will take $k_1$ copies of $(1,0)$, $k_2$ copies of $(-\sqrt{3}/2,1/2)$, and $k_3$ copies of $(-\sqrt{3}/2,-1/2)$, where $k_1 + k_2 + k_3 = n$ and, for all $j \in \set{1,2,3}$, we have $k_j \equiv n \mod 2$ and $\abs{k_j -  n/3} = O(1)$.
The proof proceeds exactly as above, except that now the sum in \eqref{eq:BetterConstantFranel} becomes
\begin{equation*}
    \prob[\big]{\norm{\sigma_{V}}_2 \leq \sqrt{2}} = 2^{-n} \sum_{i \in \ZZ} \prod_{j=1}^3 \binom{k_j}{(k_j+x)/2 + i},
\end{equation*}
where $x = 1$ if $n$ is odd and $x = 0$ if $n$ is even.
\Cref{prop:perturbed-franel-2} may now be applied to deduce the same approximation as in the above proof, as required.
\end{remark}

\subsection{Orthogonal type}
\label{subsec:orthogonal}
Recall that a vector $V$ is of orthogonal type if, up to a global rotation, we have that for every $i$ there is $j$ such that $v_i = e_j$, where $e_1, \dotsc, e_d$ denotes the standard orthogonal basis of $\RR^d$.
A vector of orthogonal type is then characterised by the \emph{multiplicity vector} $(m_1, \dotsc, m_d)$, where $V$ consists of $m_i$ copies of $e_i$.
The first goal of this section is to determine the optimal choice of multiplicity vector for $V$ of orthogonal type when minimising $\prob[\big]{\norm{\sigma_V}_2 \leq \sqrt{d}}$.

We say that a multiplicity vector $(m_1, \dotsc, m_d)$ has \emph{parity vector} $(h_1, \dotsc, h_d) \in \set{0,1}^d$ if $m_i \equiv h_i \mod{2}$ for $i$.
The proposition below shows that the parity vector plays a crucial role in determining the optimal choice of $V$.
For a parity vector $h \in \set{0,1}^d$, denote by $\cS_h$ the set
\begin{equation*}
    \cS_h \defined \set[\Big]{(x_1, \dotsc, x_d) \in \ZZ^d \st x_1^2 + \dotsb + x_d^2 \leq d \text{ and } x_i \equiv h_i \mod{2} \text{ for all $i \in [d]$}}.
\end{equation*}
As we will now see, intuitively we want to choose $h$ such that $\card{\cS_h}$ is as small as possible.

\begin{proposition}
\label{prop:orthogonal-generic}
Fix $h \in \set{0,1}^d$ and let $m_1, \dotsc, m_d, n$ be positive integers with $m_1 + \dotsb + m_d = n$ and $m_i \equiv h_i \mod{2}$ for all $i$.
Let $V =  \set{v_1, \dotsc, v_n}$ consist of $m_i$ copies of $e_i$ for all $i$, then:
\begin{enumerate}
    \item If $m_i \to \infty$ as $n \to \infty$ for all $i$, then we have
    \begin{equation*}
        \prob[\big]{\norm{\sigma_V}_2 \leq \sqrt{d}} = \p[\big]{1 + o(1)} \card{\cS_h} \p[\Big]{ \displaystyle \frac{2^d}{\pi^d m_1 \dotsb m_d}}^{1/2}.
    \end{equation*}
    \item If for some $i$, we have $m_i = o(\log n)$ as $n \to \infty$, then
    \begin{equation*}
        \liminf_{n \to \infty} \prob[\big]{\norm{\sigma_V}_2 \leq \sqrt{d}}n^{d/2} = \infty.
    \end{equation*}
\end{enumerate}
\end{proposition}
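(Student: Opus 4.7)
The plan is to derive an exact formula for the probability and analyse it asymptotically. Because the vectors in $V$ are coordinate basis vectors, the random sum $\sigma_V$ splits into $d$ independent one-dimensional signed sums: writing $I_i$ for the set of indices with $v_j = e_i$ and $\cE_i \defined \sum_{j \in I_i}\eps_j$, we have $\sigma_V = (\cE_1,\dotsc,\cE_d)$, with $\cE_i \equiv m_i \equiv h_i \mod{2}$ and $\abs{\cE_i} \leq m_i$. The event $\norm{\sigma_V}_2 \leq \sqrt{d}$ is then exactly $(\cE_1,\dotsc,\cE_d) \in \cS_h$, so
\begin{equation*}
    \prob[\big]{\norm{\sigma_V}_2 \leq \sqrt{d}} = \sum_{(x_1,\dotsc,x_d) \in \cS_h}\prod_{i=1}^d \frac{1}{2^{m_i}}\binom{m_i}{(m_i+x_i)/2}.
\end{equation*}
Both parts of the proposition will follow by analysing this identity.

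For part (i), the set $\cS_h$ is fixed and finite, and every $x_i$ appearing in it satisfies $\abs{x_i} \leq \sqrt{d}$. Since each $m_i\to\infty$, the Stirling approximation of \Cref{fact:stirling} yields
\begin{equation*}
    \frac{1}{2^{m_i}}\binom{m_i}{(m_i + x_i)/2} = \p[\big]{1+o(1)}\sqrt{\frac{2}{\pi m_i}},
\end{equation*}
uniformly over $x_i$ of the right parity with $\abs{x_i} = O(1)$ (the uniformity is obtained by comparing with the central binomial). Multiplying these estimates across $i \in [d]$ and summing over the finitely many lattice points of $\cS_h$ gives the stated asymptotic.

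For part (ii), assume without loss of generality that $m_d = o(\log n)$. I retain only the single term $(x_1,\dotsc,x_d) = (h_1,\dotsc,h_d) \in \cS_h$ to obtain
\begin{equation*}
    \prob[\big]{\norm{\sigma_V}_2 \leq \sqrt{d}} \geq \prod_{i=1}^d \frac{1}{2^{m_i}}\binom{m_i}{(m_i + h_i)/2}.
\end{equation*}
For $i < d$, Stirling (or a direct elementary estimate) gives $\binom{m_i}{(m_i+h_i)/2}/2^{m_i} \geq c/\sqrt{m_i+1}$ for some absolute $c > 0$, while for $i = d$ the trivial bound gives $\binom{m_d}{(m_d+h_d)/2}/2^{m_d} \geq 2^{-m_d} = n^{-o(1)}$. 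Applying AM--GM to $m_1+1,\dotsc,m_{d-1}+1$, whose sum is at most $n+d$, yields $\prod_{i<d}\sqrt{m_i+1} \leq C_d\, n^{(d-1)/2}$, and hence $\prob[\big]{\norm{\sigma_V}_2 \leq \sqrt{d}}\, n^{d/2} \geq \Omega_d\p{n^{1/2 - o(1)}} \to \infty$, as required.

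The main conceptual step is the coordinatewise decomposition; once that is in hand, everything reduces to routine binomial estimates, and the only mild subtlety is ensuring that the Stirling approximation is uniform in the fixed shift $x_i = O(1)$.
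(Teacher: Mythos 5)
Your proof is correct and follows essentially the same route as the paper: the coordinatewise decomposition, the exact identity $\prob{\norm{\sigma_V}_2 \leq \sqrt{d}} = 2^{-n}\sum_{x \in \cS_h}\prod_i\binom{m_i}{(m_i+x_i)/2}$, and the Stirling analysis in case (i) are identical. The only difference is in case (ii), where the paper invokes its appendix result \Cref{prop:lower-bound-prod-binomial} after splitting indices into small and large, whereas you lower-bound every factor except the known-small one by the elementary near-central estimate $c/\sqrt{m_i+1}$ and finish with AM--GM; this is a valid, slightly more self-contained variant that still yields the decisive gain of $n^{1/2-o(1)}$.
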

\begin{proof}
Let $\sigma_1, \dotsc, \sigma_d$ denote the sums of the signs associated with the vectors $v_i$ that correspond to $e_1, \dotsc, e_d$, respectively.
Therefore, we may write
\begin{equation*}
    \sigma_V = \eps_1 v_1 + \dotsb + \eps_n v_n = \sigma_1 e_1 + \dotsb + \sigma_d e_d,
\end{equation*}
where
\begin{equation*}
    \prob[\big]{\sigma_i = x_i} = \begin{cases}
    \frac{1}{2^{m_i}}\binom{m_i}{(m_i+x_i)/2} & \text{if $x_i \equiv m_i \mod{2}$} \\
    0 & \text{if $x_i \not\equiv m_i \mod{2}$},
\end{cases}
\end{equation*}
Note that $\norm{\sigma_V}_2 \leq \sqrt{d}$ if and only if
\begin{equation*}
    \sigma_1^2 + \dotsb + \sigma_d^2 \leq d.
\end{equation*}
Note also that this inequality holds if and only if $(\sigma_1, \dotsc, \sigma_d) \in \cS_h$, and therefore
\begin{equation*}
    \prob[\big]{\norm{\sigma_V}_2 \leq \sqrt{d}} = \prob[\big]{\sigma \in \cS_h} = \!\! \sum_{(x_1,\dotsc,x_d) \in \cS_h} \, \prod_{i=1}^d \prob[\big]{\sigma_i = x_i}.
\end{equation*}
It follows that
\begin{equation}
\label{eq:identity-ortho}
    \prob[\big]{\norm{\sigma_V}_2 \leq \sqrt{d}} = \frac{1}{2^n} \! \sum_{(x_1,\dotsc,x_d) \in \cS_h} \, \prod_{i=1}^d \binom{m_i}{(m_i + x_i)/2}.
\end{equation}

We now split into cases, as in the statement of \Cref{prop:orthogonal-generic}, first considering the situation wherein all $m_i\to\infty$, and then the case in which $m_i = o(\log n)$ for some index $i$.

\begin{description}[leftmargin=0em]
\item[Case (i)]
If $x_i = h_i \mod{2}$ and $x_i = O(1)$, then it follows from \Cref{fact:stirling} that
\begin{equation*}
    \prob[\big]{\sigma_i = x_i} = \p[\big]{1 + o(1)} \sqrt{\frac{2}{\pi m_i}}.
\end{equation*}
Since solutions $(x_1, \dotsc, x_d) \in \cS_h$ have all the coordinates $x_i$ bounded, we are indeed in the above situation, and thus
\begin{align*}
    \prob[\big]{\norm{\sigma_V}_2 \leq \sqrt{d}} &= \sum_{(x_1,\dotsc,x_d) \in \cS(h)} \; \prod_{i=1}^d \p[\big]{1 + o(1)}\sqrt{\frac{2}{\pi m_i}}  \\
    &= \p[\big]{1 + o_d(1)} \card{\cS_h} \p[\Big]{\frac{2^d}{\pi^d m_1 \dotsb m_d}}^{1/2},
\end{align*}
This completes Case (i).
\item[Case (ii)]
Reorder indices so that there is a maximal $1 \leq t < d$ such that $m_1, \dotsc, m_t = o(\log n)$ and $m_i \to \infty$ for all $i \geq t+1$.
Note that $h \in \cS_h$ and that $\prob[\big]{\sigma_i = h_i} \geq 1/2^{m_i}$, and hence
\begin{align*}
    \prob[\big]{\norm{\sigma_V}_2 \leq \sqrt{d}} &\geq \prod_{i=1}^d \frac{1}{2^{m_i}} \binom{m_i}{(m_i + h_i)/2} \\
    &\geq \prod_{i=1}^t \frac{1}{2^{m_i}} \cdot \prod_{i=t+1}^d \frac{1}{2^{m_i}} \binom{m_i}{(m_i + h_i)/2} \\
    &\geq \p[\big]{1 + o_d(1)} \frac{1}{2^{m_1 + \dotsb + m_t}} \p[\Big]{\frac{2(d-t)}{\pi}}^{(d - t)/2} \frac{1}{n^{(d-t)/2}}.
\end{align*}
where in the last line, we have applied \Cref{prop:lower-bound-prod-binomial}.
Thus, as $m_1+\dotsb+m_t = o(\log n)$, we find that $\prob[\big]{\norm{\sigma_V}_2 \leq \sqrt{d}} = \omega(n^{-d/2})$, as claimed. \qedhere
\end{description}
\end{proof}

From \Cref{prop:orthogonal-generic}, it is clear that an optimal construction of orthogonal type must have $m_i = \p[\big]{1/d + o(1)}n$ for all $i$; one may deal with cases wherein $m_i \not\to \infty$ and $m_i \neq o(\log n)$ by passing into a subsequence.
Therefore we now assume $m_i = n/d + o(n)$ and focus on determining for which parity vector $h \in \set{0,1}^d$ the set $\cS_h$ is as small as possible.
For a particular value of $n$, however, we can only choose $h \in \set{0,1}^d$ such that $h_1 + \dotsb + h_d$ has the same parity as $n$, and so we consider the cases of $n$ odd and even separately.
We consider the following quantities
\begin{equation*}
    f_0(d) \defined \;\; \min_{\mathclap{\substack{h \in \set{0,1}^d\\h_1 + \dotsb + h_d \equiv d (2)}}} \quad \card{\cS_h}
    \quad \text{and } \quad
    f_1(d) \defined \;\; \min_{\mathclap{\substack{h \in \set{0,1}^d\\h_1 + \dotsb + h_d \not\equiv d (2)}}} \quad \card{\cS_h}.
\end{equation*}
In other words, $f_0(d)$ is the minimum of $\card{\cS_h}$ when $h$ has an even number of zeros and $f_1(d)$ is the minimum of $\card{\cS_h}$ when $h$ has an odd number of zeros.
We will make use of the following inequality.

\begin{proposition}
\label{prop:optimal-orthogonal-ineq}
We have that $(15/2^7) 2^d \leq f_0(d),f_1(d) \leq 2^d$ for every $d \geq 1$.
\end{proposition}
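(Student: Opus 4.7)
The plan is to treat the two bounds separately. For the upper bound I would give two explicit parity vectors. Taking $h = (1, \dotsc, 1)$ yields $h_1 + \dotsb + h_d = d$, so this contributes to $f_0(d)$; any $x \in \cS_h$ has each $x_i$ odd, hence $x_i^2 \geq 1$, and the constraint $\sum_i x_i^2 \leq d$ forces $x_i = \pm 1$ for every $i$, giving $\card{\cS_h} = 2^d$. Replacing one coordinate of this $h$ by $0$ produces a parity vector of opposite-parity sum, contributing to $f_1(d)$; the same reasoning pins the zero-parity coordinate to $0$ and yields $\card{\cS_h} = 2^{d-1} \leq 2^d$.

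For the lower bound, fix any $h \in \set{0, 1}^d$ and write $p = h_1 + \dotsb + h_d$ and $m = d - p$. I would restrict attention to the structured solutions $x \in \cS_h$ in which $x_i = \pm 1$ at every position with $h_i = 1$ while $x_i \in \set{0, \pm 2}$ at every position with $h_i = 0$. For such $x$ one has $\sum_i x_i^2 = p + 4 \card{\set{i \st h_i = 0,\, x_i \neq 0}}$, so $x \in \cS_h$ precisely when at most $\lfloor m/4 \rfloor$ of the even-parity coordinates are non-zero. Counting gives
\begin{equation*}
    \card{\cS_h} \geq 2^p \cdot T(m),
    \qquad \text{where } T(m) \defined \sum_{j=0}^{\lfloor m/4 \rfloor} \binom{m}{j} 2^j,
\end{equation*}
and the claimed lower bound $f_0(d), f_1(d) \geq (15/2^7) 2^d$ reduces to the auxiliary inequality $T(m) \geq (15/128) \cdot 2^m$ for every $m \geq 0$.

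I would establish this last inequality by combining explicit tabulation for small $m$ with an asymptotic estimate for large $m$. For $m = 0, 1, \dotsc, 7$ the ratios $T(m)/2^m$ are $1, \tfrac{1}{2}, \tfrac{1}{4}, \tfrac{1}{8}, \tfrac{9}{16}, \tfrac{11}{32}, \tfrac{13}{64}, \tfrac{15}{128}$, attaining the target $15/128$ with equality at $m = 7$ (which is the source of the constant: it corresponds to $h = \mathbf{0}$ in dimension $d = 7$). Extending the table through $m = 15$ handles the remaining near-minimum values, which occur at $m \equiv 3 \pmod 4$. For $m \geq 16$ I would use the single-term bound $T(m) \geq \binom{m}{\lfloor m/4 \rfloor} 2^{\lfloor m/4 \rfloor}$ together with \Cref{fact:stirling}, giving a lower bound on $T(m)/2^m$ of order $2^{m(H(1/4) - 3/4)}/\sqrt{m}$; since $H(1/4) = 2 - \tfrac{3}{4}\log_2 3 > \tfrac{3}{4}$ this grows without bound and clears $15/128$ with ample room. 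The main obstacle is that the inequality $T(m) \geq (15/128) \cdot 2^m$ is tight at $m = 7$ and dips close to equality again at $m = 11$ and $m = 15$, where the single-term Stirling estimate alone would not suffice, so the short explicit computation through $m = 15$ is unavoidable before the asymptotic regime takes over cleanly.
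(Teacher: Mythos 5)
Your argument is correct and reaches the conclusion by a genuinely different, more direct route than the paper. The paper derives the two-sided bound as a corollary of a complete determination of $f_0(d)$ and $f_1(d)$ (\Cref{prop:optimal-orthogonal} in \Cref{app:orthogonal}): it computes $f(t,d)$ exactly for $t \leq 7$, observes that $f(t,d)/2^d$ is non-decreasing in $d$, and then eliminates all larger $t$ by comparing $f(t,t)$ against $f(6,t)$ and $f(7,t)$, which requires a numerical check over the entire range $8 \leq t \leq 168$ before a Stirling estimate takes over. You instead lower-bound $\card{\cS_h}$ for an \emph{arbitrary} parity vector by counting the structured solutions with odd-parity coordinates in $\set{\pm 1}$ and even-parity coordinates in $\set{0,\pm 2}$, which collapses the whole proposition to the single one-variable inequality $T(m) \geq (15/128)\,2^m$; your $T(m)$ is exactly the quantity $F(t)$ that the paper introduces inside \Cref{cl:bound-ftd} for a different comparison. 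Your route is shorter, and it isolates exactly where the constant $15/2^7$ comes from (the all-zero parity vector in dimension $7$), at the cost of not yielding the exact values of $f_0$ and $f_1$, which the paper also uses for its remark that $f_0(d) > f_1(d)$ for $d \neq 2$. Your upper-bound constructions are the paper's $f(0,d) = 2^d$ and $f(1,d) = 2^{d-1}$ and are fine.

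One caveat on the final inequality: the claim that the single-term bound $T(m) \geq \binom{m}{\floor{m/4}} 2^{\floor{m/4}}$ clears $15/128$ ``with ample room'' for all $m \geq 16$ is too optimistic. When $m \equiv 3 \pmod 4$ the ratio $\floor{m/4}/m$ sits below $1/4$, which depresses the exponent $m\p{H(\floor{m/4}/m) + \floor{m/4}/m}$ by roughly $\tfrac{3}{4}(1 + \log_2 3) \approx 1.94$ relative to the $H(1/4)$ heuristic; at $m = 19$ the single term gives $62016/2^{19} \approx 0.1183$ against $15/128 \approx 0.1172$, a margin of about one percent that an asymptotic formula with unquantified error terms does not rigorously certify. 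The fix is routine --- extend the explicit tabulation to, say, $m \leq 35$, beyond which the residue-$3$ subsequence is comfortably clear --- and is of the same nature as the finite verification the paper itself performs, but as written the crossover point $m = 16$ is not justified.
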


We obtain \Cref{prop:optimal-orthogonal-ineq} by fully determining $f_0(d)$ and $f_1(d)$, see \Cref{prop:optimal-orthogonal}.
As the proof is long and not particularly enlightening, we defer it to \Cref{app:orthogonal}.

From \Cref{prop:orthogonal-generic,prop:optimal-orthogonal-ineq}, one easily derives the following two corollaries. The first one will be used to prove \Cref{thm:OrthoBetterThanSimplex}, and the second one will be used to prove \Cref{thm:MixedIsBetter}.

\begin{corollary}
\label{cor:best-orthogonal}
There exists an absolute constant $C>0$ such that, for any integers $d\geq 1$ and $n \geq 1$, there is a choice of vectors $V \subseteq (\SS^{d-1})^n$ of orthogonal type such that
\begin{equation*}
    \prob[\big]{\norm{\sigma_V}_2 \leq \sqrt{d}} \leq C 2^d \p[\bigg]{\frac{2d}{\pi n}}^{d/2}.
\end{equation*}
\end{corollary}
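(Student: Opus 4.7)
The plan is to construct $V$ of orthogonal type explicitly, choosing a parity vector $h \in \set{0,1}^d$ that keeps $\card{\cS_h}$ small and multiplicities $m_i$ that are as balanced as possible, then bound the resulting probability using the identity from Proposition~\ref{prop:orthogonal-generic} together with the estimate $\card{\cS_h} \leq 2^d$ from Proposition~\ref{prop:optimal-orthogonal-ineq}.

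I would first invoke Proposition~\ref{prop:optimal-orthogonal-ineq} to fix a parity vector $h \in \set{0,1}^d$ with $h_1 + \dotsb + h_d \equiv n \pmod{2}$ and $\card{\cS_h} \leq 2^d$, and then choose integer multiplicities $m_1, \dotsc, m_d \geq 0$ with $m_i \equiv h_i \pmod{2}$, $\sum_{i=1}^d m_i = n$, and each $m_i$ within a bounded additive distance of $n/d$ (for instance, by setting each $m_i$ to be the integer of parity $h_i$ closest to $n/d$ and then making small $\pm 2$ corrections on at most $O(d)$ of the $m_i$ to fix the sum). Taking $V$ to consist of $m_i$ copies of the standard basis vector $e_i$, the identity \eqref{eq:identity-ortho} combined with the explicit Stirling estimate $\binom{m}{\lfloor m/2 \rfloor} \leq 2^m \sqrt{2/(\pi m)}$ (valid for $m \geq 1$) then yields
\begin{equation*}
    \prob[\big]{\norm{\sigma_V}_2 \leq \sqrt{d}} \leq 2^d \Bigl(\frac{2^d}{\pi^d \prod_{i=1}^d m_i}\Bigr)^{\!\!1/2}.
\end{equation*}

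To close the argument, one needs the lower bound $\prod_i m_i \geq c (n/d)^d$ for an absolute $c > 0$. When $n$ is sufficiently large compared to $d$, this follows from the balancedness of the $m_i$ via a second-order Taylor expansion of $\log(1+x)$ applied to each factor $\log(1 + (m_i - n/d) \cdot d/n)$. When $n \leq 8d/\pi$, the target $C \cdot 2^d (2d/(\pi n))^{d/2}$ is already at least $1$ for $C \geq 1$, so the trivial estimate $\prob[\big]{\norm{\sigma_V}_2 \leq \sqrt{d}} \leq 1$ suffices.

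The main obstacle is the intermediate regime, where parity and integrality constraints may cause $\prod_i m_i$ to fall short of $(n/d)^d$ by a factor that can be exponentially small in $d$. To overcome this, one chooses $h$ with Hamming weight close to $d$ (which is compatible with $\card{\cS_h} \leq 2^d$ by the analysis underlying Proposition~\ref{prop:optimal-orthogonal-ineq}), so that most $m_i$ share the same parity and can all be set equal to $\lfloor n/d \rfloor$ or $\lceil n/d \rceil$. Then $\prod_i m_i$ matches $(n/d)^d$ up to a bounded multiplicative factor, and a direct verification --- using the explicit form of $\cS_h$ for these $h$, or equivalently the factorisation of $\prob[\big]{\norm{\sigma_V}_2 \leq \sqrt{d}}$ into per-coordinate probabilities --- yields the claimed bound with an absolute constant $C$.
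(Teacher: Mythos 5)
Your proposal takes essentially the same route as the paper: balanced multiplicities $m_i = n/d + O(1)$ with a parity vector $h$ chosen to respect the parity of $n$, the identity \eqref{eq:identity-ortho} from \Cref{prop:orthogonal-generic}, and the bound $\card{\cS_h} \leq 2^d$ from \Cref{prop:optimal-orthogonal-ineq}. In the regime where the corollary is actually applied ($n$ large relative to $d$, as in the proof of \Cref{thm:OrthoBetterThanSimplex}), your argument is correct and in fact more explicit about uniformity in $d$ than the paper's own proof, which simply quotes the asymptotic of \Cref{prop:orthogonal-generic}(i).

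One claim in your treatment of the intermediate regime is false, though: even with every $m_i \in \set{\floor{n/d}, \ceil{n/d}}$, the product $\prod_i m_i$ need \emph{not} match $(n/d)^d$ up to a bounded factor. For instance, $n = 2.6d$ forces (up to parity adjustments) about $0.4d$ of the $m_i$ to equal $2$ and $0.6d$ to equal $3$, giving $\prod_i m_i = (2^{0.4}3^{0.6})^d \approx (2.5496)^d$ against $(2.6)^d$ — a shortfall of roughly $(1.02)^d$. The loss comes from the non-integrality of $n/d$, not from parity mismatches, so choosing $h$ of Hamming weight close to $d$ does not repair it, and the chain $\prob{\norm{\sigma_V}_2 \leq \sqrt{d}} \leq 2^d(2/\pi)^{d/2}(\prod_i m_i)^{-1/2}$ then overshoots the target by a factor exponential in $d$. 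The corollary is nonetheless true in this regime: for $n \leq 8d/\pi$ the target exceeds $1$, and once $a \defined \floor{n/d} \geq 2$ the shortfall is at most $e^{d/(8a^2)}$ by a second-order Taylor bound, which is absorbed by the slack of order $e^{-\Theta(d/a)}$ accumulated in the Stirling bound $\binom{m}{\floor{m/2}} \leq 2^m\sqrt{2/(\pi m)}$ when the $m_i$ are bounded. That computation is exactly the ``direct verification'' you defer, and it does need to be carried out; to be fair, the paper's proof is no more careful on this point, since its $(1+o(1))$ is taken with $d$ fixed and $n \to \infty$, which is all that its applications require.
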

\begin{proof}
Consider a set of $n$ vectors $V$ that consists of $n/d + O(1)$ copies of each vector of the orthogonal basis with a parity vector $h$ that minimises $\card{\cS_h}$.
From \Cref{prop:orthogonal-generic}, we have
\begin{align*}
    \prob[\big]{\norm{\sigma_V}_2 \leq \sqrt{d}} = \p[\big]{1 + o(1)} \card{\cS_h} \p[\Big]{ \displaystyle \frac{2^d}{\pi^d (n/d)^d}}^{1/2}.
\end{align*}
Either $\card{\cS_h} = f_0(d)$ or $\card{\cS_h} = f_1(d)$, depending on the parity of $n$.
\Cref{prop:optimal-orthogonal-ineq} implies that
\begin{align*}
    \prob[\big]{\norm{\sigma_V}_2 \leq \sqrt{d}} \leq C 2^d \p[\bigg]{\frac{2d}{\pi n}}^{d/2},
\end{align*}
for some absolute constant $C>0$, as we wanted.
\end{proof}

\begin{corollary}
\label{cor:lower-bound-ortho}
For any $C' < 15/2^7$ and any $d\geq 1$, for $n$ sufficiently large and any family of vectors $V \subseteq (\SS^{d-1})^n$ of orthogonal type we have that
\begin{equation*}
    \prob[\big]{\norm{\sigma_V}_2 \leq \sqrt{d}} \geq C' 2^d \p[\bigg]{\frac{2d}{\pi n}}^{d/2}.
\end{equation*}
\end{corollary}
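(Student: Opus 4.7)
\emph{Proof plan.}
The strategy is to combine the asymptotic formula from \Cref{prop:orthogonal-generic}(i) with the uniform lower bound $f_0(d), f_1(d) \geq (15/2^7) 2^d$ given by \Cref{prop:optimal-orthogonal-ineq}. Any orthogonal-type $V$ with multiplicity vector $(m_1, \dotsc, m_d)$ summing to $n$ has an associated parity vector $h \in \set{0,1}^d$ with $h_1 + \dotsb + h_d \equiv n \mod{2}$, so $\card{\cS_h} \geq \min\set{f_0(d), f_1(d)} \geq (15/2^7) 2^d$ regardless of the parity of $n$.

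I will argue by contradiction. Assume that for some $C' < 15/2^7$, the statement fails along a sequence of orthogonal-type families $V^{(k)}$ with $n_k \defined \card{V^{(k)}} \to \infty$, that is,
\begin{equation*}
    \prob[\big]{\norm{\sigma_{V^{(k)}}}_2 \leq \sqrt{d}} < C' \cdot 2^d \p[\big]{2d/(\pi n_k)}^{d/2}.
\end{equation*}
Using pigeonhole on the $2^d$ possible parity vectors, I may pass to a subsequence along which the parity vector $h^{(k)}$ is constant, equal to some $h$ with $\card{\cS_h} \geq (15/2^7) 2^d$.

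Now split into two cases according to the minimum multiplicity. If $\min_i m_i^{(k)} \to \infty$, then \Cref{prop:orthogonal-generic}(i) yields
\begin{equation*}
    \prob[\big]{\norm{\sigma_{V^{(k)}}}_2 \leq \sqrt{d}} = (1 + o(1)) \, \card{\cS_h} \, \p[\big]{2^d / (\pi^d m_1^{(k)} \dotsb m_d^{(k)})}^{1/2}.
\end{equation*}
By AM--GM applied to $m_1^{(k)} + \dotsb + m_d^{(k)} = n_k$, we have $m_1^{(k)} \dotsb m_d^{(k)} \leq (n_k/d)^d$, so together with $\card{\cS_h} \geq (15/2^7) 2^d$ this gives
\begin{equation*}
    \prob[\big]{\norm{\sigma_{V^{(k)}}}_2 \leq \sqrt{d}} \geq (1+o(1)) \, (15/2^7) \cdot 2^d \p[\big]{2d/(\pi n_k)}^{d/2},
\end{equation*}
which contradicts the assumed upper bound once $k$ is large enough, since $(1+o(1))(15/2^7) > C'$. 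If instead $\min_i m_i^{(k)}$ is bounded along a further subsequence, then after refining once more we may fix an index $i$ on which this minimum is attained, so that $m_i^{(k)} = O(1) = o(\log n_k)$. In that regime, \Cref{prop:orthogonal-generic}(ii) says $\prob[\big]{\norm{\sigma_{V^{(k)}}}_2 \leq \sqrt{d}} \cdot n_k^{d/2} \to \infty$, which again contradicts the polynomial upper bound $C' \cdot 2^d (2d/(\pi n_k))^{d/2} = O(n_k^{-d/2})$.

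The main subtle point is ensuring that the $o(1)$ in \Cref{prop:orthogonal-generic}(i) is uniform across the multiplicity vectors appearing in the subsequence. This follows from the proof of \Cref{prop:orthogonal-generic}(i) itself, where the error in Stirling's formula depends only on $\min_i m_i^{(k)}$; and by construction this minimum tends to infinity in Case~1, making the error uniformly negligible.
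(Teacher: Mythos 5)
Your argument is correct, but it takes a genuinely different route from the paper. The paper's proof is direct and avoids any case analysis: it starts from the exact identity \eqref{eq:identity-ortho}, observes that every $(x_1,\dotsc,x_d) \in \cS_h$ has $\abs{x_i} \leq \sqrt{d}$, and then applies \Cref{prop:lower-bound-prod-binomial} term by term. That proposition is a \emph{uniform} lower bound, valid for every multiplicity vector $(m_1,\dotsc,m_d)$ summing to $n$ with no assumption on the size of the individual $m_i$, so the conclusion $\prob[\big]{\norm{\sigma_V}_2 \leq \sqrt{d}} \geq (1+o(1))\card{\cS_h}(2d/(\pi n))^{d/2}$ follows in one step, and \Cref{prop:optimal-orthogonal-ineq} finishes. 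Your route instead reuses the dichotomy of \Cref{prop:orthogonal-generic} together with a contradiction-and-subsequence argument: pigeonhole on the parity vector, then split according to whether $\min_i m_i^{(k)}$ tends to infinity. This works — in Case 1 the AM--GM step correctly converts the upper bound $m_1\dotsb m_d \leq (n/d)^d$ into a lower bound on the probability, and in Case 2 the conclusion of part (ii) overwhelms the assumed $O(n_k^{-d/2})$ upper bound — but it is longer and forces you to worry about exactly the issues you flag at the end (uniformity of the $o(1)$, and, more pedantically, the fact that part (ii) as proved classifies each index as either $o(\log n)$ or tending to infinity, which may require one more subsequence extraction). What the paper's approach buys is precisely the elimination of all of this: \Cref{prop:lower-bound-prod-binomial} was set up in the preliminaries exactly so that this corollary (and the simplicial lower bound) could be proved without regularity assumptions on the multiplicities. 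What your approach buys is that it only uses the statement of \Cref{prop:orthogonal-generic}, not the auxiliary binomial estimate, at the cost of a compactness argument.
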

\begin{proof}
Our starting point is identity \eqref{eq:identity-ortho} from \Cref{prop:orthogonal-generic}, that states
\begin{align*}
    \prob[\big]{\norm{\sigma_V}_2 \leq \sqrt{d}} = \frac{1}{2^n} \sum_{(x_1,\dotsc,x_d) \in \cS_h} \prod_{1 \leq i \leq d} \binom{m_i}{(m_i + x_i)/2}.
\end{align*}
Note that for every $(x_1, \dotsc, x_d) \in \cS_h$, we have $\abs{x_i} \leq \sqrt{d}$ for every $i$.
Therefore, by \Cref{prop:lower-bound-prod-binomial} with $q = d$, we have
\begin{align*}
    \prob[\big]{\norm{\sigma_V}_2 \leq \sqrt{d}} \geq \sum_{(x_1,\dotsc,x_d) \in \cS_h} \p[\big]{1 + o(1)} \p[\bigg]{\frac{2d}{\pi n}}^{d/2} = \p[\big]{1 + o(1)} \card{\cS_h} \p[\bigg]{\frac{2d}{\pi n}}^{d/2}.
\end{align*}
Applying \Cref{prop:optimal-orthogonal-ineq}, we get
\begin{align*}
    \prob[\big]{\norm{\sigma_V}_2 \leq \sqrt{d}} \geq C' 2^d \p[\bigg]{\frac{2d}{\pi n}}^{d/2},
\end{align*}
for $n$ sufficiently large, and for any $C' < 15/2^7$.
\end{proof}

\begin{remark}
The expressions for $f_0(d)$ and $f_1(d)$ are given in \Cref{app:orthogonal}.
Perhaps surprisingly, we have that $f_0(d) > f_1(d)$ for all $d \neq 2$.
Also maybe unexpectedly, it follows from the proof of \Cref{prop:optimal-orthogonal} that optimal parity vectors $h$ that minimise $f_0(d)$ or $f_1(d)$ are unique, except at dimension $17$, where taking $h$ with $10$ zeros or $6$ zeros leads to the same bound for $f_0(17)$.
\end{remark}

\subsection{Simplicial type}
\label{subsec:simplicial}

We now investigate the asymptotic growth of $\prob[\big]{\norm{\sigma_V}_2 \leq \sqrt{d}}$ in the case where $V$ is of simplicial type.
In particular, our main result in this subsection is the following.

\begin{proposition}
\label{prop:simplicial-lower-bound}
There exists $d_0 \geq 0$ such that for any integer $d \geq d_0$ and $n$ sufficiently large in terms of $d$, if $V \subseteq (\SS^{d-1})^n$ is of simplicial type, then
\begin{equation}
\label{eq:simplicial-lb}
    \prob[\big]{\norm{\sigma_V}_2 \leq \sqrt{d}} \geq 2^{1.01d}\p[\bigg]{\frac{2}{\pi n}}^{d/2} (d+1)^{(d-1)/2}.
\end{equation}
\end{proposition}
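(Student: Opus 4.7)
The plan is to express the condition $\norm{\sigma_V}_2 \leq \sqrt{d}$ as an ellipsoidal constraint on the signed sums attached to each simplex vertex, partition sign configurations into equivalence classes on which this constraint is constant, and combine a per-class probability lower bound from \Cref{prop:lower-bound-perturbed-franel} with a lattice-point count inside the resulting ellipsoid.

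First I would write the simplicial $V$ as $m_j$ copies of the vertex $w_j$ of a regular $d$-simplex centered at the origin (so $m_1 + \dotsb + m_{d+1} = n$), and set $S_j \defined \sum_{i \colon v_i = w_j} \eps_i$ and $T \defined S_1 + \dotsb + S_{d+1}$. Using $\langle w_i, w_j \rangle = -1/d$ for $i \neq j$, a direct computation gives
\begin{equation*}
    \norm{\sigma_V}_2^2 = \frac{d+1}{d}\sum_{j=1}^{d+1} S_j^2 - \frac{T^2}{d},
\end{equation*}
so the event $\norm{\sigma_V}_2 \leq \sqrt{d}$ is equivalent to $(d+1)\sum_j S_j^2 - T^2 \leq d^2$. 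This constraint is invariant under the simultaneous translation $S_j \mapsto S_j + 2k$ for $k \in \ZZ$, hence depends only on the equivalence class of $(S_1, \dotsc, S_{d+1})$ modulo $2\ZZ \cdot \mathbf{1}$.

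For each good equivalence class I would pick a representative $(x_1, \dotsc, x_{d+1})$ with $\abs{x_i} \leq m_i$ and $x_i \equiv m_i \pmod 2$; the total probability contribution of configurations in the class equals $2^{-n}\sum_{k \in \ZZ} \prod_{i=1}^{d+1}\binom{m_i}{(m_i+x_i)/2 + k}$, and \Cref{prop:lower-bound-perturbed-franel} applied with $q = d+1$ shows this is at least $\tfrac{1}{2}(2/(\pi n))^{d/2}(d+1)^{(d-1)/2}$. Thus it suffices to exhibit at least $2 \cdot 2^{1.01 d}$ good classes. Fixing the gauge $x_1 \in \{0,1\}$ (the residue of $m_1$ modulo $2$), the good classes correspond to points $(x_2, \dotsc, x_{d+1})$ in a coset of $(2\ZZ)^d \subseteq \ZZ^d$ lying in the ellipsoid $\{y \in \RR^d \colon y^{\top} B y \leq d^2\}$, where $B \defined (d+1) I_d - \mathbf{1}\mathbf{1}^{\top}$. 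The eigenvalues of $B$ are $(1, d+1, \dotsc, d+1)$, so $\det B = (d+1)^{d-1}$ and the ellipsoid has volume $V_d \, d^d / (d+1)^{(d-1)/2}$ with $V_d = \pi^{d/2}/\Gamma(d/2 + 1)$. A standard lattice-point count then yields
\begin{equation*}
    N \geq \p[\big]{1 - o(1)} \frac{V_d \, d^d}{2^d \, (d+1)^{(d-1)/2}} \sim \frac{(\pi e/2)^{d/2}}{\sqrt{\pi e}}
\end{equation*}
by Stirling. Since $\pi e/2 > 2^{2.02}$, the right-hand side exceeds $2 \cdot 2^{1.01 d}$ for all sufficiently large $d$ (explicitly, $d_0$ around $70$ suffices).

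The main technical obstacle will be making the lattice-point count rigorous: the ellipsoid is very elongated, with one semi-axis equal to $d$ along $\mathbf{1}$ while the other $d-1$ semi-axes are only $d/\sqrt{d+1}$, so one has to verify carefully that boundary effects are negligible compared to the bulk volume. Secondarily, one must ensure that every counted lattice point admits a representative with $m_i \geq \abs{x_i}$ so that \Cref{prop:lower-bound-perturbed-franel} actually applies: when the $m_j$ are roughly balanced (i.e.\ $m_j \asymp n/(d+1)$) and $n$ is polynomially large in $d$ this is automatic, and for severely unbalanced configurations the probability is strictly larger (essentially because the problem reduces to a lower-dimensional one with more favourable constants), making the target bound easier to verify.
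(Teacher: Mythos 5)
Your reduction coincides with the paper's up to the final counting step: the same identity $\norm{\sigma_V}_2^2 = \tfrac1d\sum_{i<j}(S_i-S_j)^2 = \tfrac1d\bigl((d+1)\sum_j S_j^2 - T^2\bigr)$, the same quotient by simultaneous even translations with a gauge-fixed representative, and the same per-class lower bound of $\tfrac12(2/(\pi n))^{d/2}(d+1)^{(d-1)/2}$ from \Cref{prop:lower-bound-perturbed-franel} with $q=d+1$. The divergence is in how the $\geq 2^{1.01d+1}$ classes are produced, and that is where your argument has a genuine gap. You invoke "a standard lattice-point count" to assert that the number of points of a coset of $(2\ZZ)^d$ in the ellipsoid $\set{y : y^{\top}By \leq d^2}$ is at least $(1-o(1))\,\mathrm{vol}/2^d$. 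No standard result gives this here: the ellipsoid has $d-1$ semi-axes of length $d/\sqrt{d+1}\approx\sqrt{d}$, while the fundamental cell of $(2\ZZ)^d$ has diameter exactly $2\sqrt{d}$, so the body is \emph{not} large compared to the lattice in the relevant sense. The inner-parallel-body argument produces nothing (shrinking by $[-1,1]^d$, whose circumradius is $\sqrt{d}$, can annihilate the ellipsoid), and error terms in Davenport/van der Corput-type counts are of the same order as the main term. Indeed, in this regime lattice counts genuinely deviate from volumes by exponential factors in $d$: the paper's own Appendix B computation shows that $(2\ZZ)^t$ meets the ball of radius $\sqrt{t}$ in about $2^{(1/4+H(1/4))t}\approx 2^{1.061t}$ points versus a volume prediction of $(\pi e/2)^{t/2}\approx 2^{1.047t}$. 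Here the discrepancy happens to favour you, but that is precisely what must be proven, and since your entire margin is the gap between $2^{1.047d}$ and $2^{1.01d}$, the volume heuristic cannot simply be cited.

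The paper sidesteps this by exhibiting an explicit subfamily $\cA$ of lattice points: coordinates take values only in $\set{-2,\dotsc,3}$ with prescribed proportions $(0.02,0.3,0.68,0.68,0.3,0.02)$ split by parity class, a direct computation verifies the quadratic constraint (with room to spare, $\leq 0.99d^2$), and a multinomial/entropy count gives $\card{\cA} > 2^{1.011d}$. If you want to rescue your route, you would need an honest lower bound on the theta-type sum $\sum_{y \in 2\ZZ^d + h}\mathbf{1}\set{y^{\top}By\leq d^2}$, which in practice reduces to exactly such a combinatorial count. Your secondary concern (ensuring $\abs{x_i}\leq m_i$ and handling unbalanced multiplicities) is real but minor: the paper handles one small $m_i$ by a cyclic relabelling of the representative and two or more small $m_i$ via the $n^{1/8}$ gain in the "moreover" clause of \Cref{prop:lower-bound-perturbed-franel}, which swamps $2^{1.01d}$ once $n$ is large in terms of $d$; your sketch of this part is consistent with that but would need the same care.
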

\begin{proof}
Recall that a collection $V$ of vectors $v_1,\dotsc,v_n$ is of simplicial type if for every $i$ there is $j$ such that $v_i = u_j$, where $u_1, \dotsc, u_{d+1}$ are the vertices of a $d$-simplex in $\RR^d$ inscribed in the unit sphere.
The key property of these vectors that we use is that if $1 \leq i \neq j \leq d+1$ then $\inner{u_i}{u_j} = -1/d$.
Let $(m_1, \dotsc, m_{d+1})$ be the multiplicities of the vectors $u_i$ in $V$, and let $(h_1, \dotsc, h_{d+1}) \in \set{0,1}^{d+1}$ be the parity vector of the $m_i$.

Let $\sigma_1, \dotsc, \sigma_{d+1}$ denote the sum of the signs corresponding to $u_1, \dotsc, u_{d+1}$ respectively.
Thus $\sigma = \sigma_1 u_1 + \dotsb + \sigma_{d+1} u_{d+1}$, where the $\sigma_i$ are independent.
Note that
\begin{align}
\label{eq:simplicial-norm-expansion}
    \norm{\sigma}_2^2 = \inner{\sigma}{\sigma} = \sum_{i,j \in [d+1]} \sigma_i \sigma_j \inner{u_i}{u_j}  &= \sum_{i \in [d+1]}\sigma_i^2 - \frac{2}{d}\sum_{1 \leq i \neq j \leq d+1} \sigma_i \sigma_j \nonumber \\
    &= \frac{1}{d} \sum_{1 \leq i < j \leq d+1} (\sigma_i - \sigma_j)^2.
\end{align}
Thus $\norm{\sigma}_2^2 \leq d$ if and only if $\sum_{1 \leq i < j \leq d+1} (\sigma_i - \sigma_j)^2 \leq d^2$.
We therefore define the following set of solutions to the resulting quadratic inequality.
\begin{equation*}
    \cQ_h \defined \set[\Big]{(x_1,\dotsb,x_{d+1}) \in \ZZ^{d+1} \st \sum_{\mathclap{1 \leq i < j \leq d+1}} (x_i - x_j)^2 \leq d^2 \text{ and } x_i \equiv h_i \mod{2} \text{ for all $i$}}.
\end{equation*}
Consider the equivalence relation $\sim$ on $\ZZ^{d+1}$, defined by setting $(x_1, \dotsc, x_{d+1}) \sim (y_1, \dotsc, y_{d+1})$ if and only if $x_i - y_i = x_j - y_j$ for all $1 \leq i,j \leq d+1$.
Each element $(x_1, \dotsc, x_{d+1}) \in \cQ_h$ has one representative on $\cQ_h/\sim$ with $x_{d+1} = h_{d+1}$.
We may then define the set of such representatives
\begin{equation*}
    \cQ_h^\ast = \set[\big]{(x_1,\dotsc,x_{d+1}) \in \cQ_h \st x_{d+1} = h_{d+1}}.
\end{equation*}

From the above we obtain that
\begin{align}
\label{eq:simplex-prob-1}
    \prob[\big]{\norm{\sigma_V}_2 \leq \sqrt{d}}
    &= \sum_{(x_1,\dotsc,x_{d+1}) \in \cQ_h} \prob[\big]{\sigma_1 = x_1, \dotsc, \sigma_{d+1} = x_{d+1}} \nonumber \\
    &= \sum_{(x_1,\dotsc,x_{d+1}) \in \cQ_h^\ast} \sum_{k \in \ZZ} \, \prob[\big]{\sigma_1 = x_1 + k, \dotsc, \sigma_{d+1} = x_{d+1} + k} \nonumber \\
    &= \sum_{(x_1,\dotsc,x_{d+1}) \in \cQ_h^\ast} \sum_{k \in \ZZ} \, \prod_{i = 1}^{d+1}\frac{1}{2^{m_i}} \binom{m_i}{(m_i + x_i)/2 + k}.
\end{align}
We will now apply \Cref{prop:lower-bound-perturbed-franel} (with $q = d + 1$) to provide a lower bound on \eqref{eq:simplex-prob-1}.
To do so, we need to further restrict $\cQ_h^\ast$ to the set
\begin{equation*}
    \cQ_h^{\ast\ast} = \set[\big]{(x_1,\dotsc,x_{d+1}) \in \cQ_h^\ast \st \abs{x_i} \leq m_i \text{ for all $i$}}.
\end{equation*}
Therefore, we get
\begin{align}
\label{eq:lower-bound-with-Qh}
    \prob[\big]{\norm{\sigma_V}_2 \leq \sqrt{d}}
    &\geq \sum_{(x_1,\dotsc,x_{d+1}) \in \cQ_h^{\ast\ast}} \sum_{k \in \ZZ} \, \prod_{i = 1}^{d+1}\frac{1}{2^{m_i}} \binom{m_i}{(m_i + x_i)/2 + k} \nonumber\\
    &\geq \card{\cQ_h^{\ast \ast}} \, \frac{1}{2} \p[\bigg]{\frac{2}{\pi n}}^{d/2} (d+1)^{(d-1)/2} g(n),
\end{align}
where $g(n) = 1$ if $m_i \leq 2$ for at most one index $i$, and $g(n) = n^{1/8}$ otherwise.
If $g(n) = n^{1/8}$, then we are done, as $n$ is large relative to $d$, and, in particular, $n^{1/8} > 2^{1.01d}$.
Therefore we may assume in the rest of the proof that $g(n) = 1$, and that we have $m_i \leq 2$ for at most one index $i$.
Thus, to finish the proof, it suffices to show that $\card{\cQ_h^{\ast\ast}} \geq 2^{1.01d+1}$.

From  the parity vector $(h_1,\dotsc,h_{d+1}) \in \set{0,1}^{d+1}$, define numbers $x,y \in [0,1]$ such that
\begin{align*}
    \abs[\big]{\set{i \in [d] \st h_i \equiv 0 \mod 2}} &= xd,\text{ and}\\
    \abs[\big]{\set{i \in [d] \st h_i \equiv 1 \mod 2}} &= yd.
\end{align*}
Note in particular that $x+y=1$, and that we are not considering $h_{d+1}$, as $x_{d+1}$ is fixed in the definition of $\cQ_h^\ast$.
Define, for $j \in \ZZ$, the number
\begin{equation*}
    n_j \defined \abs[\big]{\set{i \in [d] \st x_i = j}}.
\end{equation*}

We define $\cA$ to be the set of vectors $(x_1, \dotsc, x_d, x_{d+1})$ such that $x_{d+1} = h_{d+1}$ and
\begin{equation}
\label{eq:atom-direction-counts}
    (n_{-2},n_{-1},n_0,n_1,n_2,n_3) = \p[\big]{0.02xd, 0.3yd, 0.68xd, 0.68yd, 0.3xd, 0.02yd}.
\end{equation}
Note in particular that this implies that, for all $i \in [d+1]$, $-2 \leq x_i \leq 3$.
We will moreover assume that each $n_i$ is an integer and at least 1 for $-2 \leq i \leq 3$.
The error terms resulting from the required rounding are insignificant, and so to maintain clarity of presentation we will make no further comment on them.
We prove two claims about the set $\cA$.

\begin{claim}
\label{cl:atoms-in-q}
We have $\cA \subseteq \cQ_h^\ast$.
\end{claim}
\begin{proof}
It suffices to prove that any point $(x_1, \dotsc, x_{d+1})$ satisfying \eqref{eq:atom-direction-counts} is in $\cQ_h^\ast$.
Assume first that $x$ and $y$ are both non-zero.
First, we have by definition of $\cA$ that $x_i \equiv h_i \mod{2}$ for all $i$.
Therefore, it suffices to show that
\begin{equation}
\label{eq:weight}
    \sum_{-2\leq i < j\leq 3} (j - i)^2 n_i n_j \leq d^2.
\end{equation}

We now define constants $I$ and $C$, corresponding to `internal' and `cross' terms in the sum in \eqref{eq:weight}, as follows.

\begin{equation*}
    I = \hspace{-1.3em} \sum_{\substack{-2\leq i < j\leq 3 \\ i\equiv j \equiv 0 \mod 2}} \hspace{-1.3em} \frac{(j - i)^2 n_i n_j}{x^2 d^2}
    = \hspace{-1.3em} \sum_{\substack{-2\leq i < j\leq 3 \\ i\equiv j \equiv 1 \mod 2}} \hspace{-1.3em} \frac{(j - i)^2 n_i n_j}{y^2 d^2}
    \quad \text{and}\quad
    C = \hspace{-1em} \sum_{\substack{-2\leq i < j\leq 3 \\ i\not\equiv j \mod 2}} \hspace{-1em} \frac{(j - i)^2 n_i n_j}{x y d^2}.
\end{equation*}
An elementary computation shows that $I=0.9664$ and $C= 1.9472 = 2I + \eps$ for some $\eps \in (0,0.015)$.
Therefore, we have
\begin{align}
\label{eq:atom-distance-expansion}
    \sum_{-2\leq i < j\leq 3} (j - i)^2 n_i n_j &= \p[\big]{I(x^2+y^2) + Cxy}d^2 \nonumber \\
    &= \p[\big]{I(x+y)^2 +\eps x y}d^2.
\end{align}
As $x + y = 1$, note that $(I(x+y)^2 +\eps x y)d^2$ is maximised when $x = y = 0.5$.
Using that $\eps < 0.015$, we obtain
\begin{align*}
    \sum_{-2\leq i < j\leq 3} (j - i)^2 n_i n_j \leq 0.99d^2 < d^2,
\end{align*}
as desired.
Note that, in the case where $x = 0$ or $y = 0$, the equality \eqref{eq:atom-distance-expansion} still holds, and the conclusion still follows.
\end{proof}

\begin{claim}
\label{cl:lots-of-atoms}
If $m_i\geq 3$ for all but at most one $m_i$, then for sufficiently large $d$, we have $\card{\cA \cap \cQ_h^{\ast\ast}} > 2^{1.011d}$.
\end{claim}
\begin{proof}
Given that $\cA \subseteq \cQ_h^\ast$, we have that $\cA \cap \cQ_h^{\ast\ast}$ consists of vectors $(x_1,
\dotsc, x_{d+1})$ such that $\abs{x_i} \leq m_i$ for all $i$.
It follows from the definition of $\cA$ in \eqref{eq:atom-direction-counts} that
\begin{align*}
    \card{\cA \cap \cQ_h^{\ast\ast}} \geq \binom{xd}{0.02xd,0.68xd,0.3xd}\binom{yd}{0.02yd,0.68yd,0.3yd}d^{-1}.
\end{align*}
Indeed, note that it is possible that $m_i \leq 1$ for some (unique) $i$, in which case we may have no freedom in choosing $x_i$.
However, if, say $m_1\leq 1$, then we may ignore this fact at first, and then apply a cyclic permutation to our choice so that $\abs{x_1}\leq 1$; hence the factor of $d^{-1}$.
Applying Stirling's formula, as given in \Cref{fact:stirling}, we have for some absolute constant $C>0$ that
\begin{equation*}
    \binom{m}{0.02 m, 0.68 m, 0.3 m} \geq C \cdot{\frac{2^{m H_3 (\alpha, \beta, \gamma)}}{m}},
\end{equation*}
where $H_3$ is the ternary entropy function, defined as
\begin{equation*}
    H_3(p,q,r) \defined - p \log p - q \log q - r \log r.
\end{equation*}
The numerical inequality $H_3(0.02,0.68,0.3) = 1.01231\dotsc > 1.012$ can be easily verified, we have
\begin{align*}
     \card{\cA \cap \cQ_h^{\ast\ast}} \geq \frac{C 2^{xd H_3(0.02,0.68,0.3)}}{xd} \cdot \frac{C 2^{yd H_3(0.02,0.68,0.3)}}{yd} \cdot \frac{1}{d}
     \geq C^2 \cdot {\frac{2^{ 1.012 d }}{xyd^3}} \geq \frac{C^2}{4} \cdot {\frac{2^{ 1.012 d }}{d^3}},
\end{align*}
which is at least $2^{1.011d}$ when $d$ is sufficiently large, as required.
\end{proof}

\Cref{prop:simplicial-lower-bound} now follows immediately from combining \Cref{cl:atoms-in-q} and \Cref{cl:lots-of-atoms}.
\end{proof}

With \Cref{cor:best-orthogonal} and \Cref{prop:simplicial-lower-bound} in hand, we may now deduce \Cref{thm:OrthoBetterThanSimplex}.

\begin{proof}[Proof of \Cref{thm:OrthoBetterThanSimplex}]
Let $Y$ be the choice of vectors given by \Cref{cor:best-orthogonal}. Then we have
\begin{align}
\label{eq:IneqSigmaY-comparison}
    \prob[\big]{\norm{\sigma_Y}_2 \leq \sqrt{d}} \leq C 2^d \p[\bigg]{\frac{2d}{\pi n}}^{d/2}.
\end{align}
By \Cref{prop:simplicial-lower-bound}, we have
\begin{align}
\label{eq:IneqSigmaX-comparison}
    \prob[\big]{\norm{\sigma_X}_2 \leq \sqrt{d}} \geq 2^{1.01d}\p[\bigg]{\frac{2}{\pi n}}^{d/2} (d+1)^{(d-1)/2}.
\end{align}
Let $\eps_d = 2^{-0.005d}$, and note that for $d$ sufficiently large, we have
\begin{align*}
    \eps_d 2^{1.01d}\p[\bigg]{\frac{2}{\pi n}}^{d/2} (d+1)^{(d-1)/2} \geq C 2^d \p[\bigg]{\frac{2d}{\pi n}}^{d/2}.
\end{align*}
Therefore, \eqref{eq:IneqSigmaY-comparison} and \eqref{eq:IneqSigmaX-comparison} imply together that
\begin{align*}
    \prob[\big]{\norm{\sigma_Y}_2 \leq \sqrt{d}} \leq \eps_d \prob[\big]{\norm{\sigma_X}_2 \leq \sqrt{d}},
\end{align*}
as wanted.
\end{proof}

\subsection{Mixed type}
\label{subsec:mixed}

We now prove \Cref{thm:MixedIsBetter}, which we may recall states that orthogonal constructions are never optimal.
Indeed, for $d=2$ this follows from \Cref{thm:BetterConstant}.
For $d=3$, one may follow a method similar to that used to prove \Cref{thm:BetterConstant} to show that the simplicial construction outperforms the orthogonal construction in three dimensions as well.
However, we show in the next proof that a hybrid construction, i.e. combining simplicial and orthogonal components, performs better than a pure orthogonal construction in $d\geq 3$ dimensions.
We emphasise that no effort has been made to find an optimal construction and the purpose of this section is merely to demonstrate that more complex constructions can outperform both orthogonal and simplicial constructions.

\begin{proof}[Proof of \Cref{thm:MixedIsBetter}]
As $7/2^6 < 15/2^7$, we have by \Cref{cor:lower-bound-ortho} for $n$ sufficiently large that
\begin{align}
\label{eq:formula-orthogo-mixedproof}
    \prob[\big]{\norm{\sigma_Y}_2 \leq \sqrt{d}} \geq \frac{7}{2^6} 2^d \p[\bigg]{\frac{2d}{\pi n}}^{d/2}.
\end{align}
We now describe the set of vectors $Z$ we consider.
Let $e_1, \dotsc, e_d$ be an orthonormal basis of $\RR^d$.
Take $a_1$, $a_2$, and $a_3$ copies of each of $w_1$, $w_2$, and $w_3$ respectively, which are the elements of the regular 2-simplex centred at the origin in span of $e_1$ and $e_2$, and $b_i$ copies of $e_i$ for $3 \leq i \leq d$.
We choose $a_1,a_2,a_3,b_3, \dotsc, b_d$ such that $a_1 \equiv a_2 \equiv a_3 \mod 2$, $b_3 \equiv 0 \mod 2$, and $b_4 \equiv \dotsb \equiv b_d \equiv 1 \mod 2$ and moreover, $a_1$, $a_2$ and $a_3$ are $2n/3d + O(1)$, while all $b_i$ are $n/d + O(1)$.
Note that, due to the freedom in whether the $a_i$ are even or odd, this construction is valid d $n$ is even or odd.

Let $\RR^d = W \oplus T \oplus U$, where $W$ is spanned by $e_1$ and $e_2$, $T$ is spanned by $e_3$, and $U$ is spanned by $e_4, \dotsc, e_d$.
Write $\sigma_W$, $\sigma_T$ and $\sigma_U$ for the orthogonal projections of $\sigma_Z$ into subspaces $W$, $T$ and $U$ respectively.
Notice that $\norm{\sigma_{Z}}_2 \leq \sqrt{d}$ is equivalent to
\begin{equation*}
    \norm{\sigma_W}_2^2 + \norm{\sigma_T}_2^2 + \norm{\sigma_U}_2^2 \leq d,
\end{equation*}
Since $b_i$ is odd for all $4\leq i \leq d$, we have $\norm{\sigma_U}_2^2 \geq d - 3$.
Moreover, as $b_3$ is even, we have either $\norm{\sigma_T}_2^2 = 0$ or $\norm{\sigma_T}_2^2 \geq 4$.
Finally, we may consider the lattice generated by $w_1$, $w_2$, and $w_3$ to see that either $\norm{\sigma_W}_2^2 = 0$ or $\norm{\sigma_W}_2^2 \geq 4$, similarly as in the proof of \Cref{cl:sigmaVSmallis0}.

Putting the above points together, we find that $\norm{\sigma_{Z}}_2 \leq \sqrt{d}$ is equivalent to
\begin{equation*}
    \norm{\sigma_W}_2^2 = 0, \quad \norm{\sigma_T}_2^2 = 0, \quad\text{ and }\quad \norm{\sigma_U}_2^2 = d - 3.
\end{equation*}
This allows us to apply \Cref{thm:BetterConstant} and \Cref{prop:orthogonal-generic} to find the following.
\begin{align}
\label{eq:formula-Mixed}
    \prob[\big]{\norm{\sigma_{Z}}_2 \leq \sqrt{d}} &= \p[\big]{1 + o(1)} \p[\Big]{\frac{2\sqrt{3}}{(2n/d)\pi}} 2^{d-3}\p[\Big]{\frac{2}{\pi(n/d)}}^{(d-2)/2} \nonumber \\
    &= \p[\big]{1 + o(1)}\frac{\sqrt{3}}{16}\p[\Big]{\frac{2 d}{\pi n}}^{d/2} 2^d.
\end{align}
In view of \eqref{eq:formula-orthogo-mixedproof} and \eqref{eq:formula-Mixed}, it suffices to show that
\begin{equation*}
    \p[\big]{1 + o(1)}\frac{\sqrt{3}}{16}\p[\Big]{\frac{2 d}{\pi n}}^{d/2} 2^d < \p[\big]{1 + o(1)} \delta \cdot 7\cdot 2^{d-6} \p[\Big]{\frac{2d}{\pi n}}^{d/2}.
\end{equation*}
As this is indeed true for any $\delta$ satisfying $1 > \delta > 4\sqrt{3}/7$, we are done.
\end{proof}

\section{Discussion and open problems}
\label{sec:conclusion}

In our work, we have shown that \Cref{conj:erdos} of Erdős continues to give rise to a rich array of intriguing phenomena that remain poorly understood.

Recall that for a set of vectors $V = \set{v_1, \dotsb, v_n} \subseteq \RR^d$, we denote by $\sigma_V$ the random variable $\sigma_V \defined \eps_1 v_1 + \dotsb + \eps_n v_n$ where $\eps_1, \dotsc, \eps_n$ are independent Rademacher random variables.
Further, it is convenient to denote
\begin{equation*}
    F_{d,r}(n) \defined \inf_{V \in (\SS^{d-1})^n} \prob[\big]{\norm{\sigma_V}_2 \leq r}.
\end{equation*}
We now present several natural questions that arise from our work.

\begin{question}
\label{qu:critical2}
When $n$ is restricted to be odd, roughly how fast does
\begin{equation*}
    \inf_{\set{v_1, \dotsc, v_n} \subseteq \SS^1} \prob[\big]{ \norm{\eps_1 v_1 + \cdots + \eps_n v_n}_2 \leq 1 }
\end{equation*}
decay with $n$? Does it decay polynomially in $n$, exponentially in $n$, or in another way?
\end{question}

In other words, \Cref{qu:critical2} asks for the behaviour of $F_{2,1}(n)$ as $n$ goes to infinity while being odd.
Our result in \Cref{thm:lower-bound} establishes a lower bound of $\Omega(0.525^n)$.
As previously mentioned, Gregory Sorkin~\cite{Sorkin25} recently resolved this question by showing that $F_{2,1}(n)$ for $n$ odd indeed exhibits exponential decay, with an upper bound of $O(\sqrt{2}^{n})$.
While our approach in \Cref{thm:lower-bound} leaves room for improvement, as noted in \Cref{rmk:elliptic}, we cannot match the upper bound obtained by Sorkin at this moment.
The precise asymptotics of $F_{2,1}(n)$ remain elusive for $n$ odd, and we leave it an open problem to determine whether
\begin{equation*}
    \lim_{\substack{n\to \infty\\n \not\equiv 0 \mod{2}}} \p[\big]{F_{2,1}(n)}^{1/n}
\end{equation*}
exists, and if so, what its exact value is.

Although the original \Cref{conj:erdos} of Erdős is false when $n$ is odd, we have shown in \Cref{thm:approximate} that an approximate version holds.
Equivalently, \Cref{thm:approximate} shows that, for each $\delta > 0$, the quantity
\begin{equation}
\label{eq:Defcdelta}
    c_\delta \defined \liminf_{\substack{n\to \infty\\n \not\equiv 0 \mod{2}}} n \, F_{2,1+\delta}(n)
\end{equation}
is strictly positive.
However, from the proof of \Cref{thm:approximate}, it is clear that the lower bound on $c_\delta$ we obtain depends very poorly on $\delta$.
More precisely, we obtain $c_\delta = \Omega(\delta^2 e^{-1/\delta^2})$ as $\delta$ approaches $0$, where we have made explicit the lower bound \eqref{eq:hjns} in \Cref{prop:hjns} by carefully tracking the dependencies in~\cite{He2024-cp}.
A natural open question is to understand how $c_\delta$ varies as $\delta > 0$ approaches $0$.
In other words, to determine the behaviour of $F_{2,1+\delta}(n)/n$ for $n$ odd and large, as a function of $\delta > 0$.

Still in the two-dimensional case, we now consider the case where $r = \sqrt{2}$ and $n$ may be even or odd.
As we have seen in the introduction, several conjectures were made under the suspicion that $\prob[\big]{ \norm{\sigma_V}_2 \leq \sqrt{2}}$ is minimised when the vectors $V = \set{v_1, \dotsc, v_n}$ are selected from an orthogonal basis.
However, with \Cref{thm:BetterConstant}, we have shown that a simplicial configuration achieves a lower probability than the orthogonal arrangement.
Despite this improvement, it remains unclear whether this new construction is optimal.

\begin{question}
\label{qu:mercedes}
Which choice of unit vectors $v_1, \dotsc, v_n \in \RR^2$ minimise
\begin{equation*}
    \prob[\big]{ \norm{\eps_1 v_1 + \cdots + \eps_n v_n}_2 \leq \sqrt{2}} \, ?
\end{equation*}
\end{question}

We see no strong evidence suggesting that a simplicial configuration is optimal; in fact, \Cref{thm:OrthoBetterThanSimplex} may even serve as evidence to the contrary.
Identifying good conjectural constructions or even simply gathering evidence in favour or against a certain configuration in two dimension is of great interest.
We note that the answer to \Cref{qu:mercedes} may depend of the parity of $n$.

\Cref{qu:mercedes} can be viewed as a particular case of \Cref{qu:HJNSFunctionf} by He, Ju\v{s}kevi\v{c}ius, Narayanan, and Spiro, where they ask for the behaviour of
\begin{equation*}
    \liminf_{n \to \infty} n \, F_{2,r}(n)
\end{equation*}
as a function of $r$.
A natural extension to higher dimensions is to consider the quantity $n^{d/2} F_{d,r}(n)$.
Furthermore, in view of the apparent importance of the parity, we propose the following refined general problem.

\begin{problem}
\label{eq:fullproblem}
For all $d \geq 2$ and $r \geq 0$, determine the values of
\begin{align*}
    f_{d,r}^{0} \defined \;\;\liminf_{\mathclap{\substack{n\to \infty\\n \equiv d \mod{2}}}}\; n^{d/2} F_{d,r}(n), \quad \text{and } \quad f_{d,r}^{1} \defined \;\;\liminf_{\mathclap{\substack{n\to \infty\\n \not\equiv d \mod{2}}}}\;  n^{d/2} F_{d,r}(n).
\end{align*}
\end{problem}

We believe that obtaining a full description of $f_{d,r}^{0}$ and $f_{d,r}^{1}$ is an incredibly challenging and ambitious endeavour.
Nonetheless, even partial progress in specific cases would be of great interest, and we highlight several such instances where further investigation would be particularly desirable.
It is also convenient to consider the quantity
\begin{equation*}
    f_{d,r} \defined \min \set{f_{d,r}^0, f_{d,r}^1}.
\end{equation*}
Note that $c_\delta$ in \eqref{eq:Defcdelta} is simply $f_{2,1+\delta}^1$ and that \Cref{qu:mercedes} asks what are the vector configurations that attain $f_{2,\sqrt{2}}$.
Since $f_{d,r} = 0$ for all $r < \sqrt{d}$ and $f_{d,\sqrt{d}} > 0$, we believe that the following question is quite natural.

\begin{question}
\label{qu:constant_d}
How does $f_{d,\sqrt{d}}$ varies with $d$?
In other words, what is the maximum constant $C_d$ for which we have
\begin{equation*}
    \prob[\big]{ \norm{\eps_1 v_1 + \cdots + \eps_n v_n}_2 \leq \sqrt{d}} \geq \frac{\p[\big]{C_d - o(1)}}{n^{d/2}}
\end{equation*}
for every choice of unit vectors $v_1, \dotsc, v_n \in \RR^d$, as $n$ grows?
\end{question}

Beck~\cite{Beck1983-ef} has shown in his proof of \Cref{thm:beck} a double exponential lower bound on $f_{d,\sqrt{d}}$ and noted that it would be worthwhile to improve on this estimate.

In higher dimensions, another key problem is to determine whether a double-jump phase transition occurs or not.
Recall that $r_c^{\ast}(d)$ be defined as in \eqref{eq:rcrit-def} to be the infimum of the reals $r > 0$ satisfying
\begin{equation*}
    \liminf_{\substack{n\to \infty\\n \not\equiv d \mod{2}}} F_{d,r}(n) > 0.
\end{equation*}

\begin{question}
\label{qu:double-jump}
For which $d \geq 3$ it is the case that
\begin{equation*}
    \inf_{\set{v_1, \dotsc, v_n} \subseteq \SS^{d-1}} \prob[\big]{ \norm{\eps_1 v_1 + \cdots + \eps_n v_n}_2 \leq r_c^{\ast}(d)}
\end{equation*}
is simultaneously positive and $o(n^{d/2})$ as $n \neq d \mod{2}$ goes to infinity?
\end{question}

In essence, \Cref{qu:double-jump} asks for the weakest statement that demonstrates that the behaviour at the radius $r_c^{\ast}(d)$ is more complex than a single jump from $0$ to $\Theta(n^{d/2})$.
Indeed, a positive answer to \Cref{qu:double-jump} for some $d$ is necessary for a double-jump to occur at $r_c^{\ast}(d)$, but different behaviour is in principle possible and it would be rather interesting if it exists.
If a double-jump indeed occurs, determining the precise behaviour at $r_c^{\ast}(d)$ is the next natural question.
In particular, it would be interesting to determine whether an exponential separation at the double-jump also occurs in higher dimensions, as in the two-dimensional case.

Regardless of whether a double-jump takes place or not, determining the location of $r_c^{\ast}(d)$ is still of independent interest.
Again, it is tempting to believe that $r_c^{\ast}(d) = \sqrt{d - 1}$ following the pattern observed in two dimension.
This would indeed be the case if we have a positive answer to \Cref{qu:vector-balancing}, posed in the introduction and repeated below for emphasis.

\refinedbal*

Nevertheless, \Cref{qu:vector-balancing} remains open and would provide a natural extension of the classical vector balancing results in \cite{Barany1981-mi, Beck1983-ef, Sevast-yanov1980-jf, Spencer1981-qa} from the 1980's.


\section*{Acknowledgements}
\label{sec:ack}

The authors would like to thank Gregory Sorkin for valuable discussions on this problem, as well as providing an improvement on the upper bound from \Cref{thm:radius1}.
The authors are also grateful to Béla Bollobás for his continued support.

The first author is funded by the Internal Graduate Studentship of Trinity College, Cambridge.
The second author is funded by the Department of Pure Mathematics and Mathematical Statistics (DPMMS) of the University of Cambridge.
The third author is partially supported by ERC Starting Grant 101163189 and UKRI Future Leaders Fellowship MR/X023583/1.


\printbibliography


\appendix
\markboth{L. HOLLOM, J. PORTIER, AND V. SOUZA}{DOUBLE-JUMP PHASE TRANSITION FOR THE REVERSE LITTLEWOOD--OFFORD PROBLEM}


\section{Sums of products of binomial coefficients}
\label{app:binom}

In this appendix, we give proofs to \Cref{prop:perturbed-franel-2,,prop:lower-bound-perturbed-franel,,prop:lower-bound-prod-binomial}.

\perturbedfranel*
\begin{proof}[Proof of \Cref{prop:perturbed-franel-2}]
Let $\beta \defined 1/2 + \eps$ and $P \defined m_1 \dotsb m_q$, and define
\begin{align*}
    f(k) &\defined \prod_{i=1}^{q} \binom{m_i}{(m_i+x_i)/2 + k}, \\
    R &\defined 1/m_1 + \dotsb + 1/m_q, \;\text{ and } \\
    G  &\defined 2^{m_1 + \dotsb + m_q} \p[\big]{2/\pi}^{(q-1)/2} \p[\big]{RP}^{-1/2}.
\end{align*}
We are going to split the sum $\sum_{k} f(k)$ into the main contribution, coming from terms with $\abs{k} \leq \ell^\beta$, and an error term, corresponding with terms with $\abs{k} > \ell^\beta$.

We first take care of the main contribution.
For that, we are going to estimate $\binom{m_i}{t}$ when $t = (m_i + x_i)/2 + k$ with $k = O(\ell^{\beta})$.
In this case, we have
\begin{equation*}
    t = m_i/2 + O(k) = (1 + O(k/m_i)) m_i/2,
\end{equation*}
and we may note that the above approximation holds for $m_i - t$ as well.
Thus
\begin{equation*}
\sqrt{\frac{m_i}{2\pi t(m_i-t)}} = \sqrt{\frac{m_i}{2 \pi (m_i/2)^2 \p[\big]{1 + O(k/m_i)} }} = \p[\big]{1 + O(k/m_i)} \sqrt{\frac{2}{\pi m_i}}
\end{equation*}
Observe that the entropy function satisfies $H(1/2 + \eps) = 1 - 2 \eps^2 / \log 2 + O(\eps^3)$, and so
\begin{align*}
    H\p[big]{t/m_i} = H\p[\Big]{\frac{1}{2} + \frac{2k + x_i}{2m_i}} &= 1 - \frac{2}{\log 2}\p[\Big]{\frac{2k+x_i}{2m_i}}^2 + O\p[\big]{((k +x_i)/m_i)^3} \\
    &= 1 - \frac{2 k^2}{m_i^2 \log 2} + O\p[\big]{kX/m_i^2},
\end{align*}
which in particular gives
\begin{equation*}
    2^{m_i H(t/m_i)} = 2^{m_i} e^{-2k^2/m_i + O(kX/m_i)} = \p[\big]{1 + O(kX/m_i)} 2^{m_i} e^{-2k^2/m_i}.
\end{equation*}
But note that \Cref{fact:stirling} gives
\begin{align*}
     \binom{m_i}{(m_i+x_i)/2 + k} = \p[\big]{1 + O(kX/m_i)}  \sqrt{\frac{2}{\pi m_i}} 2^{m_i} e^{- 2k^2/m_i}.
\end{align*}
Therefore, as long as $k = O(\ell^{\beta})$, we have
\begin{align*}
    f(k) &= \p[\big]{1 + O(kX/\ell)} \prod_{i=1}^{q}  \sqrt{\frac{2}{\pi m_i}} 2^{m_i} e^{- 2k^2/m_i} \\
    &= \p[\big]{1 + O(kX/\ell)} 2^{m_1 + \dotsb + m_q} \p[\big]{2/\pi}^{q/2} \p[\big]{P}^{-1/2} \exp\p[\big]{- 2 R k^2}.
\end{align*}
Summing over all $k$ with $\abs{k} \leq \ell^\beta$, we get
\begin{align*}
    \sum_{\abs{k} \leq \ell^{\beta}} f(k) &= 2^{m_1 \dotsb + m_q} \p[\big]{2/\pi}^{q/2} \p[\big]{P}^{-1/2} \sum_{\abs{k} \leq \ell^{3/4}} \p[\big]{1 + O(kX/\ell)} \exp\p[\big]{- 2 R k^2}.
\end{align*}
Note further that we have
\begin{equation*}
    \sum_{\abs{k} \leq \ell^{\beta}} \p[\big]{1 + O(kX/\ell)} \exp\p[\big]{- 2 R k^2} = \p[\big]{1 + O(X/\ell^{1 - \beta})} \sum_{\mathclap{\abs{k} \leq \ell^{\beta}}} \exp\p[\big]{- 2 R k^2}.
\end{equation*}
Finally, as the function $k \mapsto \exp(-2R k^2)$ can be split into two monotone intervals, and $\exp(-2R k^2) =O(1)$, a simple comparison with the integral gives
\begin{align*}
    \sum_{\abs{k} \leq \ell^{\beta}} \exp\p[\big]{- 2 R k^2} = \int_{-\ell^{\beta}}^{\ell^{\beta}} \exp\p[\big]{- 2 R x^2} \dx + O(1).
\end{align*}
Using the following tail inequality\footnote{This can be obtained by Markov's inequality and the fact that $\expec{e^{-\lambda \cN(0,\sigma^2)}} = e^{\sigma^2 \lambda^2/2}$. Indeed, let $\lambda = t/\sigma^2$ and note that $\prob[\big]{\cN(0,\sigma^2) \leq - t} = \prob[\big]{e^{- \lambda \cN(0,\sigma^2)} \geq e^{\lambda t}} \leq e^{\sigma^2 \lambda^2 / 2} \cdot e^{-\lambda t} = e^{-t^2/2\sigma^2}$. } $\prob[\big]{\cN(0,\sigma^2) \leq - t} \leq e^{-t^2/2\sigma^2}$, we have
\begin{align*}
    \int_{-\ell^{\beta}}^{\ell^{\beta}} \exp\p[\big]{- 2 R x^2} \dx
    &= \sqrt{\frac{\pi}{2 R}} - 2\int_{-\infty}^{-\ell^{\beta}} \exp\p[\big]{- 2 R x^2} \dx \\
    &= \p[\big]{1 + O(e^{-2 R \ell^{2\beta}})} \sqrt{\frac{\pi}{2 R}}.
\end{align*}
As $1/\ell \leq R \leq q/\ell$, we have $R \ell^{2\beta} = \Theta(\ell^{2\eps})$ and since $e^{-x} = O_b(1/x^b)$ for any $b \geq 1$, we have
\begin{align*}
    \sum_{\abs{k} \leq \ell^{\beta}} \exp\p[\big]{- 2 R k^2}
    = \p[\big]{1 + O_\eps(1/\ell)}\sqrt{\frac{\pi}{2 R}}.
\end{align*}
Therefore, the main to the sum of $f(k)$ gives
\begin{align}
\label{eq:Contri-Main-Term}
    \sum_{\abs{k}\leq \ell^{\beta}} f(k)
    &= \p[\big]{1 + O(X/\ell^{1-\beta}) + O_\eps(1/\ell)} 2^{m_1 + \dotsb + m_q} \p[\big]{2/\pi}^{q/2} \p[\big]{P}^{-1/2} \sqrt{\frac{\pi}{2R}} \nonumber \\
    &= \p[\big]{1 + O_\eps(X/\ell^{1/2 - \eps})} G.
\end{align}
We now turn to the contribution of the terms with $k > \ell^{\beta}$.
Note that for every $\card{k} > \ell^{\beta}$, we have
\begin{align*}
    f(k) \leq f(\ell^{\beta}) &= \p[\big]{1 + O(X/\ell^{1 - \beta})} 2^{m_1 + \dotsb + m_q} \p[\big]{2/\pi}^{q/2} \p[\big]{P}^{-1/2} \exp\p[\big]{- 2 R \ell^{2\beta}} \\
    &= \p[\big]{1 + O(X/\ell^{1 - \beta})} G \p[\big]{2R/\pi}^{1/2} \exp\p[\big]{- 2 R \ell^{2\beta}} = O_\eps(G/\ell^{2}).
\end{align*}
Finally, as there are $O(\ell)$ terms in the sum, we have
\begin{equation}
\label{eq:Contri-Error}
    \sum_{\abs{k} > \ell^{\beta}} f(k) = O_\eps\p[\big]{\ell f(\ell^\beta)} = O_\eps(G/\ell).
\end{equation}
Combining \eqref{eq:Contri-Main-Term} and \eqref{eq:Contri-Error} finishes the proof.
\end{proof}

We will use similar ideas employed in the proof above to prove \Cref{prop:lower-bound-perturbed-franel,prop:lower-bound-prod-binomial}.
The following elementary fact will also be useful.

\begin{restatable}{proposition}{allmequal}
\label{prop:all-m-equal}
Let $q$ be a fixed positive integer, and let $y_1, \dotsc, y_{q}$ and $n$ be positive real numbers.
If $y_1 + \dotsb + y_{q} = n$, then we have $\p[\big]{\prod_{i=1}^{q} y_i} \p[\big]{ \sum_{i=1}^{q} y_i^{-1}} \leq n^{q-1}/q^{q-2}$,
and the maximum is attained when all $y_i$ are $n/q$.
\end{restatable}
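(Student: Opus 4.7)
The plan is to recognise the left-hand side as a normalised elementary symmetric polynomial and then invoke Maclaurin's inequality. First I would rewrite
\begin{equation*}
    \p[\Big]{\prod_{i=1}^{q} y_i} \p[\Big]{\sum_{i=1}^{q} y_i^{-1}} = \sum_{i=1}^{q} \prod_{j \neq i} y_j = e_{q-1}(y_1, \dotsc, y_q),
\end{equation*}
which is the $(q-1)$-st elementary symmetric polynomial in the $y_i$.

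Next, I would recall Maclaurin's inequality: for positive reals $y_1, \dotsc, y_q$, the normalised means $p_k \defined e_k(y_1, \dotsc, y_q) / \binom{q}{k}$ satisfy $p_1 \geq p_2^{1/2} \geq \dotsb \geq p_q^{1/q}$, with equality throughout if and only if all the $y_i$ are equal. Specialising to $k=1$ and $k = q-1$, and using that $p_1 = n/q$ and $p_{q-1} = e_{q-1}/q$, this gives $(n/q)^{q-1} \geq e_{q-1}/q$, which rearranges exactly to $e_{q-1} \leq n^{q-1}/q^{q-2}$. Maclaurin's equality condition then shows that the bound is attained precisely at $y_1 = \dotsb = y_q = n/q$, which is also straightforwardly verified by direct substitution since $e_{q-1}(n/q, \dotsc, n/q) = q \cdot (n/q)^{q-1}$.

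If one prefers a self-contained proof avoiding Maclaurin, a smoothing argument is equally short. The feasible set $\set{(y_1,\dotsc,y_q) \st y_i \geq 0, \, \sum y_i = n}$ is compact and $e_{q-1}$ is continuous, so a maximiser exists. Suppose at a maximiser some $y_1 \neq y_2$; then replacing both by $(y_1+y_2)/2$ preserves $y_1 + y_2$ but strictly increases $y_1 y_2$. Using the decomposition
\begin{equation*}
    e_{q-1}(y_1, \dotsc, y_q) = y_1 y_2 \cdot e_{q-3}(y_3, \dotsc, y_q) + (y_1 + y_2) \cdot y_3 \dotsb y_q,
\end{equation*}
the first summand strictly increases while the second is unchanged, contradicting maximality. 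Hence at the maximum all $y_i$ coincide, and the value $n^{q-1}/q^{q-2}$ is obtained by direct computation.

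There is no genuine obstacle in this proof: once the algebraic identity $(\prod y_i)(\sum 1/y_i) = e_{q-1}$ is spotted, the result is a textbook consequence of either Maclaurin's inequality or a one-step smoothing.
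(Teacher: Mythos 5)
Your proof is correct, and your primary route is genuinely different from the paper's. The paper works directly with the expression $\p[\big]{\prod_i y_i}\p[\big]{\sum_i y_i^{-1}}$ and runs a smoothing argument on it: moving mass from a larger $y_j$ to a smaller $y_i$ is shown, after expanding and cancelling, to increase the quantity, and iterating forces all $y_i$ to equal $n/q$. You instead first observe the identity $\p[\big]{\prod_i y_i}\p[\big]{\sum_i y_i^{-1}} = e_{q-1}(y_1,\dotsc,y_q)$ and then quote Maclaurin's inequality $p_1 \geq p_{q-1}^{1/(q-1)}$, which yields the bound and the equality condition in one line; this is shorter and cleaner, at the cost of citing a named inequality rather than being self-contained. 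Your backup smoothing argument is essentially the paper's argument in disguise, but phrased through the decomposition $e_{q-1} = y_1y_2\,e_{q-3}(y_3,\dotsc,y_q) + (y_1+y_2)\,y_3\dotsb y_q$, which isolates the only term that changes and makes the verification more transparent than the paper's expand-and-rearrange. One small caveat applies equally to your smoothing variant and to the paper's: when $q=2$ the ``strictly increases'' claim fails (the coefficient $e_{q-3}$, respectively the paper's $S$, vanishes), but there the quantity is identically $n$ and the statement is trivially an equality; the Maclaurin route sidesteps this degeneracy entirely.
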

\begin{proof}
We will show that, if $y_i < y_j$ and $0 < x < y_j - y_i$, then replacing $y_i$ by $y_i + x$ and $y_j$ by $y_j - x$ increases $\p[\big]{\prod_{i=1}^{q} y_i} \p[\big]{ \sum_{i=1}^{q} y_i^{-1}}$.
With this result in hand, the claim follows by iteratively performing such replacements to make each $y_i$ equal to $n/q$.

If we fix $i$ and $j$ as above, and define $S \defined \sum_{k \notin \set{i,j}} y_k^{-1}$, then it suffices to prove that
\begin{equation*}
    (y_i + x)(y_j - x)(S + (y_i + x)^{-1} + (y_j - x)^{-1}) > y_i y_j (S + y_i^{-1} + y_j^{-1}),
\end{equation*}
which after expanding and rearranging, it is shown to be equivalent to
\begin{equation*}
    (y_j x - y_i x - x^2)S > 0,
\end{equation*}
which follows immediately from the fact that $(y_i + x)(y_j - x) > y_i y_j$, as required.
\end{proof}

We now proceed to the remaining proofs.

\lbperturbedfranel*
\begin{proof}
Throughout this proof, all asymptotic notation will hold for $n\to\infty$.
Without loss of generality, we may assume $m_1 \leq m_2 \leq \dotsb \leq m_q$.
Fix a sufficiently slowly growing function $g(n)$; in fact, $g(n) = \loglog n$ will suffice.
Suppose first that $m_1 \geq g(n)$.
By \Cref{prop:perturbed-franel-2}, we have
\begin{align*}
    \sum_{k \in \ZZ} \, \prod_{i=1}^q \binom{m_i}{(m_i+x_i)/2 + k} &= \p[\big]{1 + o(1)} 2^{n} \sqrt{\frac{\p{2/\pi}^{q-1}}{ \p[\big]{\prod_{i=1}^{q} m_i} \p[\big]{ \sum_{i=1}^{q} m_i^{-1}}}}.
\end{align*}
Applying \Cref{prop:all-m-equal}, it follows that
\begin{align*}
    \sum_{k \in \ZZ} \, \prod_{i=1}^{q} \binom{m_i}{(m_i+x_i)/2 + k} &\geq 2^n \p[\big]{1 + o(1)} \p[\bigg]{\frac{2}{\pi n}}^{(q-1)/2} q^{(q-2)/2} \\
    &\geq 2^{n-1} \p[\bigg]{\frac{2}{\pi n}}^{(q-1)/2} q^{(q-2)/2},
\end{align*}
for $n$ large enough, as desired.

We now turn to the case $m_1 \leq g(n)$.
We split this case into subcases.

\begin{description}[leftmargin=0em]
\item[Case (i)] Suppose that $m_2 \leq g(n)^3$.
Let $t \geq 2$ be such that $m_1, \dotsc, m_t \leq g(n)^3$ and $m_{t+1}, \dotsc, m_{q} > g(n)^3$.
For every $i \leq t$ we have $\binom{m_i}{(m_i+x_i)/2} \geq 1$ since $m_i \geq \abs{x_i}$.
On the other hand, for every $i > t$, we have by \Cref{fact:stirling} that
\begin{equation*}
    \binom{m_i}{(m_i+x_i)/2} \geq \frac{2^{m_i}}{\sqrt{\pi m_i}}.
\end{equation*}
Therefore, we obtain that
\begin{align*}
    \sum_{k \in \ZZ} \, \prod_{i=1}^{q} \binom{m_i}{(m_i+x_i)/2 + k} &\geq \prod_{i=1}^{q} \binom{m_i}{(m_i+x_i)/2 } \geq  \prod_{i=t+1}^{q} \frac{2^{m_i}}{\sqrt{\pi m_i}}  \\
    &\geq 2^{n-q \, g(n)^3} \pi^{-(q-t)/2} \prod_{i=t+1}^{q} m_i ^{-1/2}.
\end{align*}
As the above product is minimised when all $m_i$ are as equal as possible, given that their sum is fixed, and that $q$ is constant, we have that for $n$ large enough
\begin{align*}
    \sum_{k \in \ZZ} \, \prod_{i=1}^{q} \binom{m_i}{(m_i+x_i)/2 + k} &\geq 2^{n - q\,g(n)^3} \pi^{-(q-t)/2} \p[\bigg]{ \frac{n - q\,g(n)^3}{q-t} }^{-(q-t)/2} \\
    &= \Omega_q \p[\big]{2^n n^{-q/2 + 3/4}}
    \geq 2^{n-1} \p[\bigg]{\frac{2}{\pi n}}^{(q-1)/2} q^{(q-2)/2} \, n^{1/8}
\end{align*}
for $n$ sufficiently large, noting that the second line follows from the first as $g(n) < (\log n / (4q))^{1/3}$.
This concludes Case (i).
Note that we have obtained the extra factor of $n^{1/8}$ not only when $m_1,m_2 \leq C$, but under the weaker assumption that $m_1 \leq g(n)$  and $m_2 \leq g(n)^3$.

\item[Case (ii)]
Suppose now that $m_2 \geq g(n)^3$.
We have that
\begin{align*}
    \sum_{k \in \ZZ} \, \prod_{i=1}^{q} \binom{m_i}{(m_i+x_i)/2 + k} &= \sum_{k =-m_1}^{m_1} \, \prod_{i=1}^{q} \binom{m_i}{(m_i+x_i)/2 + k}.
\end{align*}
For every $k \leq \abs{k}$, we have $m_i \geq \abs{k}^3$ for all $i \geq 2$, so by \Cref{fact:stirling} we have
\begin{equation*}
    \binom{m_i}{(m_i + x_i)/2 + k} = \p[\big]{1 + o(1)}\frac{2^{m_i+1/2}}{\sqrt{\pi m_i}},
\end{equation*}
so finally, we obtain
\begin{align*}
    \sum_{k =-m_1}^{m_1} \, \prod_{i=1}^{q} \binom{m_i}{(m_i+x_i)/2 + k} &= 2^n\p[\big]{1 + o(1)} \prod_{i=2}^{q} \p[\Big]{\frac{2}{\pi m_i}}^{1/2} \geq 2^n\p[\big]{1 + o(1)} \p[\Big]{\frac{2(q-1)}{\pi (n-m_1)}}^{(q-1)/2},
\end{align*}
where the final inequality comes from noting that the expression is minimised when all $m_i$ are equal.
Furthermore, note that $(q-1)^{(q-1)} > q^{q-2}$, we see that the above lower bound is greater than the desired bound, finishing the proof in this Case (ii) too. \qedhere
\end{description}
\end{proof}

Finally, we proceed to the last result we need.

\lbprodbinomial*
\begin{proof}
If $q = 1$, the result is classical and is just an application of Stirling's approximation, so we assume $q \geq 2$.
Assume $m_1 \leq \dotsb \leq m_{q}$.
Fix a sufficiently slowly growing function $g(n)$; such as $g(n) = \loglog n$.
Let $t \geq 0$ be such that $m_1, \dotsc, m_t \leq g(n)$ and $m_{t+1}, \dotsc, m_{q} > g(n)$.
For every $i \leq t$ we have $\binom{m_i}{(m_i+x_i)/2} \geq 1$ and for every $i > t$ we have by \Cref{fact:stirling} that
\begin{equation*}
    \binom{m_i}{(m_i+x_i)/2} \geq \p[\big]{1 + o(1)} 2^{m_i}\sqrt{\frac{2}{\pi m_i}}.
\end{equation*}
Therefore
\begin{align*}
    \prod_{i=1}^{q} \binom{m_i}{(m_i+x_i)/2 } &\geq \p[\big]{1 + o(1)} \prod_{i=t+1}^{q} 2^{m_i} \sqrt{\frac{2}{\pi m_i}}  \\
    &\geq \p[\big]{1 + o(1)} 2^{n-t g(n)} \p[\bigg]{ \frac{2}{\pi}}^{(q-t)/2} \prod_{i=t+1}^{q} m_i ^{-1/2} \\
    &\geq \p[\big]{1 + o(1)} 2^{n-t g(n)} \p[\bigg]{\frac{2(q-t)}{\pi n}}^{(q-t)/2},
\end{align*}
so we are done if $t = 0$.
When $t \geq 1$, we have $g(n) < \log n / (4q)$, so we also done since
\begin{equation*}
    \frac{1}{2^n} \prod_{i=1}^{q} \binom{m_i}{(m_i+x_i)/2 } = \Omega\p[\big]{n^{-q/2+1/4}}. \qedhere
\end{equation*}
\end{proof}

\section{Optimal orthogonal constructions}
\label{app:orthogonal}

Recall that $\cS_h$ is the set of tuples $(x_1, \dotsc, x_d) \in \ZZ^d$ such that $x_1^2 + \dotsb + x_d^2 \leq d$ and that $x_i \equiv h_i \mod{2}$ for all $i$.
We have defined
\begin{equation*}
    f_0(d) \defined \;\; \min_{\mathclap{\substack{h \in \set{0,1}^d\\h_1 + \dotsb + h_d \equiv d (2)}}} \quad \card{\cS_h}
    \quad \text{and } \quad
    f_1(d) \defined \;\; \min_{\mathclap{\substack{h \in \set{0,1}^d\\h_1 + \dotsb + h_d \not\equiv d (2)}}} \quad \card{\cS_h}.
\end{equation*}
We now determine the values of $f_0(d)$ and $f_1(d)$ for all $d \geq 1$.

\begin{proposition}
\label{prop:optimal-orthogonal}
The values of $f_0(d)$ and $f_1(d)$ are given by
\begin{align*}
    f_0(d) = \begin{cases}
        2^d & \text{if $d = 1$}, \\
        2^{d-2} & \text{if $2 \leq d \leq 5$}, \\
        13 \cdot 2^{d-6} & \text{if $6 \leq d \leq 9$ or $d \geq 17$}, \\
        (191+d) \cdot 2^{d-10} & \text{if $10 \leq d \leq 17$}.
    \end{cases}
    &&
    f_1(d) = \begin{cases}
        2^{d-1} & \text{if $1 \leq d \leq 2$}, \\
        2^{d-3} & \text{if $3 \leq d \leq 6$}, \\
        15 \cdot 2^{d-7} & \text{if $d \geq 7$}.
    \end{cases}
\end{align*}
\end{proposition}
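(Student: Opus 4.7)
My plan is to first observe that the set $\cS_h$ is invariant under permuting the coordinates, so $\card{\cS_h}$ depends only on the number $k$ of zero entries of $h$; I will write $N(d,k)$ for this common value, and thereby reduce the statement to computing $f_0(d) = \min N(d,k)$ over even $k \in [0,d]$ and $f_1(d) = \min N(d,k)$ over odd $k \in [0,d]$. To get a handle on $N(d,k)$, I would split the coordinates into $k$ ``even-parity'' positions with $x_i\in 2\ZZ$ and $d-k$ ``odd-parity'' positions with $x_i\in 2\ZZ+1$. Letting $k_r$ be the number of even-parity positions with $|x_i|=2r$ and $j_r$ the number of odd-parity positions with $|x_i|=2r+1$, the constraint $\sum x_i^2\leq d$ becomes, using $(2r+1)^2-1=4r(r+1)$, the excess bound $\sum_{r\geq 1} 4r^2 k_r + \sum_{r\geq 1} 4r(r+1)\, j_r \leq k$, which yields
\begin{equation*}
    N(d,k) = 2^{d-k}\sum_{\substack{(k_r),(j_r)\\ \text{feasible}}}\binom{k}{k_0,k_1,\dotsc}\binom{d-k}{j_0,j_1,\dotsc}\,2^{k-k_0}.
\end{equation*}

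For small $k$ only a handful of tuples $(k_r, j_r)$ are admissible, so I would compute $N(d,k)$ in closed form: direct enumeration gives $N(d,k) = 2^{d-k}$ for $0\leq k\leq 3$, $N(d,k) = (1+2k)\,2^{d-k}$ for $4\leq k\leq 7$, and $N(d,k) = (2k^2-k+d+1)\,2^{d-k}$ for $8\leq k\leq 11$; slightly more involved polynomial-in-$k$-and-$d$ expressions cover $12\leq k\leq 15$ and $16\leq k\leq 17$. With these in hand, the values of $f_0(d)$ and $f_1(d)$ for $1\leq d\leq 17$ drop out by direct pairwise comparison of the $N(d,k)$ within each parity class of $k$, and one identifies the candidate minimizers $k=6,10$ (for $f_0$) and $k=7$ (for $f_1$), as well as the crucial tie $N(17,6)=N(17,10)$.

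To handle $d\geq 18$, I need to rule out any $k \geq 18$ being the minimizer. Set $\tilde{N}(d,k) \defined N(d,k)/2^{d-k}$. Adjoining an extra odd-parity coordinate $x_{d+1}=\pm 1$ gives $N(d+1,k) \geq 2N(d,k)$, so $\tilde N(\cdot,k)$ is non-decreasing in $d$ and hence $\tilde N(d,k) \geq \tilde N(k,k)$. When $d=k$ all coordinates are even, and restricting further to $|x_i|\in \set{0,2}$ yields the clean lower bound $\tilde N(k,k) \geq \sum_{i=0}^{\floor{k/4}} \binom{k}{i}\,2^i$. It then suffices to verify that $\sum_{i=0}^{\floor{k/4}}\binom{k}{i}\,2^i \geq 13\cdot 2^{k-6}$ for every even $k \geq 18$ and $\geq 15\cdot 2^{k-7}$ for every odd $k \geq 19$. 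For $18 \leq k \leq 23$ I would do this as short numerical checks (for instance at $k=18$ the sum equals $56137 > 53248 = 13\cdot 2^{12}$), and for $k\geq 24$ already the single central term $\binom{k}{\floor{k/4}}\,2^{\floor{k/4}}$ suffices, since by Stirling it is of order $2^{(H(1/4)+1/4)k - O(\log k)} = 2^{1.061k - O(\log k)}$, which dominates $2^{k - O(1)}$.

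The hard part will be the bookkeeping in the middle range $8 \leq k \leq 17$, where several excess configurations $(k_r,j_r)$ contribute simultaneously and the correct polynomials in $d$ appearing in $N(d,k)$ must be obtained without error; pinning these down is essential for locating the delicate transitions between the regimes of $d$, and in particular for detecting the switch of the $f_0$-minimizer from $k=10$ back to $k=6$ at the critical value $d = 17$, where the two expressions happen to coincide exactly.
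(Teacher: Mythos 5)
Your proposal is correct and follows essentially the same route as the paper: reduce $\card{\cS_h}$ to a function $N(d,k)$ of the number of zero coordinates, compute exact closed forms in the small and middle ranges, and for large $k$ combine the monotonicity of $N(d,k)/2^d$ in $d$ with the restricted count $\sum_{i\le \floor{k/4}}\binom{k}{i}2^i$ and a Stirling/entropy estimate. The only differences are organizational: the paper computes exact formulas only for $k\le 7$ and $k\in\set{10,14}$ and disposes of the remaining $8\le k\le 168$ by checking the same lower bound numerically, while its analytic threshold ($k\ge 169$) is more conservative than your $k\ge 24$ — your claim there is true, but the appeal to Stirling should be supplemented by noting that the central-term ratio $\binom{k}{\floor{k/4}}2^{\floor{k/4}}/2^{k}$ is increasing along each residue class of $k$ modulo $4$ (it is not monotone in $k$ itself), so that checking the smallest case in each class suffices.
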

\begin{proof}
For $d \geq t$, denote by $f(t,d)$ the cardinality of $\cS_h$ where $h \in \set{0,1}^d$ consists of $t$ coordinates equal to $0$ and $d - t$ coordinates equal to $1$.
Note that, for every $d$, we have $f_0(d) = \min_{\text{ even }i} f(i,d)$ and $f_1(d)= \min_{\text{odd } i} f(i,d)$.
We also remark the expression of $f(i,d)$ for small values of $i$ (which are easily obtained by inspection):
\begin{align*}
    f(0,d) &= 2^d, &
    f(1,d) &= 2^{d-1}, &
    f(2,d) &= 2^{d-2}, &
    f(3,d) &= 2^{d-3}, \\
    f(4,d) &= 9 \cdot 2^{d-4}, &
    f(5,d) &= 11 \cdot 2^{d-5}, &
    f(6,d) &= 13 \cdot 2^{d-6}, &
    f(7,d) &= 15 \cdot 2^{d-7}.
\end{align*}

Using the values above, we obtain the values of $f_0(d)$ when $d \leq 7$ and $f_1(d)$ when $d \leq 8$.
Next, we determine the value of $f_0(d)$ for each $d \geq 8$.
Using the same method, we will then find $f_1(d)$ for each $d \geq 9$.
We start with the following claim.

\begin{claim}
\label{cl:bound-ftd}
For every $d$ and even $t$ satisfying $8 \leq t \leq d$ and $t \neq 10,14$, we have $f(t,d) > f(6,d)$.
\end{claim}
\begin{proof}
First, observe that $f(t,d)/2^d$ is non-decreasing in $d$.
This follow from the fact that by appending a coordinate $1$ to $h$, the number of solutions in $\cS_h$ at least doubles, as the new coordinate can be $\pm 1$.
Therefore, if we have $f(k,t) > f(6,t)$ for some positive integers $k \leq t$, then we also have $f(k,d) > f(6,d)$ for all $d \geq t$.
Indeed, we have
\begin{equation*}
    \frac{f(k,d)}{2^d} \geq \frac{f(k,t)}{2^t} > \frac{f(6,t)}{2^t} = \frac{f(6,t)2^{d-t}}{2^d} = \frac{f(6,d)}{2^d}.
\end{equation*}
Therefore, to prove our claim, it suffices to show that for every even $t \geq 8$, we have $f(t,t) > f(6,t)$.
By definition, $f(t,t)$ is the number of integer solutions to $x_1^2 + \dotsb + x_t^2 \leq t$ where each $x_i$ is even.
Counting only solutions where the variables $x_i$ have values in $\set{-2,0,2}$, we have
\begin{equation*}
    f(t,t) \geq F(t) \defined \sum_{i = 0}^{\floor{t/4}} 2^i \binom{t}{i}.
\end{equation*}
First, we show that if $t \geq 168$, then we have $F(t) > f(6,t)$.
We use the standard inequality that $\binom{n}{k} \geq 2^{n H(k/n)}/(n+1)$ (see for instance Cover and Thomas~\cite[Example 11.1.3]{Cover2005-xs}) where $H$ is the binary entropy function.
Furthermore, note that $1/4 + H(1/4) \geq 1.06$, so we have
\begin{align*}
    F(t) &\geq  2^{\floor{t/4}} \binom{4\floor{t/4}}{\floor{t/4}}
    \geq \frac{2^{\floor{t/4} + 4H(1/4)\floor{t/4}}}{4\floor{t/4}+1}
    \geq \frac{2^{(1/4 + H(1/4))(t - 4)}}{t+1}
    \geq \frac{2^{1.06t}}{2^5(t+1)},
\end{align*}
so it suffices to show that
\begin{align*}
    \frac{2^{1.06t}}{2^5(t+1)} > \frac{13 \cdot 2^t}{2^6} \quad \Leftrightarrow \quad 2^{0.06 t} > \frac{13(t+1)}{2},
\end{align*}
which holds for $t \geq 169$.
It can be easily checked numerically that $F(t) > f(6,t)$ holds for all $8 \leq t \leq 168$ such that $t \neq 10,14$.
Therefore, $f(t,t) \geq F(t) > f(6,t)$ for every even $t \geq 8$ such that $t \neq 10,14$, which finishes the proof.
\end{proof}

From \Cref{cl:bound-ftd}, it follows that for every $d \geq 14$, we have $f_0(d) = \min\set{f(6,d), f(10,d), f(14,d)}$, and for every $10 \leq d \leq 13$, we have $f_0(d) = \min\set{f(6,d), f(10,d)}$.
But note that
\begin{align*}
    f(10,d) = (191 + d)2^{d-10},\hspace{4mm } \text{and} \hspace{4mm }f(14,d) = (2899 + 29d)2^{d-14},
\end{align*}
so $f(10,d) \geq f(6,d)$ for $d \geq 17$ and $f(14,d) > f(6,d)$ for $d \geq 15$, which gives $f_0(d)=f(6,d)$ for $d \geq 17$.
By inspection in the range $10 \leq d \leq 16$, we find that $f_0(d)=f(10,d)$.
Finally, when $8 \leq d \leq 9$, we have $f_0(d) = f(6,d)$, and therefore we have derived the value of $f_0(d)$ for every $d$.

We now determine the value of $f_1(d)$ for each $d \geq 9$ via the same technique.

\begin{claim}
\label{cl:bound-ftd2}
For every $d$ and odd $t$ satisfying $9 \leq t \leq d$, we have $f(t,d) > f(7,d)$.
\end{claim}
\begin{proof}
As in the proof of \Cref{cl:bound-ftd}, it suffices to show that for every odd $t \geq 9$, we have $f(t,t) > f(7,t)$.
We have also proven in \Cref{cl:bound-ftd} that $f(t,t) \geq F(t) \defined \sum_{i = 0}^{\floor{t/4}}2^i \binom{t}{i}$ and that $F(t) > f(6,t)$ for every $t \geq 169$.
Since $f(6,t) > f(7,t)$ for all $t$, is is sufficient to show that $F(t) > f(7,t)$ for all $9 \leq t \leq 168$, which again can be easily checked numerically.
\end{proof}

From \Cref{cl:bound-ftd2}, it follows that for every $d \geq 9$, we have $f_1(d)=f(7,d)$. Therefore we have determined the value of $f_1(d)$ for every $d$.
\end{proof}

\end{document}